\documentclass[12pt,a4paper,leqno,twoside]{amsart}
\usepackage[a4paper,left=3cm,top=4cm,bottom=4cm,right=3cm]{geometry}
\usepackage[T1]{fontenc}
\usepackage{amssymb,bbm}
\usepackage{amsmath,amsthm,dsfont}
\usepackage{amsfonts,mathrsfs,wasysym}
\usepackage{latexsym}
\usepackage[cp1250]{inputenc}
\usepackage{graphicx}
\usepackage{indentfirst}
\usepackage{enumerate}
\usepackage{enumitem}
\usepackage{xparse}
\usepackage{etoolbox}
\usepackage{mathptmx}
\usepackage{fancyhdr}
\usepackage[dvipsnames]{xcolor}
\usepackage{marvosym}
\usepackage{pifont}
\usepackage{cite}

\usepackage{tikz}
\usepackage{scalerel}

\usepackage{graphics}
\usepackage{pstricks}
\usepackage{mdframed}

\usepackage{hyperref}
\hypersetup{colorlinks,linkcolor={blue},citecolor={red}}

\newcommand{\zn}{\scaleto{0}{1ex}\scaleto{n}{0.84ex}}
\newcommand{\mzn}{\scaleto{0}{0,8ex}\scaleto{n}{0.7ex}}

\patchcmd{\abstract}{\scshape\abstractname}{\textbf{\abstractname}}{}{}

\makeatletter
\DeclareMathAlphabet{\mathcal}{OMS}{cmsy}{m}{n}

\DeclareSymbolFont{operators}{OT1}{ztmcm}{m}{n}
\DeclareSymbolFont{letters}{OML}{ztmcm}{m}{it}
\DeclareSymbolFont{symbols}{OMS}{ztmcm}{m}{n}
\DeclareSymbolFont{largesymbols}{OMX}{ztmcm}{m}{n}
\DeclareSymbolFont{bold}{OT1}{ptm}{bx}{n}
\DeclareSymbolFont{italic}{OT1}{ptm}{m}{it}
\@ifundefined{mathbf}{}{\DeclareMathAlphabet{\mathbf}{OT1}{ptm}{bx}{n}}
\@ifundefined{mathit}{}{\DeclareMathAlphabet{\mathit}{OT1}{ptm}{m}{it}}
\DeclareMathSymbol{\omicron}{0}{operators}{`\o}

\DeclareMathAlphabet{\mathpzc}{OT1}{pzc}{m}{it}
\DeclareSymbolFont{operators}{OT1}{txr}{m}{n}
\SetSymbolFont{operators}{bold}{OT1}{txr}{bx}{n}
\def\operator@font{\mathgroup\symoperators}
\DeclareSymbolFont{italic}{OT1}{txr}{m}{it}
\SetSymbolFont{italic}{bold}{OT1}{txr}{bx}{it}
\DeclareSymbolFontAlphabet{\mathrm}{operators}
\DeclareMathAlphabet{\mathbf}{OT1}{txr}{bx}{n}
\DeclareMathAlphabet{\mathit}{OT1}{txr}{m}{it}
\SetMathAlphabet{\mathit}{bold}{OT1}{txr}{bx}{it}
\DeclareSymbolFont{letters}{OML}{txmi}{m}{it}
\SetSymbolFont{letters}{bold}{OML}{txmi}{bx}{it}
\DeclareFontSubstitution{OML}{txmi}{m}{it}
\DeclareSymbolFont{lettersA}{U}{txmia}{m}{it}
\SetSymbolFont{lettersA}{bold}{U}{txmia}{bx}{it}
\DeclareFontSubstitution{U}{txmia}{m}{it}
\DeclareSymbolFontAlphabet{\mathfrak}{lettersA}
\DeclareSymbolFont{symbols}{OMS}{txsy}{m}{n}
\SetSymbolFont{symbols}{bold}{OMS}{txsy}{bx}{n}
\DeclareFontSubstitution{OMS}{txsy}{m}{n}
\makeatother

\newcommand{\adm}[2]{%
    \mathrel{%
        \text{%
            \begin{tikzpicture}[baseline=0ex]
                \draw[line width=0.9pt] (0,#1) -- (0,#2); 
            \end{tikzpicture}%
        }%
    }%
}

\makeatletter
\renewcommand\abstractname{\scshape\bfseries Abstract}

\renewenvironment{proof}[1][\proofname]{\par \pushQED{\qed} \normalfont
  \topsep6\p@\@plus6\p@ \trivlist \itemindent\z@
  \item[\hskip\labelsep\bfseries
    #1\@addpunct{.}]\ignorespaces
}{
  \popQED\endtrivlist\@endpefalse
}

\makeatother

\makeatletter
    \@addtoreset{equation}{section} 
    \renewcommand{\theequation}{{\thesection}.\@arabic\c@equation} 

\makeatother

\makeatletter

\def\section{\@ifstar\unnumberedsection\numberedsection}
\def\numberedsection{\@ifnextchar[
  \numberedsectionwithtwoarguments\numberedsectionwithoneargument}
\def\unnumberedsection{\@ifnextchar[
  \unnumberedsectionwithtwoarguments\unnumberedsectionwithoneargument}
\def\numberedsectionwithoneargument#1{\numberedsectionwithtwoarguments[#1]{#1}}
\def\unnumberedsectionwithoneargument#1{\unnumberedsectionwithtwoarguments[#1]{#1}}
\def\numberedsectionwithtwoarguments[#1]#2{%
  \ifhmode\par\fi
  \removelastskip
  \vskip 4ex\goodbreak
  \refstepcounter{section}%
  \noindent
  \begingroup
  \leavevmode\centering\scshape\bfseries
  \thesection.
  #2
  \par
  \endgroup
  \vskip 1ex\nobreak
  \addcontentsline{toc}{section}{%
    \protect\numberline{\thesection}%
    #1}%
  }
\def\unnumberedsectionwithtwoarguments[#1]#2{%
  \ifhmode\par\fi
  \removelastskip
  \vskip 2ex\goodbreak
  \noindent
  \begingroup
  \leavevmode\centering\scshape\bfseries
  \leavevmode\centering\scshape\bfseries
  #2
  \par
  \endgroup
  \vskip 1ex\nobreak
  \addcontentsline{toc}{section}{%
    #1}%
}
\makeatother

\makeatletter
\def\@seccntformat#1{\csname mythe#1\endcsname}

\let\latex@subsection\subsection
\def\subsection{\@ifstar{\refstepcounter{subsection}\latex@subsection*}{\latex@subsection}}
\def\@makechapterhead#1{%
  \vspace*{40\p@}%
  {\parindent \z@ \raggedright \normalfont
    \interlinepenalty\@M
    \Huge \bfseries #1\par \nobreak
    \vskip 40\p@
  }}
\let\latex@l@chapter\l@chapter
\def\l@chapter#1#2{\begingroup\let\numberline\@gobble\latex@l@chapter{#1}{#2}\endgroup}
\makeatother

\theoremstyle{plain}
\newtheorem{Th}{Theorem}[section]
\newtheorem{Prop}[Th]{Proposition}
\newtheorem{Lem}[Th]{Lemma}
\newtheorem{Cor}[Th]{Corollary}
\theoremstyle{definition}
\newtheorem{Rem}[Th]{Remark}
\newtheorem{Ex}[Th]{Example}

\newtheorem{Def}[Th]{Definition}

\def\bf{\textbf}
\def\it{\textit}
\def\tn{\textnormal}
\def\leq{\leqslant}
\def\geq{\geqslant}
\def\R{{\mathds R}}
\def\N{{\mathds N}}
\def\B{\textit{I\!B}}

\def\D{{\mathrm{dom}}\,}
\def\E{{\mathrm{epi}}\,}
\def\G{{\mathrm{gph}}\,}

\def\dist{\mathrm{dist}}

\begin{document}
\vspace*{-4mm}
\title{Stable representations of Hamilton-Jacobi-Bellman equations with infinite horizon}

\author{\vspace*{-0.2cm}{Arkadiusz Misztela$^{\tn{\textdagger}}$ and S\l{}awomir Plaskacz $^{\tn{\textdaggerdbl}}$}\vspace*{-0.2cm}}
\thanks{\textdagger\,Institute of Mathematics, University of Szczecin, Wielkopolska 15, 70-451 Szczecin, Poland; e-mail: arkadiusz.misztela@usz.edu.pl ({\color{Green}Corresponding author})}
\thanks{\textdaggerdbl\,Nicholas Copernicus University, Faculty of Economic Sciences and Management, Gagarina 13A,\linebreak 87-100 Toruń, Poland; e-mail: plaskacz@mat.umk.pl}

\begin{abstract}
In this paper, for the Hamilton-Jacobi-Bellman equation with an infinite horizon and state constraints, we construct a suitably regular representation. This allows\linebreak us to reduce the problem of existence and uniqueness of solutions to the Frankowska and Basco theorem from \cite{B-F-2019}. Furthermore, we demonstrate that our representations are stable. The obtained results are illustrated with examples.\\
\vspace{0mm}

\hspace{-1cm}
\noindent  \bf{\scshape Keywords.} Hamilton-Jacobi-Bellman equations, infinite horizon, value function,  \\\hspace*{-0.55cm} state constraints, stability of solutions, representation of Hamiltonians.

\vspace{3mm}\hspace{-1cm}
\noindent \bf{\scshape Mathematics Subject Classification.} 34A60, 49J15, 49L25, 70H20.
\end{abstract}

\maketitle

\pagestyle{myheadings}  \markboth{\small{\scshape Arkadiusz Misztela and Sławomir Plaskacz}
}{\small{\scshape Hamilton-Jacobi-Bellman Equations}}

\thispagestyle{empty}

\vspace{-0.8cm}


\section{Introduction}

\noindent Value functions for optimal control problems given by a dynamics $f(t,x,u)$, a running cost function $l(t,x,u)$, and a set of controls $U(t)$ are known to be  a weak solution to a corresponding Hamilton-Jacobi-Bellman equation
\begin{equation}\label{eqhjb_0}
-W_t+H(t,x,-W_x)=0,
\end{equation}
where the Hamiltonian $H(t,x,p)$ is given by
\begin{equation}\label{def-rh0}
H(t,x,p)=\sup\nolimits_{u\,\in\,U(t)}\,\{\,\langle\, p,f(t,x,u)\,\rangle-l(t,x,u)\,\}.
\end{equation}

\noindent The uniqueness of viscosity solutions to \eqref{eqhjb_0} has been obtained by Crandall, Evans, and Lions in  \cite{Lions1, Lions2}. The method of comparison between viscosity sub- and supersolutions used in that pioneering works can be used for a wide class of Hamiltonians not necessarily\linebreak related to a control system, in particular for Hamiltonians $H(t,x,p)$  that are not convex\linebreak with respect to the last variable.
H. Frankowska in \cite{HF} obtained the existence and\linebreak uniqueness of weak solutions of the Hamilton-Jacobi-Bellman equations corresponding to a Mayer problem by showing that the value function is the unique weak solution. This\linebreak method has been adopted in  \cite{B-F-2019-l,B-F-2019,F-P-200}. In the cited papers, the assumptions are not\linebreak formulated directly as the regularity and boundedness properties of the Hamiltonian.  The assumptions are expressed as some properties of the dynamics $f(t,x,u)$ and the running\linebreak cost $l(t,x,u)$. Hamilton-Jacobi equations appear in problems where the Hamiltonian\linebreak $H(t,x,p)$ is not related to a control problem (comp. \cite{Cieslak}), but nevertheless is convex with respect to the last variable. This type of problem and pure cognitive curiosity motivated\linebreak  authors to construct a representation $(U,f,l)$ of a given Hamiltonian $H(t,x,p)$. The\linebreak triple $(U,f,l)$ is a representation of the Hamiltonian $H(t,x,p)$ if equality \eqref{def-rh0} holds true.\linebreak The problem of finding an appropriate representation of the given Hamiltonian has been\linebreak considered in \cite{FR,AM2,AM3,F-S,AM0,AM,AM1,AMaX2,B-F-2020,B-F-2022}.

In the paper, we consider the state constrained Hamilton-Jacobi-Bellman (H-J-B)\linebreak equation with infinite horizon terminal condition, described as follows:
\begin{equation}\label{eqhjb}
\left\{\begin{array}{l}
-W_{t}+ H(t,x,-W_{x})=0\;\;\; \tn{in}\;\;\; (0,\infty)\times A, \\[1mm]
\lim\nolimits_{\,t\to\infty}\sup\nolimits_{\,x\,\in\,\D W(t,\,\cdot\,)}|W(t,x)|=0,
\end{array}\right.
\end{equation}
in which $H:[0,\infty)\times \R^{\scriptscriptstyle N}\times\R^{\scriptscriptstyle N}\to \R$ is a given Hamiltonian that is convex with respect to the last variable, $A\subset \R^{\scriptscriptstyle M}$ is a given closed set and $W:[0,\infty)\times A\to\R\cup\{+\infty\}$ is an unknown weak solution.
The problem \eqref{eqhjb} is related to an infinite horizon optimal control problems with state constraints $A$ given by a triple $(U,f,l)$, where $U:[0,\infty)\to \R^{\scriptscriptstyle M}$ is a set-valued map with nonempty values of controls, $f:[0,\infty)\times\R^{\scriptscriptstyle N}\times\R^{\scriptscriptstyle M}\to\R^{\scriptscriptstyle N}$ is a dynamics of a control system and $l:[0,\infty)\times\R^{\scriptscriptstyle N}\times\R^{\scriptscriptstyle M}\to\R$ is a running cost function.
The value function $\mathcal{V}:[0,\infty)\times A\to\R\cup\{+\infty\}$ of the optimal control problem is defined by
\begin{equation}\label{def-vf2}
\mathcal{V}(t_0,x_0)=\inf_{(x,u)(\cdot)\,\in\, S_{\!\!f\,}(t_0,x_0)}\int_{t_0}^{\infty}l(t,x(t),u(t))\,dt,
\end{equation}
where $S_{\!\!f\,}(t_0,x_0)$ denotes the set of all trajectory-control pairs of the control system
\begin{equation}\label{def-vf2-svs}
\left\{\begin{array}{l}
\dot{x}(t)=f(t,x(t),u(t)),\;\;u(t)\in U(t),\;\;\tn{a.e.}
\;\;t\in[t_0,\infty), \\[1mm]
x(t_0)=x_0,\quad x([t_0,\infty))\subset A.
\end{array}\right.
\end{equation}
 V. Basco and H. Frankowska in \cite{B-F-2019} give a list of assumptions onto the control system $(U,f,l)$ and obtain that the value function $\mathcal{V}(t,x)$ is the unique weak solution to \eqref{eqhjb}.

In this paper, attention focuses on hypotheses on the Hamiltonian $H(t,x,p)$ that\linebreak allows to construct a control system $(U,f,l)$ being its representation.
Using the methods of constructing an epigraphical representation introduced by A. Misztela in \cite{AM,AM1} we\linebreak reformulate the results of Basco-Frankowska. We provide $\tn{(OPC)}_{\!H}\!$ and a list (denoted $\!\tn{(h)}_{\!H}''$)\linebreak  of properties of the Hamiltonian $H(t,x,p)$ and its Fenchel conjugate $ H^{\ast}(t,x,v)$ that are\linebreak sufficient to construct a representation  $(\B,f,l)$ of the Hamiltonian $H(t,x,p)$ satisfying the\linebreak assumption of Theorem 3.3 in \cite{B-F-2019} ($\B$ denotes the closed unit ball in $\R^{\scriptscriptstyle N+1}$). For the reader's convenience, we adopt the notations from \cite{B-F-2019}, where the list of properties of the triple\linebreak $(U,f,l)$, which are the assumptions of Theorem 3.3, is denoted by $\tn{(h)}''$ and (OPC)\linebreak (we recall it in Section \ref{sec-rch}). Moreover, we show that the Hamiltonian obtained by \eqref{def-rh0}\linebreak from a triple $(U,f,l)$ satisfying the assumptions of Theorem 3.3 in \cite{B-F-2019} meets the full list $\tn{(h)}_H''$ and $\tn{(OPC)}_{H}$. Using our  representation of the Hamiltonian, we obtain the existence and uniqueness results for the problem \eqref{eqhjb} when all  assumptions are formulated as the properties of the Hamiltonian. The existence result for the problem \eqref{eqhjb} additionally requires the condition $\tn{(B)}_H$, which is a weaker version of the condition (B) from  \cite{B-F-2019}.

To obtain stability for  \eqref{eqhjb} we restrict the class of Hamiltonians and we additionally assume that the dynamics given by the domain of the Fenchel conjugate $H^*(t,x,\cdot)$ is forward viable and backward invariant to the constraints set $A$. This restricted class contains in particular  Hamiltonians related to a control triple $(U,f,e^{-\gamma t}l)$, where $f,l$ are bounded.

The outline of this paper is as follows. In Section \ref{sec-prel} we provide some notations. In Section \ref{sec-rch} we obtain a representation of Hamiltonian that in the next section allows to obtain the existence and uniqueness of weak solutions to \eqref{eqhjb}. In the last section we discuss the stability of weak solutions to \eqref{eqhjb} when Hamiltonians $H_n$ converge to $H$.

\section{Preliminaries}\label{sec-prel}

\noindent We denote by $|\cdot|$ and $\langle \cdot,\cdot\rangle$ the Euclidean norm and scalar product in $\R^{\scriptscriptstyle N}$, respectively. For a nonempty subset $C\subset\R^{\scriptscriptstyle N}$ we denote the interior of $C$ by $\mathrm{int}\,C$, the closure $\mathrm{cl}\,C$, the boundary of $C$ by $\mathrm{bd}\,C$, the convex hull of $C$ by $\mathrm{con}\,C$, the norm of $C$ by $\|C\|=\sup_{x\in C}|x|$, and the distance from  $x\in\R^{\scriptscriptstyle N}$ to $C$ by $\dist(x,C)=\inf_{y\in C}|\,x-y\,|$. Let $\B(x,r)$ stand for the closed ball in $\R^{\scriptscriptstyle N}$ with radius $r\geq 0$ centered at $x\in\R^{\scriptscriptstyle N}$ and $\B:=\B(0,1)$, $\mathds{S}:=\mathrm{bd}\,\B$. The extended Hausdorff distance between nonempty subsets $C$, $D$ of $\R^{\scriptscriptstyle N}$ is defined by
\begin{equation*}
\textit{d\!l}_\mathcal{H}(C,D)= \max\big\{\,\sup\nolimits_{x\in C}\dist(x,D),\;\sup\nolimits_{x\in D}\dist(x,C)\,\big\}\in\R\cup\{+\infty\}.
\end{equation*}

Let $I$ and $J$ be two intervals in $\R$. We denote by $L^1(I;J)$ the set
of all $J$-valued Lebesgue\linebreak integrable functions on $I$. We say that $\varphi\in L^1_{\mathrm{loc}}(I;J)$ if $\varphi\in L^1_{\mathrm{loc}}(\textit{I\!I};J)$ for any compact sub-interval $\textit{I\!I}\subset I$. In what follows $\mu$ stands for the Lebesgue measure on $\R$. Here  $\mathscr{L}_{\mathrm{loc}}$ denotes the set of all functions $\varphi\in L^1_{\mathrm{loc}}\!\big([0,\infty);[0,\infty)\big)$  such that $\lim_{\sigma\to 0}\theta_\varphi(\sigma)=0$, where
$$\theta_\varphi(\sigma)=\sup\big\{{\textstyle\int_{\textit{I\!I}}}\;\varphi(\tau)\;d\tau\,\adm{1.95ex}{-0.83ex}\,  \textit{I\!I}\;\tn{is a compact sub-interval of}\; [0,\infty)\;\tn{with}\;\mu(\textit{I\!I})\leq\sigma\big\}.$$
Notice that $L^\infty([0,\infty);[0,\infty))\subset\mathscr{L}_{\mathrm{loc}}$.  We denote the norm in $L^\infty\big([0,\infty);\R^{\scriptscriptstyle N}\big)$ by $\|\cdot\|_{\infty}$.

The set $\G S= \{\,(x,y)\mid y\in S(x)\,\}$ is called a \it{graph} of the set-valued map $S:\R^{\scriptscriptstyle M}\rightsquigarrow\R^{\scriptscriptstyle N}$. A set-valued map $S:I\rightsquigarrow\R^{\scriptscriptstyle N}$ is \it{measurable} if for each open set $O\subset\R^{\scriptscriptstyle N}$ the inverse image  $S^{-1}(O)= \{\,t\in I\mid S(t)\cap O\not=\emptyset\,\}$  is Lebesgue measurable set.  A set-valued map $S:\R^{\scriptscriptstyle M}\rightsquigarrow\R^{\scriptscriptstyle N}$ is \it{lower semicontinuous} in  Kuratowski's sense if for each open set $O\subset\R^{\scriptscriptstyle N}$ the set $S^{-1}(O)$ is open. It is equivalent to  $\forall\,(x,y)\in\G S\;\forall\,x_n\to x\;\exists\,y_n\to y\,:\,y_n\in S\!(x_n)$ for large $n\in\N$.\linebreak  A set-valued map $S:\R^{\scriptscriptstyle M}\rightsquigarrow\R^{\scriptscriptstyle N}$ taking nonempty and compact values, is considered\linebreak continuous or Lipschitz continuous if it maintains these properties under the Hausdorff metric evaluation.

 Let $\overline{\R}=\R\cup\{\pm\infty\}$ and $\varphi:\R^{\scriptscriptstyle N}\to\overline{\R}$. The sets $\D\varphi=\{\,x\in\R^{\scriptscriptstyle N}\mid\varphi(x)\not=\pm\infty\,\}$, $\G\varphi=\{\,(x,r)\in\R^{\scriptscriptstyle N}\times\R\mid\varphi(x)=r\,\}$, and $\E\varphi=\{\,(x,r)\in\R^{\scriptscriptstyle N}\times\R\mid\varphi(x)\leq r\,\}$ are called the \emph{effective domain}, the \it{graph} and the \it{epigraph} of $\varphi$, respectively. We say that $\varphi$ is \it{proper} if it never takes  the value $-\infty$ and it is not identically equal to $+\infty$.
 The \it{subdifferential} of the function $\varphi:\R^{\scriptscriptstyle N}\to\overline{\R}$ at the point $x\in\D\varphi$ is the possibly empty set defined by
\begin{equation*}
\partial\varphi(x)=\Big\{p\in\R^{\scriptscriptstyle N}\,\adm{2.56ex}{-1.47ex}\,\liminf_{y\to x}\frac{\varphi(y)-\varphi(x)-\langle p,y-x\rangle}{|y-x|}\geq 0\Big\}.
\end{equation*}
The \it{tangent cone} to the subset $C$ of $\R^{\scriptscriptstyle N}$ at the point $x\in C$ is defined by
\begin{equation*}
T_{C}(x)=\Big\{\,\zeta\in\R^{\scriptscriptstyle N}\,\adm{2.56ex}{-1.47ex}\,\liminf_{\tau\to 0+}\,\frac{\dist(x+\tau \zeta,C)}{\tau}=0 \,\Big\}.
\end{equation*}
The \it{regular normal cone} to the subset $C$ of $\R^{\scriptscriptstyle N}$ at the point $x\in C$ can be defined as
\begin{equation*}
N_C(x)=\big\{\,\xi\in\R^{\scriptscriptstyle N}\,\adm{1.95ex}{-0.83ex}\,\langle \zeta,\xi\rangle\leq 0\;\tn{for all}\;\zeta\in T_{C}(x)\,\big\}.
\end{equation*}
It follows from \cite[Thm. 8.9]{R-W} that $p\in\partial\varphi(x)$ if and only if $(p,-1)\in N_{\E\varphi}(x,\varphi(x))$.\\ By the definition of the regular normal cone for all $(p,q)\in N_{\E\varphi}(x,\varphi(x))$ one has $q\leq 0$.

\noindent The \it{limiting normal cone} to the subset $C$ of $\R^{\scriptscriptstyle N}$ at the point $x\in C$ is defined by
\begin{equation*}
\textit{I\!N}_C(x)=\left\{\,\xi\in\R^{\scriptscriptstyle N}\,\adm{2.5ex}{-1.41ex} \,\exists\;x_n\xrightarrow{\raisebox{-2.2ex}[0pt][0pt]{$\scriptscriptstyle\;C\;$}}x,\;\xi_n\to\xi\;\;\tn{with}\;\;\xi_n\in N_C(x_n)\,\right\}\!,
\end{equation*}
where $x_n\xrightarrow{\raisebox{-2.2ex}[0pt][0pt]{$\scriptscriptstyle\;C\;$}}x$ denotes the convergence in $C$.

A set-valued map $P:I\rightsquigarrow\R^{\scriptscriptstyle M}$ is \it{locally absolutely continuous} if it takes nonempty closed images and for any $[S,T]\subset[0,\infty)$, every $\varepsilon>0$, and any
compact $K\subset\R^{\scriptscriptstyle M}$ there exists $\delta>0$ such that for any finite partition $S\leq s_1<t_1\leq s_2<t_2\leq\cdots\leq s_m<t_m\leq T$
$$\sum\nolimits_{i=1}^{m}(t_i-s_i)<\delta \;\Longrightarrow\; \sum\nolimits_{i=1}^md_K(P(t_i),P(s_i))<\varepsilon,$$ where $d_K(U,D)=\inf\{\,\varepsilon\geq 0\,\mid\,U\cap K\subset D+\varepsilon\B,\, D\cap K\subset U+\varepsilon\B\,\}$ for nonempty $U,D\subset\R^{\scriptscriptstyle M}$.

A sequence of functions $\varphi_n:\R^{\scriptscriptstyle N}\to\overline{\R}$, is said to \it{lower epi-converge} to  function\linebreak $\varphi:\R^{\scriptscriptstyle N}\to\overline{\R}$ (e-$\liminf_{n\to\infty}\varphi_n=\varphi$ for short) if, for every point $x\in\R^n$,
\begin{enumerate}[leftmargin=7mm]
\item[(i)] $\liminf_{n\to\infty}\varphi_n(x_n)\geq\varphi(x)$ for every sequence $x_n\to x$,
\item[(ii)] $\limsup_{i\to\infty}\varphi_{n_i}(x_i)\leq\varphi(x)$ for some sequence $x_i\!\to\! x$ and some increasing sequence~$\{n_i\}$.
\end{enumerate}

\vspace{-4mm}

\section{Representation of Convex Hamiltonian}\label{sec-rch}

\noindent Let $H(t,x,\cdot)$ be a real-valued and convex function defined on $\R^{\scriptscriptstyle N}\!\!\,$  for $t\in[0,\infty)$ and $x\in \R^{\scriptscriptstyle N}\!\!$. By $H^{\ast}(t,x,\cdot)$ we denote the Legendre-Fenchel conjugate of $H(t,x,\cdot)$  with respect to the last variable (in our case $H^{\ast}(t,x,\cdot)$ is an extended real-valued function):
\begin{equation}\label{tlf-1}
H^{\ast}(t,x,v):= \sup\nolimits_{p\,\in\,\R^{\scriptscriptstyle N}}\,\{\,\langle v,p\rangle-H(t,x,p)\,\}.
\end{equation}
 By  a standard property of the Legendre-Fenchel tranform (see \!\cite[Thm.\! 11.1]{R-W}) we obtain
\begin{equation}\label{tlf-2}
H(t,x,p)= \sup\nolimits_{p\,\in\,\R^{\scriptscriptstyle N}}\,\{\,\langle p,v\rangle-H^{\ast}(t,x,v)\,\}.
\end{equation}
A triple $(U,f,l)$ is a \it{representation} of  $H$ if \eqref{def-rh0} holds true for $p\in \R^{\scriptscriptstyle N}$ , where $U=U(t)$ is a nonempty  subset of $\R^{\scriptscriptstyle M}$, $f=f(t,x,\cdot)$ is a function defined on $\R^{\scriptscriptstyle M}$ with values in $\R^{\scriptscriptstyle N}$ and $l=l(t,x,\cdot)$ is a real-valued function  defined on $\R^{\scriptscriptstyle M}$. \\
If $(U,f,l)$ is a representation of  $H$ then by \cite[Prop. 4.1]{AM} we obtain
\begin{equation}\label{rep-inc}
(f,l)(t,x,U(t))\subset\E H^{\ast}(t,x,\cdot),
\end{equation}
where $(f,l)(t,x,u):=(f(t,x,u),l(t,x,u))$. In general, the reverse  implication does not hold. However, if we have
\begin{equation}\label{def-grh}
\G H^{\ast}(t,x,\cdot)\subset(f,l)(t,x,U(t))\subset\E H^{\ast}(t,x,\cdot),
\end{equation}
then, in view of \cite[Prop. 5.7]{AM}, the triple $(U,f,l)$ is a representation of  $H$. The triple $(U,f,l)$ satisfying \eqref{def-grh} is called an \it{epigraphical representation} of  $H$.  The triple $(U,f,l)$ is called a \it{graphical representation of} $H$ if $\G H^{\ast}(t,x,\cdot)=(f,l)(t,x,U(t))$.

The Hamilton-Jacobi equations considered in \cite{B-F-2019} was related to the control problem given by the triple $(U,f,l)$ and a constraints set $A$, where the set $A\subset\R^{\scriptscriptstyle N}$ is nonempty and closed.
 In  \cite{B-F-2019} the assumptions on $(U,f,l)$ and $A$ was denoted by  $\tn{(h)}''$ and  $\tn{(OPC)}$, where
\begin{enumerate}[leftmargin=15mm]
\item[$\pmb{\tn{(h)}'':}$]
A set-valued map $U:[0,\infty)\rightsquigarrow\R^{\scriptscriptstyle M}$  is  measurable  with nonempty closed  images. Let $f:[0,\infty)\times\R^{\scriptscriptstyle N}\times\R^{\scriptscriptstyle M}\to\R^{\scriptscriptstyle N}$ and $l:[0,\infty)\times\R^{\scriptscriptstyle N}\times\R^{\scriptscriptstyle M}\to\R$ be such that
\begin{enumerate}[leftmargin=9mm]
\item[\tn{\bf{(h1)}}]  $f(t,x,u)$ and $l(t,x,u)$ are Lebesgue measurable in $\,t\,$ for all $(x,u)\in\R^{\scriptscriptstyle N}\times\R^{\scriptscriptstyle M}$ and continuous in $(x,u)$ for all $t\in[0,\infty)$ and there exists $\phi(\cdot)\in L^1([0,\infty);\R)$ such that $l(t,x,u)\geq \phi(t)\,$ for  all $\,t\in[0,\infty)$, $x\in\R^{\scriptscriptstyle N}$, $u\in\R^{\scriptscriptstyle M}$\tn{;}
\item[\tn{\bf{(h2)}}] $|f(t,x,u)|+|l(t,x,u)|\,\leq\, c(t)(1+|x|)\;$ for  all $\;t\in[0,\infty)$, $x\in\R^{\scriptscriptstyle N}$, $u\in U(t)$, and\linebreak some  function $c(\cdot)\in L^1_{\mathrm{loc}}\!\big([0,\infty);[0,\infty)\big)$\tn{;}
\item[\tn{\bf{(h3)}}] The set-valued map $x\rightsquigarrow (f,l)(t,x,U(t))\,$ is continuous with closed images for all $t\in[0,\infty)$\tn{;}
\item[\tn{\bf{(h4)}}]  $\forall\,t\in[0,\infty),\,x\in\R^{\scriptscriptstyle N}$ the set  $\{(f(t,x,u),l(t,x,u)+r)\mid u\in U(t),\,r\geq 0\}$ is convex\tn{;}
\item[\tn{\bf{(h5)}}] $|f(t,x,u)|+|l(t,x,u)|\leq q(t)\,$ for  all $\,t\in[0,\infty)$, $x\in\mathrm{bd}\,A$, $u\in U(t)$, and some\\  function $q(\cdot)\in \mathscr{L}_{\tn{loc}}$, where $A$ is a given nonempty, closed subset of $\R^{\scriptscriptstyle N}$\tn{;}
\item[\tn{\bf{(h6)}}] $|f(t,x,u)-f(t,y,u)|+|l(t,x,u)-l(t,y,u)|\leq k(t)\,|x-y|\,$ for all\\ $t\in[0,\infty)$, $x,y\in\R^{\scriptscriptstyle N}$, $u\in U(t)$, and some function $k(\cdot)\in \mathscr{L}_{\tn{loc}}$.
\end{enumerate}
\item[\tn{\bf{(OPC):}}]There exist $\eta>0$, $r>0$, $M\geq 0$ such that for almost all $t\in[0,\infty)$ and any\linebreak $y\in\mathrm{bd}\,A+\eta\B$, and any $v\in f(t,y,U(t))$ with $\inf_{n\in \textit{I\!N}^1_{y,\eta}}\langle n,v\rangle\leq 0$, we can find\linebreak $w\in f(t,y,U(t))\cap\B(v,M)$ satisfying
$$\inf\nolimits_{\,n\in\textit{I\!N}^1_{y,\eta}}\min\big\{\,\langle n,w\rangle,\;\langle n,w-v\rangle\,\big\}\geq r,$$
where $\textit{I\!N}^1_{y,\eta}:=\big\{n\in\mathds{S}\,\adm{1.95ex}{-0.83ex}\,n\in\mathrm{cl}\,\mathrm{con}\,\textit{I\!N}_A(x),\,x\in\mathrm{bd}\,A\cap\B(y,\eta)\big\}$.
\end{enumerate}
We provide properties of the Hamiltonian corresponding to the triple $(U,f,l)$ satisfying
$\tn{(h)}''$ and  $\tn{(OPC)}$. The properties of the corresponding Hamiltonian are denoted by
\begin{enumerate}[leftmargin=17mm]
\item[$\pmb{\tn{(h)}_H'':}$] Let the Hamiltonian $H:[0,\infty)\times\R^{\scriptscriptstyle N}\times\R^{\scriptscriptstyle N}\to\R$ be such that
\begin{enumerate}[leftmargin=9.4mm]
\item[\bf{(H1)}] $H(t,x,p)$ is  measurable with respect to the first variable, continuous with respect to the second variable and convex with respect to the third variable.
\item[\bf{(H2)}] $H(t,x,0)\!\leq\!-\phi(t)$ for all $t\!\in\![0,\!\infty)$, $\!x\!\in\!\R^{\scriptscriptstyle N}$ and some function $\phi\!\in\! L^1\!([0,\!\infty);\R)$.
\item[]\hspace{-1cm} There exists a function $\lambda:[0,\infty)\times\R^{\scriptscriptstyle N}\to[0,\infty)$ that is measurable with respect \item[]\hspace{-1cm} to the first variable and continuous with respect to the second variable and there
\item[]\hspace{-1cm}  exist $c(\cdot)\in L^1_{\mathrm{loc}}\!\big([0,\infty);[0,\infty)\big)$  and $k(\cdot),q(\cdot)\in \mathscr{L}_{\tn{loc}}$ such that
\item[\bf{(H3)}] $\forall\,x\in\R^{\scriptscriptstyle N}\;\lambda(t,x)\leq c(t)(1+|x|)$ and $\forall\,x\in\mathrm{bd}\,A\;\lambda(t,x)\leq q(t)$ for all $t\in[0,\infty)$;
\item[\bf{(H4)}] $|\lambda(t,x)-\lambda(t,y)|\leq k(t)|x-y|$ for all $t\in[0,\infty)$,  $x,y\in\R^{\scriptscriptstyle N}$;
\item[\bf{(H5)}] $|H(t,x,p)-H(t,y,p)|\leq k(t)(1+|p|)|x-y|$ for all $t\in[0,\infty)$,  $x,y,p\in\R^{\scriptscriptstyle N}$;
\item[\bf{(H6)}] $|H(t,x,p)-H(t,x,q)|\leq\lambda(t,x)|p-q|$ for all $t\in[0,\infty)$, $x,p,q\in\R^{\scriptscriptstyle N}$;
\item[\bf{(H7)}] $|H^{\ast}(t,x,v)|\leq \lambda(t,x)$ for all  $v\in\D H^{\ast}(t,x,\cdot)$, $x\in\R^{\scriptscriptstyle N}$, $t\in[0,\infty)$;\vspace{1mm}
\end{enumerate}
\item[$\pmb{\tn{(OPC)}_H\!:}$] There exist $\eta>0$, $r>0$, $M\geq 0$ such that for almost all $t\in[0,\infty)$ and any $y\in\mathrm{bd}\,A+\eta\B$, and any $v\in\D H^{\ast}(t,y,\cdot)$ with $\inf_{n\in \textit{I\!N}^1_{y,\eta}}\langle n,v\rangle\leq 0$, we can find $w\in\D H^{\ast}(t,y,\cdot)\cap\B(v,M)$ satisfying
$$\inf\nolimits_{\,n\in\textit{I\!N}^1_{y,\eta}}\min\big\{\,\langle n,w\rangle,\;\langle n,w-v\rangle\,\big\}\geq r,$$
where $\textit{I\!N}^1_{y,\eta}:=\big\{n\in\mathds{S}\,\adm{1.95ex}{-0.83ex}\,n\in\mathrm{cl}\,\mathrm{con}\,\textit{I\!N}_A(x),\,x\in\mathrm{bd}\,A\cap\B(y,\eta)\big\}$.
\end{enumerate}

\begin{Th}\label{rwwar-i}
If  the control system given by a triple $(U,f,l)$ satisfies  $\tn{(h)}''$, then the\linebreak corresponding Hamiltonian $H(t,x,p)$ given by \eqref{def-rh0} meets the criteria  $\tn{(h)}_H''$. Moreover, if $\tn{(OPC)}$ is satisfied, then $\tn{(OPC)}_H$ also holds. Additionally, $(U,f,l)$ is an epigraphical representation of $H$, i.e \eqref{def-grh} holds for all $t\in[0,\infty)$, $x\in\R^{\scriptscriptstyle N}$.
\end{Th}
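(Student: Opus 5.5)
Set $E(t,x):=\{(f(t,x,u),l(t,x,u)+r)\mid u\in U(t),\,r\geq 0\}$. The plan is to establish, in this order, the identity $E(t,x)=\E H^{\ast}(t,x,\cdot)$, the epigraphical inclusion \eqref{def-grh}, the estimates (H1)--(H7), and finally $\tn{(OPC)}_H$.

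\emph{Step 1: identify $E(t,x)$ with $\E H^{\ast}(t,x,\cdot)$.} By (h4) the set $E(t,x)$ is convex. By (h2) the set $(f,l)(t,x,U(t))$ is bounded, and by (h3) it is closed, hence compact; therefore $E(t,x)$ is closed. Its support function in the direction $(p,-1)$ equals $\sup_u\{\langle p,f\rangle-l\}=H(t,x,p)$. Since $E(t,x)$ is a closed convex set stable under adding $\{0\}\times[0,\infty)$, it is the epigraph of a closed convex function $g$ with bounded domain, and $g^{\ast}=H(t,x,\cdot)$. Fenchel duality then gives $g=H^{\ast}(t,x,\cdot)$, so $E(t,x)=\E H^{\ast}(t,x,\cdot)$.

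\emph{Step 2: the representation inclusion \eqref{def-grh}.} The right inclusion in \eqref{def-grh} is immediate from Step 1. For the left inclusion, take $(v,H^{\ast}(v))$. By Step 1 it equals $(f(u),l(u)+r)$ for some $u\in U(t)$ and $r\geq 0$. If $r>0$, then $(v,l(u))\in E(t,x)$ would give $H^{\ast}(v)\leq l(u)<H^{\ast}(v)$, a contradiction. Hence $r=0$.

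\emph{Step 3: the properties (H1)--(H7).} Define $\lambda(t,x):=\max_{u\in U(t)}\big(|f(t,x,u)|+|l(t,x,u)|\big)$. With this choice, (H3) follows from (h2) and (h5), and (H4) from (h6), since a supremum of $k(t)$-Lipschitz functions is $k(t)$-Lipschitz. Continuity of $\lambda$ in $x$ also follows from (H4). (H6) holds because the supremum in \eqref{def-rh0} is taken over $f$ with $|f|\leq\lambda$. (H7) follows from Steps 1--2, since $H^{\ast}(v)=l(u)$ for some $u$. For (H5), note that each term $\langle p,f(t,x,u)\rangle-l(t,x,u)$ is $k(t)(1+|p|)$-Lipschitz in $x$. (H2) holds because $H(t,x,0)=-\inf_u l\leq-\phi(t)$. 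For (H1), convexity is clear and continuity in $x$ comes from (H5).

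\emph{Step 4: measurability in $t$.} This is where I expect the main technical obstacle. I would write $U(t)$ via a Castaing representation $\{u_j(t)\}$, which is available since $U$ is measurable with closed nonempty values. By continuity of $f$ and $l$ in $u$, the supremum over $U(t)$ equals the countable supremum over the $u_j(t)$. Each map $t\mapsto\langle p,f(t,x,u_j(t))\rangle-l(t,x,u_j(t))$ is measurable because $f$ and $l$ are Carathéodory. The same argument makes $\lambda(\cdot,x)$ measurable.

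\emph{Step 5: $\tn{(OPC)}_H$.} By Step 1, $\D H^{\ast}(t,y,\cdot)=f(t,y,U(t))$, so $\tn{(OPC)}_H$ is literally (OPC) rewritten.
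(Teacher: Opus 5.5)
Your proof is correct, and its overall structure matches the paper's. Both define $\lambda$ as the supremum of $|f|+|l|$, read off (H2)--(H6) from (h1), (h2), (h5), (h6), and obtain measurability in $t$ from a Castaing-type argument; your explicit countable-supremum argument is what the paper gets by citing Aubin--Frankowska, Thm. 8.2.11. Both then identify $\{(f,l+r)\}$ with $\E H^{\ast}(t,x,\cdot)$, get the graph inclusion by the same $r=0$ argument, and deduce $\tn{(OPC)}_H$ from the domain identity.

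The genuine difference is how you prove the central identity $E(t,x)=\E H^{\ast}(t,x,\cdot)$. The paper uses two separation lemmas:
\begin{itemize}
\item Lemma \ref{rep-rde} first proves $\D H^{\ast}(t,x,\cdot)=f(t,x,U(t))$, by separating a point from $f(t,x,U(t))$ and testing $H$ at $p=-(n+1)q$.
\item Lemma \ref{rep-epi} then separates in $\R^{N+1}$. There the domain identity is needed to exclude a separating normal with $\tau=0$.
\end{itemize}
You instead observe that the closed, convex, upward-stable set $E(t,x)$ is the epigraph of a proper lower semicontinuous convex $g$. Properness uses that $l(t,x,\cdot)$ is bounded below on $U(t)$, which your compactness remark gives; it also follows in general from $\inf_{u}l=-H(t,x,0)>-\infty$. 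Evaluating the support function in direction $(p,-1)$ gives $g^{\ast}=H(t,x,\cdot)$, and Fenchel--Moreau then yields $g=H^{\ast}(t,x,\cdot)$. The domain identity comes for free as the projection of $E(t,x)$.

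Your route is shorter and more conceptual, since the separation is hidden inside the biconjugation theorem. Deriving (H7) directly from the graph inclusion is also neater than the paper's separate upper and lower bounds. The paper's route has its own payoff: Lemma \ref{rep-rde} is a standalone statement that needs only closedness and convexity of $f(t,x,U(t))$, not of the epigraphical set. The paper reuses it later, for instance for the equivalence of Definitions \ref{dws1} and \ref{dws2}.
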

In the proof of Theorem \ref{rwwar-i} we use the following Lemmas.
\begin{Lem}\label{rep-rde}
Let $H(t,x,\cdot)$ be a real-valued and convex function defined on $\R^{\scriptscriptstyle N}$. Assume that $U(t)$ is a nonempty  subset of $\R^{\scriptscriptstyle M}$. Furthermore, let $f(t,x,\cdot)$ be a function defined on $\R^{\scriptscriptstyle M}$ with values in $\R^{\scriptscriptstyle N}$, and the set $f(t,x,U(t))$ be closed and convex. Moreover, let $l(t,x,\cdot)$ be a  real-valued function defined on $\R^{\scriptscriptstyle M}$.  If  $(U,f,l)$ is a representation of $H$, then
\begin{equation}\label{rep-rde-neq}
\D H^{\ast}(t,x,\cdot)\,=\,f(t,x,U(t)).
\end{equation}
\end{Lem}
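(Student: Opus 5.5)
We prove the two inclusions in \eqref{rep-rde-neq} separately, with $t$ and $x$ fixed throughout. Write $F:=f(t,x,U(t))$, which is closed and convex by assumption.

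\emph{The inclusion $F\subset\D H^{\ast}(t,x,\cdot)$.} This follows at once from \eqref{rep-inc}, since $(U,f,l)$ is a representation of $H$. For every $u\in U(t)$ we have $(f(t,x,u),l(t,x,u))\in\E H^{\ast}(t,x,\cdot)$, so
$$H^{\ast}(t,x,f(t,x,u))\leq l(t,x,u)<+\infty .$$
Moreover, $H^{\ast}(t,x,\cdot)$ never takes the value $-\infty$, because $H$ is real-valued. Hence $f(t,x,u)\in\D H^{\ast}(t,x,\cdot)$.

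\emph{The inclusion $\D H^{\ast}(t,x,\cdot)\subset F$.} We argue by contradiction, using separation. Suppose $v_0\in\D H^{\ast}(t,x,\cdot)$ but $v_0\notin F$. Since $F$ is nonempty, closed and convex, strict separation gives $p_0\neq 0$ and $\varepsilon>0$ with
$$\langle p_0,w\rangle\leq\langle p_0,v_0\rangle-\varepsilon\quad\text{for all } w\in F .$$
We test the representation formula \eqref{def-rh0} along the ray $sp_0$ with $s>0$. For every $s>0$ and every $u\in U(t)$, the Fenchel--Young inequality gives
$$s\langle p_0,v_0\rangle-H^{\ast}(t,x,v_0)\leq H(t,x,sp_0)=\sup_{u\in U(t)}\{\,s\langle p_0,f(t,x,u)\rangle-l(t,x,u)\,\}.$$
For each $u$, the expression inside the supremum is at most $s\langle p_0,v_0\rangle-s\varepsilon-l(t,x,u)$. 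Taking the supremum over $u$, we get
$$H(t,x,sp_0)\leq s\langle p_0,v_0\rangle-s\varepsilon-\inf_{u\in U(t)}l(t,x,u).$$

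\emph{The main obstacle.} We need a lower bound on $l(t,x,\cdot)$ over $U(t)$, and this is where the representation property itself must be used. Take $p=0$ in \eqref{def-rh0}:
$$H(t,x,0)=\sup_{u\in U(t)}\{-l(t,x,u)\}=-\inf_{u\in U(t)} l(t,x,u).$$
Since $H(t,x,0)$ is finite, $\inf_{u\in U(t)}l(t,x,u)=-H(t,x,0)>-\infty$.

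Combining the two bounds, for all $s>0$ we obtain
$$s\langle p_0,v_0\rangle-H^{\ast}(t,x,v_0)\leq s\langle p_0,v_0\rangle-s\varepsilon+H(t,x,0),$$
that is, $s\varepsilon\leq H^{\ast}(t,x,v_0)+H(t,x,0)$. The right-hand side is finite and does not depend on $s$, so letting $s\to\infty$ gives a contradiction. Therefore $\D H^{\ast}(t,x,\cdot)\subset F$, and together with the first inclusion this proves \eqref{rep-rde-neq}.

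Convexity of $F$ is used only to separate $v_0$ from $F$. Closedness of $F$ is what makes the separation strict, i.e.\ gives $\varepsilon>0$.
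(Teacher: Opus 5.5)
Your proof is correct and follows the paper's route: strict separation of $v_0$ from $f(t,x,U(t))$, then testing the representation formula along the ray through the separating vector, using the finite lower bound $\inf_{u\in U(t)}l(t,x,u)=-H(t,x,0)$. The only differences are cosmetic. You bound the supremum uniformly in $u$ and let $s\to\infty$, whereas the paper fixes a large $n$ and picks a near-maximizer $u_\varepsilon$. You also read off the lower bound on $l$ directly from $p=0$ in \eqref{def-rh0}, where the paper obtains it via $H^{\ast}\geq -H(t,x,0)$ together with \eqref{rep-inc}.
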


\begin{proof}
Given that $H(t,x,\cdot)$ is a real-valued and convex function, it follows that $H^{\ast}(t,x,\cdot)$ is a proper function. Moreover, by \eqref{rep-inc} we have
\begin{equation}\label{rep-rde-d1}
H^{\ast}(t,x,f(t,x,u))\leq l(t,x,u),\;\;\;\forall\;u\in U(t).
\end{equation}
 Hence  $f(t,x,U(t))\subset \D H^{\ast}(t,x,\cdot)$. Now we show that $\D H^{\ast}(t,x,\cdot)\subset f(t,x,U(t))$.\linebreak We suppose that this inclusion is false.
Then there exists an element $v\in \D H^{\ast}(t,x,\cdot)$\linebreak and $v\not\in f(t,x,U(t))$. The set $f(t,x,U(t))$ is nonempty, closed and convex, so by the\linebreak Separation Theorem, there exist  $q\in\R^{\scriptscriptstyle N}$ and $\alpha,\beta\in\R$ such that
\begin{equation*}
 \langle\,v,q \,\rangle\leq \alpha<\beta\leq \langle\, f(t,x,u),q\,\rangle,\;\;\;\forall\;u\in U(t).
\end{equation*}
We notice that by the above inequality we obtain
\begin{equation}\label{stod-dlae0101}
 0\,<\,\beta-\alpha\,\leq\, \langle\, f(t,x,u)-v,q\,\rangle,  \;\;\;\forall\;u\in U(t).
\end{equation}
By \eqref{tlf-1} we have $H^{\ast}(t,x,v)\geq -H(t,x,0)$ for all $v\in\R^{\scriptscriptstyle N}$. The latter, together with \eqref{rep-rde-d1}, implies that $l(t,x,u)\geq-H(t,x,0)$ for all $u\in U(t)$. This means that the function $l(t,x,\cdot)$ is bounded from below on the set $U(t)$.
Let us define  $\xi(t,x):=\inf_{u\in U(t)}l(t,x,u)$ and $\varepsilon:=\beta-\alpha$.  Let $n\in\N$ be large enough so that
\begin{equation}\label{stod-dlae01011}
H^{\ast}(t,x,v)-\xi(t,x)\;<\;n(\beta-\alpha).
\end{equation}
Since $H(t,x,\cdot)$ is real-valued, for  $\,p:=-(n+1)q\,$  there exists  $\,u_{\varepsilon}\in U(t)\,$  such that
\begin{equation}\label{stod-dlae01012}
H(t,x,p)-\varepsilon\leq \langle\, p,f(t,x,u_{\varepsilon})\,\rangle-l(t,x,u_{\varepsilon}).
\end{equation}
From  (\ref{stod-dlae01011}), (\ref{stod-dlae01012}) and (\ref{stod-dlae0101}), it follows that
\begin{eqnarray*}
n(\beta-\alpha)
&>& \langle\,v,p\,\rangle - H(t,x,p)-\xi(t,x)\\
&\geq & \langle\, v-f(t,x,u_{\varepsilon}),p\,\rangle-\varepsilon\\
&\geq & (n+1)(\beta-\alpha)-\varepsilon\;=\;n(\beta-\alpha).
\end{eqnarray*}
Thus, we obtain a contradiction, that completes the proof.
\end{proof}

\begin{Lem}\label{rep-epi}
Under the assumptions of Lemma \ref{rep-rde}, if  $(U,f,l)$ is a representation of $H$ with the closed, convex set
$\{(f(t,x,u),l(t,x,u)+r)\mid u\in U(t),\,r\geq 0\}$, then
\begin{eqnarray}\label{rep-epi-neq}
\E H^{\ast}(t,x,\cdot)\,=\,\{(f(t,x,u),l(t,x,u)+r)\mid u\in U(t),\,r\geq 0\}.
\end{eqnarray}
\end{Lem}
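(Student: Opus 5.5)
Denote by $E(t,x):=\{(f(t,x,u),l(t,x,u)+r)\mid u\in U(t),\,r\geq 0\}$ the set on the right-hand side of \eqref{rep-epi-neq}. We prove the two inclusions separately, following the scheme of the proof of Lemma \ref{rep-rde}.

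\emph{First inclusion.} By \eqref{rep-inc} we have $(f,l)(t,x,U(t))\subset\E H^{\ast}(t,x,\cdot)$. An epigraph is stable under adding $(0,r)$ with $r\geq 0$, so $E(t,x)\subset\E H^{\ast}(t,x,\cdot)$ follows at once.

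\emph{Reverse inclusion.} We argue by contradiction. Suppose $(v,s)\in\E H^{\ast}(t,x,\cdot)$ but $(v,s)\notin E(t,x)$. The set $E(t,x)$ is nonempty, closed and convex, so the Separation Theorem gives $(q,\gamma)\in\R^{\scriptscriptstyle N}\times\R$ and $\alpha<\beta$ with
\begin{equation*}
\langle v,q\rangle+\gamma s\leq\alpha<\beta\leq\langle f(t,x,u),q\rangle+\gamma\,(l(t,x,u)+r)
\end{equation*}
for all $u\in U(t)$ and $r\geq 0$. Letting $r\to\infty$ forces $\gamma\geq 0$.

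\emph{Case $\gamma>0$.} Normalize to $\gamma=1$ and set $p:=-q$. The inequality at $r=0$ reads $\langle p,f(t,x,u)\rangle-l(t,x,u)\leq \langle p,v\rangle-s-(\beta-\alpha)$ for every $u\in U(t)$. Taking the supremum over $u$ and using \eqref{def-rh0} gives $H(t,x,p)\leq\langle p,v\rangle-s-(\beta-\alpha)$. Hence $H^{\ast}(t,x,v)\geq\langle p,v\rangle-H(t,x,p)\geq s+(\beta-\alpha)>s$, which contradicts $(v,s)\in\E H^{\ast}(t,x,\cdot)$.

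\emph{Case $\gamma=0$.} The separation becomes $\langle v,q\rangle\leq\alpha<\beta\leq\langle f(t,x,u),q\rangle$ for all $u$, so $v\notin\mathrm{cl}\,\mathrm{con}\,f(t,x,U(t))$. On the other hand, $v\in\D H^{\ast}(t,x,\cdot)$, and Lemma \ref{rep-rde} gives $\D H^{\ast}(t,x,\cdot)=f(t,x,U(t))$. This is a contradiction. Lemma \ref{rep-rde} is applicable because its hypotheses (closedness and convexity of $f(t,x,U(t))$) are part of the standing assumptions. Alternatively, one can avoid Lemma \ref{rep-rde} and repeat its argument: use the lower bound $l\geq -H(t,x,0)$ and test with $p=-(n+1)q$ for large $n$, exactly as in its proof.

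The only delicate point is this vertical case $\gamma=0$, where the separating hyperplane carries no information about the cost coordinate. It is handled by Lemma \ref{rep-rde} (or its argument). Everything else is the standard Fenchel duality computation.
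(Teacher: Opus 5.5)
Your proof is correct and follows essentially the same route as the paper. The paper gets the easy inclusion from \eqref{rep-inc}, separates $(v,s)$ from the closed convex set, excludes the wrong sign of the last component by letting $r\to\infty$, disposes of the vertical case via Lemma \ref{rep-rde}, and contradicts $H^{\ast}(t,x,v)\leq s$ via the Fenchel inequality in the nonvertical case. The only difference, the orientation of the separating functional (your $\gamma\geq 0$ with $p=-q$ versus the paper's $\tau\leq 0$ with $\hat{q}=q/|\tau|$), is purely cosmetic.
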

\begin{proof}
Let us define $\Gamma(t,x):=\{(f(t,x,u),l(t,x,u)+r)\mid u\in U(t),\,r\geq 0\}$.
Given that $H(t,x,\cdot)$ is a real-valued and convex function, it follows that $H^{\ast}(t,x,\cdot)$ is a proper function.\linebreak Moreover, by \ref{rep-inc} we have $(f,l)(t,x,U(t))\subset\E H^{\ast}(t,x,\cdot)$. Hence
$$(f,l)(t,x,U(t))+\{0\}\times[0,\infty)\subset\E H^{\ast}(t,x,\cdot)+\{0\}\times[0,\infty).$$
Therefore, $\Gamma(t,x)\subset\E H^{\ast}(t,x,\cdot)$. Now we show that $\E H^{\ast}(t,x,\cdot)\subset\Gamma(t,x)$. We suppose\linebreak that this inclusion is false.
Then there exists  $(v,\eta)\in \E H^{\ast}(t,x,\cdot)$ and $(v,\eta)\not\in \Gamma(t,x)$.\linebreak The set $\Gamma(t,x)$ is nonempty, closed and convex, so by the Separation Theorem, there exist  $(q,\tau)\in\R^{\scriptscriptstyle N}\times\R$ and $\alpha\in\R$ such that
\begin{equation}\label{rep-epi-1}
 \langle\,(f(t,x,u),l(t,x,u)+r),(q,\tau) \,\rangle\leq \alpha< \langle\, (v,\eta),(q,\tau)\,\rangle,\;\;\;\forall\;u\in U(t),\,r\geq 0.
\end{equation}
By the inequalities \eqref{rep-epi-1}, it follows that
\begin{equation}\label{rep-epi-2}
 \langle\,f(t,x,u),q\,\rangle+l(t,x,u)\,\tau+r\,\tau\leq \alpha,\;\;\;\forall\;u\in U(t),\,r\geq 0.
\end{equation}
We note that if  $\tau>0$, then  passing to the  limit in \eqref{rep-epi-2} as $r\to\infty$, we obtain that $\infty\leq\alpha$, in contradiction to the fact that $\alpha$ belongs to $\R$. Therefore, $\tau\leq 0$. Now, we show that $\tau$ cannot be equal to zero. Suppose that $\tau=0$. Then, by \eqref{rep-epi-1}, we get
\begin{equation}\label{rep-epi-3}
\langle\,f(t,x,u),q\,\rangle\leq \alpha< \langle\, v,q\,\rangle,\;\;\;\forall\;u\in U(t).
\end{equation}
Since $(v,\eta)\in \E H^{\ast}(t,x,\cdot)$, in particular $v\in\D H^{\ast}(t,x,\cdot)$. In view of Lemma \ref{rep-rde}, we have $f(t,x,U(t))=\D H^{\ast}(t,x,\cdot)$. Therefore, there exists $\tilde{u}\in U(t)$ such that $f(t,x,\tilde{u})=v$. By \eqref{rep-epi-3} we obtain $\langle\, v,q\,\rangle=\langle\,f(t,x,\tilde{u}),q\,\rangle\leq \alpha< \langle\,v,q\,\rangle$, which leads to a contradiction. Consequently, we get $\tau<0$. Substituting $r:=0$ into \eqref{rep-epi-1} and dividing by $|\tau|$, we obtain
\begin{equation}\label{rep-epi-4}
 \langle\,f(t,x,u),\hat{q} \,\rangle-l(t,x,u)\leq \alpha< \langle\,v,\hat{q}\,\rangle-\eta,\;\;\;\forall\;u\in U(t),
\end{equation}
where $\hat{q}:=q/|\tau|$. Since $(U(t),f,l)$ is a representation of $H$, by \eqref{rep-epi-4} we have $H(t,x,\hat{q})\leq\alpha$.
Since $(v,\eta)\in \E H^{\ast}(t,x,\cdot)$, in particular $H^{\ast}(t,x,v)\leq\eta$.  The latter, together with \eqref{tlf-2}  and \eqref{rep-epi-4}, implies that $\alpha<H(t,x,\hat{q})$.  Consequently, we get $H(t,x,\hat{q})\leq\alpha<H(t,x,\hat{q})$. Thus, we obtain a contradiction, that completes the proof.
\end{proof}

\begin{proof}[Proof of Theorem \ref{rwwar-i}.]
 We define $\lambda(\cdot,\cdot)$ on the set $[0,\infty)\times\R^{\scriptscriptstyle N}$  by the formula
\begin{equation}\label{deflambda}
 \lambda(t,x):=\sup\nolimits_{u\in U(t)}\{\,|f(t,x,u)|+|l(t,x,u)|\,\}.
\end{equation}
By (h2) it follows that $\lambda(\cdot,\cdot)$ is a non-negative real-valued map on the set  $[0,\infty)\times\R^{\scriptscriptstyle N}$.
Since $(t,u)\to (f,l)(t,x,u)$ is a Carathéodory function for all $x\in\R^{\scriptscriptstyle N}$, by \cite[Thm. 8.2.11]{A-F} the map $t\to\lambda(t,x)$ is measurable for all $x\in\R^{\scriptscriptstyle N}$. We observe that (H3) follows directly from (h2) and (h5). It is not difficult to show that (H4) follows from (h6), in particular the map $x\to\lambda(t,x)$ is continuous for all $t\in[0,\infty)$.

Let $H$ be given by \eqref{def-rh0}. By (h2) it follows that $p\to H(t,x,p)$ is a convex real-valued map on   $\R^{\scriptscriptstyle N}$ for all $t\in[0,\infty)$ and $x\in\R^{\scriptscriptstyle N}$.
Since $(t,u)\to \langle\, p,f(t,x,u)\,\rangle-l(t,x,u)$ is a\linebreak Carathéodory map for all $x,p\in\R^{\scriptscriptstyle N}$, in view of \cite[Thm. 8.2.11]{A-F}, the map $t\to H(t,x,p)$ is measurable for all $x,p\in\R^{\scriptscriptstyle N}$. It is not difficult to show that (H5) follows from (h6) and (H6) follows from \eqref{deflambda}. We observe that (H5) and (H6) imply the map $(x,p)\to H(t,x,p)$ is locally Lipschitz continuous for all  $t\in[0,\infty)$. In particular, this map is continuous for all  $t\in[0,\infty)$. Thus, the condition (H1) is satisfied.

By (h2) and (h3) we get that the set $(f,l)(t,x,U(t))$ is compact for all $t\in[0,\infty)$, $x\in\R^{\scriptscriptstyle N}$. The latter, together with (h4), implies that the set $\{(f(t,x,u),l(t,x,u)+r)\mid u\in U(t),\,r\geq 0\}$\linebreak is  closed and convex for all $t\in[0,\infty)$, $x\in\R^{\scriptscriptstyle N}$. Additionally, the set $f(t, x, U(t))$ is\linebreak compact and convex, as it is the projection of the above two sets with these properties.\linebreak Consequently, we obtain that the assumptions of Lemma \ref{rep-rde} are satisfied. Therefore, by Lemmas \ref{rep-rde} and \ref{rep-epi}, for all $t\in[0,\infty)$, $x\in\R^{\scriptscriptstyle N}$, we get that  \eqref{rep-rde-neq} and \eqref{rep-epi-neq} hold.

We show that (H7) holds. Let us fix arbitrarily $t\in[0,\infty)$, $x\in\R^{\scriptscriptstyle N}$. Let $v\in\D H^{\ast}(t,x,\cdot)$. By  \eqref{rep-rde-neq} there exists $u\in U(t)$ such that $v=f(t,x,u)$. The latter, together with \eqref{rep-inc}, implies that  $H^{\ast}(t,x,v)=H^{\ast}(t,x,f(t,x,u))\leq l(t,x,u)$. Therefore,\vspace{-1mm}
\begin{equation}\label{rwwar-i-1}
H^{\ast}(t,x,v)\,\leq\,\lambda(t,x),\;\;\forall\,v\in\D H^{\ast}(t,x,\cdot).\vspace{-1mm}
\end{equation}
In view of \eqref{tlf-1} we have $H^{\ast}(t,x,v)\geq -H(t,x,0)$ for all $v\in\R^{\scriptscriptstyle N}$. Moreover, in view of \eqref{def-rh0} we obtain $-H(t,x,0)=\inf_{u\in U(t)}l(t,x,u)$. Therefore, $H^{\ast}(t,x,v)\,\geq\,\inf\nolimits_{u\in U(t)}l(t,x,u)$ for all $v\in\R^{\scriptscriptstyle N}$.
This inequality, along with $l(t,x,u)\geq -\lambda(t,x)$ for all $u\in U(t)$, implies that\vspace{-1mm}
\begin{equation}\label{rwwar-i-3}
H^{\ast}(t,x,v)\,\geq\,-\lambda(t,x),\;\;\forall\,v\in\R^{\scriptscriptstyle N}.\vspace{-1mm}
\end{equation}
Combining inequalities \eqref{rwwar-i-1} and \eqref{rwwar-i-3} we obtain (H7). By (h1), we have $l(t,x,u)\geq \phi(t)\,$ for  all  $u\in\R^{\scriptscriptstyle M}$. The latter, together with $H(t,x,0)=\sup_{u\in U(t)}\{-l(t,x,u)\}$, implies (H2).

Since $f(t,x,U(t))=\D H^{\ast}(t,x,\cdot)$ for all $t\in[0,\infty)$ and $x\in\R^{\scriptscriptstyle N}$, the condition $\tn{(OPC)}_H$ follows directly from the condition (OPC).

It remains to be proven \eqref{def-grh}. Let us fix arbitrarily $t\in[0,\infty)$, $x\in\R^{\scriptscriptstyle N}$. Since $(U,f,l)$ is a representation of $H$, by \eqref{rep-inc} we have $(f,l)(t,x,U(t))\subset\E H^{\ast}(t,x,\cdot)$. Now we show that
$\G H^{\ast}(t,x,\cdot)\subset(f,l)(t,x,U(t))$. Let $(v,\eta)\in\G H^{\ast}(t,x,\cdot)$. Then, from the definition of the graph, we have $\eta=H^{\ast}(t,x,v)$ and $(v,\eta)\in\E H^{\ast}(t,x,\cdot)$. The latter, together with  \eqref{rep-epi-neq}, implies that there exist $u\in U(t)$ and $r\geq 0$ such that $v=f(t,x,u)$ and $\eta=l(t,x,u)+r$. Since  $(f,l)(t,x,U(t))\subset\E H^{\ast}(t,x,\cdot)$, it follows that $H^{\ast}(t,x,f(t,x,u))\leq l(t,x,u)$. Therefore,\vspace{-1mm}
$$r+l(t,x,u)=\eta=H^{\ast}(t,x,v)=H^{\ast}(t,x,f(t,x,u))\leq l(t,x,u).\vspace{-1.5mm}$$
From the above inequality, we obtain that $r=0$. Thus, $(v,\eta)=(f,l)(t,x,u)\in (f,l)(t,x,U(t))$. Consequently, the property \eqref{def-grh} holds true.
\end{proof}

Our aim is to construct a representation $(U,f,l)$ of the Hamiltonian $H$ such that $(U,f,l)$ satisfies $\tn{(h)}''$ and $\tn{(OPC)}$, provided that $H$ satisfies $\tn{(h)}''_H$ and $\tn{(OPC)}_H$. A preliminary result is therefore required. We define  $F:[0,\infty)\times\R^{\scriptscriptstyle N}\rightsquigarrow\R^{\scriptscriptstyle N}$ and $E:[0,\infty)\times\R^{\scriptscriptstyle N}\rightsquigarrow\R^{\scriptscriptstyle N+1}$ by
\begin{equation}\label{def-FE}
F(t,x)=\D H^{\ast}(t,x,\cdot)\qquad\tn{and}\qquad E(t,x)=\E H^{\ast}(t,x,\cdot).
\end{equation}
Using Propositions 2.5 and 2.6 in \cite{AM}, and Chapters 5 and 14 in \cite{R-W}, we obtain:

\begin{Cor}\label{wrow-wm} Assume that $H:[0,\infty)\times\R^{\scriptscriptstyle N}\times\R^{\scriptscriptstyle N}\to\R$ satisfies \tn{(H1)}. Then
\begin{enumerate}
\item[\tn{\bf{(M1)}}] $F(t,x)$ is a nonempty, convex subset of $\;\R^{\scriptscriptstyle N}$ for all $t\in[0,\infty)$, $x\in\R^{\scriptscriptstyle N}$\tn{;}
\item[\tn{\bf{(M2)}}] $E(t,x)$ is a nonempty, closed, convex subset of $\;\R^{\scriptscriptstyle N+1}$ for all $t\in[0,\infty)$, $x\in\R^{\scriptscriptstyle N}$\tn{;}
\item[\tn{\bf{(M3)}}] $x\to F(t,x)$ is lower semicontinuous for all  $t\in[0,\infty)$\tn{;}
\item[\tn{\bf{(M4)}}] $x\to E(t,x)$ has a closed graph  and is lower semicontinuous for all $t\in[0,\infty)$\tn{;}
\item[\tn{\bf{(M5)}}] $t\to F(t,x)$ and $t\to E(t,x)$ are measurable for all $x\in\R^{\scriptscriptstyle N}$\tn{.}\vspace{1mm}
\item[]\hspace{-1.3cm}Additionally, if $H$ satisfies \tn{(H5)} with $k:[0,\infty)\to[0,\infty)$, then\vspace{1mm}
\item[\tn{\bf{(M6)}}] $\textit{d\!l}_\mathcal{H}(F(t,x),F(t,y))\leq k(t)\,|x-y|\,$ for all $t\in[0,\infty)$, $x,y\in\R^{\scriptscriptstyle N}$\tn{;}
\item[\tn{\bf{(M7)}}] $\textit{d\!l}_\mathcal{H}(E(t,x),E(t,y))\leq 2\,k(t)\,|x-y|\,$ for all $t\in[0,\infty)$, $x,y\in\R^{\scriptscriptstyle N}$\tn{.}\vspace{1mm}
\item[]\hspace{-1.3cm}Additionally, if $H$ satisfies \tn{(H6)} and \tn{(H7)} with $\lambda:[0,\infty)\times\R^{\scriptscriptstyle N}\to[0,\infty)$ being a continuous
\item[]\hspace{-1.3cm}function with respect to the second variable, then\vspace{1mm}
\item[\tn{\bf{(M8)}}] $F(t,x)$ is a nonempty, compact, convex subset of $\;\R^{\scriptscriptstyle N}$ for all $t\in[0,\infty)$, $x\in\R^{\scriptscriptstyle N}$\tn{;}
\item[\tn{\bf{(M9)}}]  $x\to F(t,x)$  is continuous in the sense of the Hausdorff metric  for all  $t\in[0,\infty)$\tn{;}
\item[\tn{\bf{(M10)}}] $\|F(t,x)\|\leq\lambda(t,x)$ for all $t\in[0,\infty)$, $x\in\R^{\scriptscriptstyle N}$\tn{.}
\end{enumerate}
\end{Cor}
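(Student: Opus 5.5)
The plan is to derive (M1)--(M10) one at a time from the standard convex-analysis facts cited just before the statement: the conjugacy formulas \eqref{tlf-1}--\eqref{tlf-2}, Propositions 2.5 and 2.6 in \cite{AM}, and the material on epi-convergence and measurability in Chapters 5 and 14 of \cite{R-W}.

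\textbf{Pointwise properties (M1), (M2).} Fix $(t,x)$. Since $H(t,x,\cdot)$ is real-valued and convex, it is finite, continuous and proper. Hence $H^{\ast}(t,x,\cdot)$ is proper, convex and lower semicontinuous, and its epigraph $E(t,x)$ is nonempty, closed and convex. The domain $F(t,x)$ is the projection of $E(t,x)$ onto $\R^{\scriptscriptstyle N}$, so it is nonempty and convex.

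\textbf{Continuity and measurability (M3), (M4), (M5).}
\begin{enumerate}
\item[(a)] For (M4), I use that $x\mapsto H(t,x,\cdot)$ is continuous into real-valued convex functions, with pointwise convergence; for finite convex functions this is uniform on compacts. Hence $H(t,x_n,\cdot)$ epi-converges to $H(t,x,\cdot)$.
\item[(b)] By Wijsman's theorem \cite[Thm.~11.34]{R-W}, the conjugates $H^{\ast}(t,x_n,\cdot)$ then epi-converge to $H^{\ast}(t,x,\cdot)$. This means the epigraphs converge in the Painlev\'e--Kuratowski sense, which gives both the closed graph and the lower semicontinuity in (M4).
\item[(c)] Lower semicontinuity of $F$ in (M3) follows by projecting: given $v\in F(t,x)$, choose $(v_n,r_n)\in E(t,x_n)$ converging to $(v,H^{\ast}(t,x,v))$; then $v_n\in F(t,x_n)$ and $v_n\to v$.
\item[(d)] For (M5), $t\mapsto H(t,x,\cdot)$ is a normal integrand, being Carath\'eodory in $(t,p)$. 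By \cite[Thm.~14.50]{R-W} the conjugate is again a normal integrand, so $t\mapsto E(t,x)$ is measurable. Then $F(t,x)$ is measurable as the projection of a measurable closed-valued map (equivalently, $\mathrm{cl}\,F$ is measurable, and measurability of open-set preimages is unaffected by taking closures).
\end{enumerate}

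\textbf{Lipschitz estimates (M6), (M7).} Under (H5), fix $v\in F(t,x)$. From $H(t,y,p)\le H(t,x,p)+k(t)|x-y|(1+|p|)$, conjugation gives
$$H^{\ast}(t,y,\cdot)\ \ge\ \big(H(t,x,\cdot)+k(t)|x-y|(1+|\cdot|)\big)^{\ast}.$$
The right-hand side is an infimal convolution of $H^{\ast}(t,x,\cdot)$ with the conjugate of $\varepsilon(1+|\cdot|)$, namely $-\varepsilon$ plus the indicator of $\varepsilon\B$, where $\varepsilon=k(t)|x-y|$. This yields
$$H^{\ast}(t,y,w)\ \le\ \inf_{|v-w|\le\varepsilon} H^{\ast}(t,x,v)+\varepsilon,$$
with the roles of $x,y$ swapped as needed. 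Consequently $F(t,x)\subset F(t,y)+\varepsilon\B$, and $E(t,x)\subset E(t,y)+\varepsilon\B\times[-\varepsilon,\varepsilon]$, which is contained in a $2\varepsilon$-neighbourhood. Symmetrizing gives (M6) and (M7). This is exactly the content of \cite[Prop.~2.5--2.6]{AM}, which I would simply invoke.

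\textbf{Compactness and Hausdorff continuity (M8), (M9), (M10).}
\begin{enumerate}
\item[(a)] Under (H6), $H(t,x,\cdot)$ is $\lambda(t,x)$-Lipschitz, so $F(t,x)\subset\lambda(t,x)\B$. Indeed, if $|v|>\lambda$, then taking $p=sv$ and letting $s\to\infty$ gives $H^{\ast}=\infty$. This is (M10).
\item[(b)] For closedness, (H7) says $H^{\ast}(t,x,\cdot)$ is bounded by $\lambda(t,x)$ on its domain. Lower semicontinuity then shows that limits of points of the domain stay in the domain, so $F(t,x)$ is closed, hence compact. This gives (M8).
\item[(c)] (M9) is the main obstacle: Hausdorff continuity needs upper semicontinuity in addition to (M3). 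I would take $v_n\in F(t,x_n)$ with $v_n\to v$. Then $(v_n,\lambda(t,x_n))\in E(t,x_n)$ by (H7), and this sequence converges to $(v,\lambda(t,x))$. The closed graph from (M4) gives $(v,\lambda(t,x))\in E(t,x)$, so $v\in F(t,x)$; here the continuity of $\lambda$ in $x$ is essential.
\item[(d)] Local uniform boundedness from (M10), together with continuity of $\lambda$, gives compactness of the values along the sequence. Upper and lower semicontinuity with compact values then yields Hausdorff continuity.
\end{enumerate}
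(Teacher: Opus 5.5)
Your proposal is correct and follows the paper's route. The paper gives no argument beyond citing Propositions 2.5--2.6 of \cite{AM} and Chapters 5 and 14 of \cite{R-W}, and your Wijsman, normal-integrand and inf-convolution steps are what those citations supply.

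There is one slip in the (M6)--(M7) step: your displayed inequality has the infimum on the wrong side. Conjugating $H(t,x,\cdot)\le H(t,y,\cdot)+\varepsilon(1+|\cdot|)$ gives
\[
H^{\ast}(t,x,v)\;\ge\;\min_{|w-v|\le\varepsilon}H^{\ast}(t,y,w)-\varepsilon .
\]
As you wrote it, the inequality would force $F(t,x)+\varepsilon\B\subset F(t,y)$, which is false already for $H(t,x,p)=|p|$ with $k\equiv 1$. The corrected form is what yields the inclusions you state, $F(t,x)\subset F(t,y)+\varepsilon\B$ and $E(t,x)\subset E(t,y)+2\varepsilon\B$.
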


To construct this representation, we adopt the method outlined by Misztela in \cite{AM}. The specific nature of conditions $\tn{(h)}''\!\!$ and $\!\tn{(OPC)}$ necessitates modifications to the \mbox{constructions} used in the proofs of Theorems 5.6 and 5.8 in \cite{AM}. In particular, this adjustment concerns a change to align the definition of the function $\omega(\cdot,\cdot)$ in the proof of Theorem 5.8 with the definition in this paper, as presented by formula \eqref{def-omega}. In Theorem 5.6 in \cite{AM}\linebreak it is assumed that  $\omega(t,x) \geq 1$, but its conclusion remains valid when $\omega(t,x) \geq 0$.\linebreak Indeed, if $\omega(t,x) = 0$, then in the proof of Theorem 5.6, one should take $a:=0$ instead of $a:=z/\omega(t,x)$. The following two theorems are crucial to achieve this representation:

\begin{Th}\label{rep-t}
Assume that $\tn{(h)}''_H$ holds. Then there exist  $f:[0,\infty)\times\R^{\scriptscriptstyle N}\times\R^{\scriptscriptstyle N+1}\to\R^{\scriptscriptstyle N}$ and $l:[0,\infty)\times\R^{\scriptscriptstyle N}\times\R^{\scriptscriptstyle N+1}\to\R$,  measurable in $\,t\,$ for all $(x,u)\in\R^{\scriptscriptstyle N}\times\R^{\scriptscriptstyle N+1}$ and continuous in $(x,u)$ for all $t\in[0,\infty)$, such that for every $t\in[0,\infty)$, $x,p\in\R^{\scriptscriptstyle N}$
\begin{equation*}
 H(t,x,p)=\sup\nolimits_{u\in \B}\,\{\,\langle\, p,f(t,x,u)\,\rangle-l(t,x,u)\,\}
\end{equation*}
and $f(t,x,\B)=\D H^{\ast}(t,x,\cdot)$, where $\B$ is the closed unit ball in $\R^{\scriptscriptstyle N+1}$.
Additionally,
\begin{enumerate}
\item[\tn{\bf{(A1)}}] For all $t\in[0,\infty)$, $x,y\in\R^{\scriptscriptstyle N}$, $u\in\B$
\begin{equation*}
|f(t,x,u)-f(t,y,u)|+|l(t,x,u)-l(t,y,u)|\leq 40\,(N+1)\,k(t)\,|x-y|.
\end{equation*}
\item[\tn{\bf{(A2)}}] $|f(t,x,u)|+|l(t,x,u)|\leq 20\lambda(t,x)\,$ for  all $\,t\in[0,\infty)$, $x\in\R^{\scriptscriptstyle N}$, $u\in\B$.
\item[\tn{\bf{(A3)}}] $\G H^{\ast}(t,x,\cdot)\subset(f,l)(t,x,\B)\subset\E H^{\ast}(t,x,\cdot)\,$ for  all $\,t\in[0,\infty)$, $x\in\R^{\scriptscriptstyle N}$,\\ i.e. $(\B,f,l)$ is an epigraphical representation of $H$.
\item[\tn{\bf{(A4)}}] $l(t,x,u)\geq \phi(t)\,$ for  all $\,t\in[0,\infty)$, $x\in\R^{\scriptscriptstyle N}$, $u\in\R^{\scriptscriptstyle N+1}$.
\item[\tn{\bf{(A5)}}] Furthermore, if $H$ and $\lambda$ are continuous, so are $f$, $l$.
\end{enumerate}
\end{Th}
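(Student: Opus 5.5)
The plan is to apply the epigraphical-representation machinery of Misztela \cite{AM} (Theorems 5.6 and 5.8 there) to the set-valued maps $F$ and $E$ from \eqref{def-FE}. Corollary \ref{wrow-wm} supplies the required regularity: (M1)--(M10). The parametrization is built in two layers.

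First, for each $(t,x)$ we consider the bounded part of the epigraph,
$$\widetilde E(t,x)=E(t,x)\cap\big(\R^{\scriptscriptstyle N}\times(-\infty,\omega(t,x)]\big).$$
Here $\omega(t,x)$ is a height chosen of the order of $\lambda(t,x)$, and we will fix its precise form in \eqref{def-omega}. By (H7), $|H^{\ast}(t,x,v)|\leq\lambda(t,x)$ on $F(t,x)$, and by (M10), $\|F(t,x)\|\leq\lambda(t,x)$. Hence $\widetilde E(t,x)$ is a nonempty, compact, convex set. It contains $\G H^{\ast}(t,x,\cdot)$, it is contained in $\E H^{\ast}(t,x,\cdot)$, and its norm is at most a constant multiple of $\lambda(t,x)$.

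The key estimate is a Hausdorff Lipschitz bound for $x\mapsto\widetilde E(t,x)$ with constant a multiple of $k(t)$. We obtain it from (M7) together with (H4), since $\omega$ inherits a Lipschitz constant proportional to $k(t)$. The truncation must be handled as in \cite{AM}: cutting a Lipschitz family of epigraphs at a Lipschitz height keeps it Lipschitz, because the height stays uniformly above the graph part. Measurability in $t$ follows from (M5) and the measurability of $\lambda$.

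Next, we apply the Lipschitz parametrization theorem for compact convex set-valued maps, in the form of \cite[Thm. 5.6]{AM}, which rests on a Steiner-selection type construction. This gives a map $e(t,x,u)$ on $\B\subset\R^{\scriptscriptstyle N+1}$ with $e(t,x,\B)=\widetilde E(t,x)$. The map is measurable in $t$, continuous in $(x,u)$, and Lipschitz in $x$ with constant $c_N\cdot\mathrm{Lip}$; this is where a dimensional constant such as $20(N+1)$ arises. Its growth is bounded by a constant times $\|\widetilde E\|$. Note that $\omega$ may vanish, and we use the remark above that the theorem remains valid when $\omega\geq0$.

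We then write $e=(f,l)$. Property (A3) is immediate from $\G H^{\ast}\subset\widetilde E\subset\E H^{\ast}$, and it implies the representation formula via \cite[Prop. 5.7]{AM}. It also gives $f(t,x,\B)=F(t,x)=\D H^{\ast}(t,x,\cdot)$, since every $v\in F$ lies under the graph point $(v,H^{\ast}(t,x,v))$. Properties (A1) and (A2) follow from the constants just described. Property (A4) requires $l\geq\phi(t)$, and this holds because $l\geq H^{\ast}\geq -H(t,x,0)\geq\phi(t)$ by (H2).

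To define $f$ and $l$ for $u\in\R^{\scriptscriptstyle N+1}\setminus\B$, we compose with the radial retraction onto $\B$. This preserves continuity and all the bounds.

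Property (A5) holds because, when $H$ and $\lambda$ are jointly continuous, $E$ and $\omega$ are jointly continuous. The parametrization is then continuous in all variables.

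The main obstacle is the Lipschitz estimate for the truncated epigraphs with an explicit constant, together with a parametrization whose constant is linear in $N+1$. For this I would follow the proof of \cite[Thm. 5.8]{AM} closely, substituting the new $\omega$.
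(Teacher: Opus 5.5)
Your argument is sound, but it takes a somewhat different route from the paper's.

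\textbf{What the paper does.} The paper does not truncate the epigraph. It applies the Steiner-selection map \eqref{rep-t-01} directly to the unbounded closed convex sets $E(t,x)$, with radius $\omega=2\lambda$.
\begin{itemize}
\item \cite[Lem.~5.1]{AM} only requires the Hausdorff distance between the sets to be finite, and (M7) supplies this. It gives $5(N+1)(2k+2k)=20(N+1)k$ for $\mathrm{e}$, hence $40(N+1)k$ after the factor $2$ in \eqref{stw-inql-flel}.
\item No exact image identity is claimed. The paper uses only the sandwich $E(t,x)\cap\omega\B\subset\mathrm{e}(t,x,\B)\subset E(t,x)$, together with $\|\G H^{\ast}(t,x,\cdot)\|\le\|F\|+\sup_F|H^{\ast}|\le 2\lambda=\omega$.
\item The growth bound $|\mathrm{e}|\le 3\omega+2\dist(0,E)\le 5\omega$ then gives (A2).
\end{itemize}

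\textbf{What your cap buys and costs.} Capping at height $\omega$ gives you compact values, an exact parametrization $\mathrm{e}(t,x,\B)=\widetilde E(t,x)$, and a much smaller growth constant. The price is an extra lemma: Hausdorff-Lipschitz continuity of $x\mapsto\widetilde E(t,x)$. That lemma does hold, for the reason you indicate. Given $(v,r)\in\widetilde E(t,x)$, take $(v',r')\in E(t,y)$ within $2k(t)|x-y|$. If $r'>\omega(t,y)$, push it down to $(v',\omega(t,y))$. This point stays in $E(t,y)$ because $H^{\ast}(t,y,v')\le\lambda(t,y)\le\omega(t,y)$ by (H7).

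\textbf{The constants need to be fixed explicitly.} On your route the specific constants in (A1) and (A2) are not automatic, and you leave them unverified. With a cap at $2\lambda$ you only get $\textit{d\!l}_\mathcal{H}(\widetilde E(t,x),\widetilde E(t,y))\le 2\sqrt2\,k(t)|x-y|$. To have $\mathrm{e}(t,x,\B)=\widetilde E(t,x)$, the radius must be at least $\|\widetilde E(t,x)\|$, which can be as large as $\sqrt5\,\lambda$. Then \cite[Lem.~5.1]{AM} with the factor $2$ of \eqref{stw-inql-flel} gives about $50(N+1)k(t)$, which exceeds the claimed $40(N+1)k(t)$.

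So you must choose the cap and the radius explicitly. For instance, cap at $\lambda$ and use radius $\sqrt2\,\lambda$; then $\|\widetilde E\|\le\sqrt2\,\lambda$, so the image is still exact. This choice gives:
\begin{itemize}
\item $\textit{d\!l}_\mathcal{H}(\widetilde E(t,x),\widetilde E(t,y))\le\sqrt5\,k(t)|x-y|$;
\item $2\cdot 5(N+1)(\sqrt5+\sqrt2)\,k<40(N+1)\,k$, which is (A1);
\item $|f|+|l|\le 2\sqrt2\,\lambda\le 20\lambda$, which is (A2).
\end{itemize}
Alternatively, drop the truncation and argue directly on the epigraphs as the paper does.
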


\begin{proof}
By Corollary \ref{wrow-wm}, the set-valued maps $F$ and $E$ given by \ref{def-FE} satisfy (M1)-(M10).

We define $\mathrm{e}:[0,T]\times\R^{\scriptscriptstyle N}\times\R^{\scriptscriptstyle N+1}\to\R^{\scriptscriptstyle N+1}$ by the formula
\begin{equation}\label{rep-t-01}
\mathrm{e}(t,x,u):=S_{\!\scriptscriptstyle N+1}\big[E(t,x)\cap\B\big(\omega(t,x)\,u,2\,\dist(\omega(t,x)\,u,E(t,x))\big)\big],
\end{equation}
where $S_{\!\scriptscriptstyle N+1}[\,\cdot\,]$ is the Steiner selection defined as in \cite[p. 365]{A-F} and
\begin{equation}\label{def-omega}
\omega(t,x):=2\lambda(t,x).
\end{equation}
Using arguments similar to the ones used in
the proof of  Theorem 5.6 in \cite{AM}, we show that $\mathrm{e}(t,x,u)$ is  measurable in $\,t\,$  and continuous in $(x,u)$. Moreover,
\begin{equation}\label{stw-inql}
[E(t,x)\cap\omega(t,x)\B]\subset\mathrm{e}(t,x,\B)\subset\mathrm{e}(t,x,\R^{\scriptscriptstyle N+1})\subset E(t,x),
\end{equation}
for all $t\in[0,\infty)$, $x\in\R^{\scriptscriptstyle N}$. By (M10) and (H7), for all $t\in[0,\infty)$, $x\in\R^{\scriptscriptstyle N}$, we obtain
\begin{equation}\label{stw-inql-1}
\|\G H^{\ast}(t,x,\cdot)\|\leq\|F(t,x)\|+\sup\nolimits_{v\in F(t,x)}|H^{\ast}(t,x,v)|\leq\omega(t,x).
\end{equation}
Combining \eqref{stw-inql} and \eqref{stw-inql-1}, for all $t\in[0,\infty)$, $x\in\R^{\scriptscriptstyle N}$,  we get
\begin{equation}\label{stw-inql-2}
\G H^{\ast}(t,x,\cdot)\subset\mathrm{e}(t,x,\B)\subset\mathrm{e}(t,x,\R^{\scriptscriptstyle N+1})\subset\E H^{\ast}(t,x,\cdot).
\end{equation}
By \cite[Lem. 5.1]{AM} and (M7), for all $t\in[0,\infty)$, $x,y\in\R^{\scriptscriptstyle N}$, $u\in\B$, we obtain
\begin{eqnarray}\label{stw-inql-3}
|\mathrm{e}(t,x,u)-\mathrm{e}(t,y,u)| &\leq & 5(N+1)[\,\textit{d\!l}_\mathcal{H}(E(t,x),E(t,y))+|\omega(t,x)u-\omega(t,y)u|\,]\nonumber\\
&\leq & 5(N+1)[\,2k(t)|x-y|+2|\lambda(t,x)-\lambda(t,y)|\,]\nonumber\\
&\leq & 20(N+1)k(t)|x-y|.
\end{eqnarray}

In view of \cite[p. 366]{A-F} we have $S_{\!\scriptscriptstyle N+1}[C]\in C$ for any nonempty, convex and compact subset $C$ of $\R^{\scriptscriptstyle N+1}$. Thus, in view of \eqref{rep-t-01}, for all $t\in[0,T]$, $x\in\R^{\scriptscriptstyle N}$, $u\in\R^{\scriptscriptstyle N+1}$, we obtain
\begin{equation}\label{rep-t-02}
\mathrm{e}(t,x,u)\in\B\big(\omega(t,x)\,u,2\,\dist(\omega(t,x)\,u,E(t,x))\big).
\end{equation}
Combining \eqref{rep-t-02} and \eqref{stw-inql-1}, for all $t\in[0,\infty)$, $x\in\R^{\scriptscriptstyle N}$, $u\in\B$, we have
\begin{eqnarray}\label{stw-inql-4}
|\mathrm{e}(t,x,u)| &\leq & \omega(t,x)\,|u| +2\,\dist(\omega(t,x)\,u,E(t,x))\nonumber\\
&\leq & 3\,\omega(t,x)\,|u|+2\,\dist(0,E(t,x))\nonumber\\
&\leq & 3\,\omega(t,x)+2\,\|\G H^{\ast}(t,x,\cdot)\|\nonumber\\
&\leq & 5\,\omega(t,x)\;=\;10\,\lambda(t,x).
\end{eqnarray}

We define the functions $f$ and $l$ as components of the function~$\mathrm{e}$, i.e. $(f,l)=\mathrm{e}$. Then, by \eqref{stw-inql-2} and \cite[Prop. 5.7]{AM}, the triple $(\B,f,l)$ is a representation of $H$ and $f(t,x,\B)=\D H^{\ast}(t,x,\cdot)$ for all $t\in[0,\infty)$, $x\in\R^{\scriptscriptstyle N}$.  By \eqref{stw-inql-3}, for all $t\in[0,\infty)$, $x,y\in\R^{\scriptscriptstyle N}$, $u\in\B$,
\begin{equation}\label{stw-inql-flel}
|f(t,x,u)\!-\!f(t,y,u)|\!+\!|l(t,x,u)\!-\!l(t,y,u)|\leq 2|\mathrm{e}(t,x,u)\!-\!\mathrm{e}(t,y,u)|\leq 40(N\!+\!1)k(t)|x\!-\!y|.
\end{equation}
Therefore, the condition (A1) is satisfied. By \eqref{stw-inql-4}, for all $t\in[0,\infty)$, $x\in\R^{\scriptscriptstyle N}$, $u\in\B$,
\begin{equation}\label{stw-inql-fleg}
|f(t,x,u)|+|l(t,x,u)|\leq 2|\mathrm{e}(t,x,u)|\leq 10\omega(t,x)= 20\lambda(t,x).
\end{equation}
Therefore, the condition (A2) is satisfied.
By \eqref{stw-inql-2} we obtain (A3) and moreover,  we
get $H^{\ast}(t,x,f(t,x,u))\leq l(t,x,u)$ for all $t\in[0,\infty)$, $x\in\R^{\scriptscriptstyle N}$, $u\in\R^{\scriptscriptstyle N+1}$. The latter, together with (H2) and \eqref{tlf-1}, implies  $\phi(t)\leq-H(t,x,0)\leq H^{\ast}(t,x,f(t,x,u))\leq l(t,x,u)$ for all $t\in[0,\infty)$, $x\in\R^{\scriptscriptstyle N}$, $u\in\R^{\scriptscriptstyle N+1}$.
Thus,  (A4) is satisfied. Using the same arguments as in the proof of Theorem 5.6 in \cite{AM} we obtain (A5).
\end{proof}

\begin{Th}\label{rwwar}
Assume that $\tn{(h)}_H''$ and  $\tn{(OPC)}_H$ hold. Then, there exists a representation $(\B,f,l)$ of the Hamiltonian $H$, satisfying $\tn{(h)}''$ and  $\tn{(OPC)}$.
\end{Th}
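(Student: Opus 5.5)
The plan is to take the triple $(\B,f,l)$ produced by Theorem \ref{rep-t} and check the items of $\tn{(h)}''$ and $\tn{(OPC)}$ one at a time, using (A1)--(A5) together with Corollary \ref{wrow-wm}. We set $U(t):=\B$ for every $t$. This set-valued map is constant, hence measurable, with nonempty closed images. Theorem \ref{rep-t} shows that $f,l$ are measurable in $t$ and continuous in $(x,u)$, and (A4) gives the lower bound $l\geq\phi$ with $\phi\in L^1$; this is (h1). For (h2) we combine (A2) with (H3): $|f|+|l|\leq 20\lambda(t,x)\leq 20c(t)(1+|x|)$, so $20c\in L^1_{\rm loc}$ works. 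For (h5) we use $\lambda(t,x)\leq q(t)$ on $\mathrm{bd}\,A$, and $20q\in\mathscr{L}_{\rm loc}$ because $\theta_{20q}=20\theta_q$. Condition (h6) follows from (A1) with the constant $40(N+1)k(t)$, which again lies in $\mathscr{L}_{\rm loc}$.

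Conditions (h3) and (h4) come from the epigraphical inclusion (A3). Since $\G H^\ast(t,x,\cdot)\subset(f,l)(t,x,\B)\subset\E H^\ast(t,x,\cdot)$, we get
$$\{(f(t,x,u),l(t,x,u)+r)\mid u\in\B,\ r\geq0\}=\E H^\ast(t,x,\cdot).$$
The right-hand side is convex by (M2), which gives (h4). For (h3), the image $(f,l)(t,x,\B)$ is compact because $\B$ is compact and $(f,l)(t,x,\cdot)$ is continuous. The map $x\rightsquigarrow(f,l)(t,x,\B)$ is Hausdorff continuous, indeed Lipschitz, because by (A1)
$$\textit{d\!l}_\mathcal{H}\big((f,l)(t,x,\B),(f,l)(t,y,\B)\big)\leq\sup_{u\in\B}|(f,l)(t,x,u)-(f,l)(t,y,u)|\leq 40(N+1)k(t)|x-y|.$$
This bound uses only that the control set $\B$ does not depend on $x$.

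For $\tn{(OPC)}$, Theorem \ref{rep-t} gives $f(t,y,\B)=\D H^\ast(t,y,\cdot)$ for every $t$ and $y$. Hence $\tn{(OPC)}$ for $(\B,f,l)$ is literally the same statement as $\tn{(OPC)}_H$, with the same constants $\eta,r,M$.

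The argument contains no real obstacle once Theorem \ref{rep-t} is available. The points needing the most care are that $\G H^\ast\subset(f,l)(t,x,\B)$ is what upgrades the inclusion into an equality of epigraphs in (h4), and that (h3) needs closed images, which follows from compactness of $\B$.
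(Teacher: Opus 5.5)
Your proposal is correct and follows the paper's proof essentially step by step. You take $(\B,f,l)$ from Theorem \ref{rep-t} with $U\equiv\B$, read off (h1), (h2), (h5), (h6) from (A1), (A2), (A4) and (H3), obtain (h3) from compactness of $\B$ and the Hausdorff estimate via (A1), obtain (h4) from the epigraph equality implied by (A3), and transfer $\tn{(OPC)}_H$ through $f(t,x,\B)=\D H^{\ast}(t,x,\cdot)$; your explicit constants $20c$, $20q$, $40(N+1)k$ and the scaling invariance of $\mathscr{L}_{\mathrm{loc}}$ only make precise what the paper leaves implicit.
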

\begin{proof}
In view of Theorem \ref{rep-t}, there exist functions $f:[0,\infty)\times\R^{\scriptscriptstyle N}\times\R^{\scriptscriptstyle N+1}\to\R^{\scriptscriptstyle N}$ and $l:[0,\infty)\times\R^{\scriptscriptstyle N}\times\R^{\scriptscriptstyle N+1}\to\R$,  measurable in $\,t\,$ for all $(x,u)\in\R^{\scriptscriptstyle N}\times\R^{\scriptscriptstyle N+1}$ and continuous in $(x,u)$ for all $t\in[0,\infty)$, such that the triple $(\B,f,l)$ is a representation of $H$ and $f(t,x,\B)=\D H^{\ast}(t,x,\cdot)$ for all $t\in[0,\infty)$, $x\in\R^{\scriptscriptstyle N}$. Additionally, the conditions  (A1)-(A5) from\linebreak Theorem \ref{rep-t} are satisfied.  Let $U:[0,\infty)\rightsquigarrow\R^{\scriptscriptstyle N+1}$ be defined such that $U(\cdot)\equiv\B$ and $M=N+1$. Then, we show that the triple $(U,f,l)$ satisfies the conditions $\tn{(h)}''$ and  $\tn{(OPC)}$. We observe that (h1) follows directly from (A4) and the aforementioned properties of the functions  $f$ and $l$. Similarly, (h2) and (h5) are a direct consequence of (A2) and (H3). Since  $u\to (f,l)(t,x,u)$ is continuous  for all $t\in[0,\infty)$, $x\in\R^{\scriptscriptstyle N}$, and $\B$ is a compact set,\linebreak the set $(f,l)(t,x,\B)$ is compact for all $t\in[0,\infty)$, $x\in\R^{\scriptscriptstyle N}$. It is not difficult to show that
\begin{equation}\label{rwwar-1}
\textit{d\!l}_\mathcal{H}((f,l)(t,x,\B),(f,l)(t,y,\B))\leq\sup\nolimits_{u\in\B}\{\,|(f,l)(t,x,u)-(f,l)(t,y,u)|\,\}
\end{equation}
for all $t\in[0,\infty)$, $x\in\R^{\scriptscriptstyle N}$. In view of (A1) and \eqref{rwwar-1}, the set-valued map $x\rightsquigarrow (f,l)(t,x,\B)$ is continuous in the sense of the Hausdorff metric for all $t\in[0,\infty)$. Therefore,  (h3) holds.
We observe that (A3) implies
$$\G H^{\ast}(t,x,\cdot)+\{0\}\times[0,\infty)\subset(f,l)(t,x,\B)+\{0\}\times[0,\infty)\subset\E H^{\ast}(t,x,\cdot)+\{0\}\times[0,\infty).$$
Hence, $\{(f(t,x,u),l(t,x,u)+r)\mid u\in \B,\,r\geq 0\}=\E H^{\ast}(t,x,\cdot)$. Therefore, the convexity of the set $\{(f(t,x,u),l(t,x,u)+r)\mid u\in\B,\,r\geq 0\}$ is a consequence of the convexity of $\E H^{\ast}(t,x,\cdot)$, which means that condition (h4) is satisfied. We observe that (h6) follows directly from (A1).
Since $f(t,x,\B)=\D H^{\ast}(t,x,\cdot)$ for all $t\in[0,\infty)$, $x\in\R^{\scriptscriptstyle N}$, the condition (OPC) follows directly from the condition $\tn{(OPC)}_H$.
\end{proof}

\vspace{-4mm}
\pagebreak

\begin{Rem}\label{rem-bledy}
In \cite{B-F-2020,B-F-2022} the author constructs representations that share several structural features with ours and are based on techniques from \cite{AM,AM1}. In the $\text{(h)}''$ setting, such\linebreak representations may fail to satisfy the full set of properties, while the representation\linebreak proposed here does. The reasons are as follows:

The function $\omega$ in Proposition 4.6 (ii) in \cite{B-F-2022} is given by the formula:
\begin{align}
& \omega(t,x):=c(t)(1+|x|)+\varrho(t,x)+|H(t,x,0)|,\;\;\tn{where}\label{rem-defom-1}\\[-1mm]
& \varrho(t,x):=\max\{0,\sup\{H^{\ast}(t,x,v)\mid v\in\D H^{\ast}(t,x,\cdot)\}\}.\label{rem-defom-2}
\end{align}
In Example \ref{Ex3} below, we show that the epigraphical representation $(\B,f,l)$, as used in Proposition 4.6 of \cite{B-F-2022}, has discontinuous functions $f(t,\cdot,u)$ and $l(t,\cdot,u)$. Consequently,\linebreak the condition $\tn{(h)}''\tn{(h6)}$ fails, that is,
$$\sup\nolimits_{u\in\B}\big\{|f(t,x,u)-f(t,y,u)|+|l(t,x,u)-l(t,y,u)|\big\}\leq\, k(t)|x-y|.$$

The role of the auxiliary function $\varrho$ in \cite{B-F-2020} appears to be ambiguous: on page 376 it is defined by \eqref{rem-defom-2}, whereas elsewhere it is interpreted as a boundedness condition for the Legendre-Fenchel conjugate on its effective domain. Either interpretation leads to the same conclusion: the epigraphical representation constructed in \cite{B-F-2020} need not satisfy\linebreak the condition  $\tn{(h)}''\tn{(h5)}$, i.e.,
\begin{equation}\label{rem-inqfl}
\sup_{(x,u)\,\in\,\mathrm{bd}\,A\times\,\B}\big\{|f(t,x,u)|+|l(t,x,u)|\big\}\leq q(t),\;\;\textnormal{where}\;\;q\in\mathscr{L}_{\tn{loc}}.
\end{equation}
The failure of \eqref{rem-inqfl} is independent of the status of $\varrho$ and stems from the factor $c(t)(1+|x|)$ in the definition of $\omega$; see Example~\ref{Ex4}.
\end{Rem}

\begin{Ex}\label{Ex3}
Let us define the Hamiltonian $H:[0,\infty)\times\R\times\R\times\R\rightarrow\R$ by the formula:
\begin{equation*}
 H(t,x,p,q):=\max\{\,|p|\,|x|-q\,e^{-t},0\,\}+|p|.
\end{equation*}
The Legendre-Fenchel conjugate of $H(t,x,\cdot,q)$ with $q>0$ is given by\vspace{2mm}
\begin{align*}
&\;\;H^{\ast}(t,x,v,q)=\left\{
\begin{array}{ccl}
+\infty, & \tn{if} & |v|>|x|+1,\;x\not=0,\\
q\,\max\left\{\frac{\displaystyle |v|-1}{\displaystyle e^{t}\,|x|},\,0\right\}\!\!, & \tn{if} & |v|\leq |x|+1,\;x\not=0,
\\[3mm]
0, & \tn{if} & |v|\leq 1,\; x=0,\\[-0.5mm]
+\infty, & \tn{if} & |v|>1,\;x=0,
\end{array}
\right.\label{ex-th}\\[2mm]
&\;\;\D H^{\ast}(t,x,\cdot,q)=[-|x|-1,\,|x|+1]\;\tn{for all}\; t\in[0,\infty),\, x\in\R,\,q>0.\nonumber
\end{align*}

\noindent It is not difficult to verify that  $H$ defined above, together with  $\Omega:=(-\infty,0]$, satisfies all assumptions H.1-2 and C.1-2 from \cite{B-F-2022}. We use the notation  $H(t,x,p)$ for $H(t,x,p,1)$.  The function $\varrho(t,x)$ given by \eqref{rem-defom-2} with 
$H^{\ast}(t,x,v)=H^{\ast}(t,x,v,1)$ can be easily calculated
\begin{equation*}
\varrho(t,x)
=\left\{
\begin{array}{ccl}
0, & \tn{if} & x=0,\\
e^{-t}\!, & \tn{if} & x\not=0.
\end{array}
\right.
\end{equation*}

We show that the epigraphical representation $(\B,f,l)$ of the Hamiltonian $H(t,x,p,1)$, as used in Proposition 4.6 of \cite{B-F-2022}, has discontinuous functions $f(t,\cdot,u)$ and $l(t,\cdot,u)$. Notably, Proposition 4.6 of \cite{B-F-2022} refers to Theorem 4.1 of \cite{B-F-2020}, where the representation is constructed. The representation in \cite{B-F-2022} is given by \eqref{rep-t-01} and satisfies the following property, which follows from \eqref{rep-t-02}:
\begin{equation}\label{peop-eprrep0}
(f,l)(t,x,u)=\omega(t,x)u\;\;\textnormal{for all}\;\;\omega(t,x)u\in\E H^{\ast}(t,x,\cdot,1),
\end{equation}
where $\omega(t,x)$ is defined by \eqref{rem-defom-1}, that is, 
$\omega(t,x)=c(t)(1+|x|)+\varrho(t,x)+|H(t,x,0,1)|$.\linebreak Without loss of generality, we can assume that $c(t)\equiv1$. We observe that $H(t,x,0,1)=0$. Thus, $\omega(t,x)=1+|x|+\varrho(t,x)$. So, $\omega(t,x)\leq 3$ for all $t\in[0,\infty)$, $x\in[-1,1]$.  In consequence, $\omega(t,x)/3\in[-1,1]$ for all $t\in[0,\infty)$, $x\in[-1,1]$. Let $u:=(1/3,1/3)$. Since $H^*(t,x,p,1)=0$ for all $t\in[0,\infty)$, $x\in\R$, $p\in[-1,1]$, we have
$\omega(t,x)u\in\E H^{\ast}(t,x,\cdot,1)$ for all $t\in[0,\infty)$ and $x\in[-1,1]$. In view of \eqref{peop-eprrep0}, for $u=(1/3,1/3)$ we obtain
$$(f,l)(t,x,u)=\omega(t,x)u\;\;\textnormal{for all}\;\;t\in[0,\infty),\;x\in[-1,1].$$
Therefore, for $u=(1/3,1/3)$ we have
\begin{equation*}
f(t,x,u)=l(t,x,u)=
\left\{
\begin{array}{lcl}
\frac{1}{3}, & \textnormal{if} & x=0,\\[2mm]
\frac{1}{3}+\frac{1}{3}|x|+\frac{1}{3}e^{-t}\!\!, & \textnormal{if} & x\in[-1,0)\cup(0,1].
\end{array}
\right.
\end{equation*}
So, the functions $f(t,\cdot,u)$ and $l(t,\cdot,u)$ are discontinuous.
\end{Ex}

\begin{Ex}\label{Ex4}
Let us define the Hamiltonian $H:[0,\infty)\times\R^2\times\R^2\to\R$ by the formula:
$$H(t,x_1,x_2,p_1,p_2)=|(p_1,p_2)|.$$
The Legendre-Fenchel conjugate of $H(t,x_1,x_2,\cdot,\cdot)$ is given by
\begin{equation*}
H^*(t,x_1,x_2,v_1,v_2)
=\left\{
\begin{array}{ccl}
0, & \textnormal{if} & |(v_1,v_2)|\leq 1,\\
+\infty, & \textnormal{if} & \textnormal{otherwise}.
\end{array}
\right.
\end{equation*}
It is not difficult to verify that $H$ defined above, together with  $\Omega:=(-\infty,0]\times\R$, satisfies all assumptions H.2.1-6 and OPC in \cite{B-F-2020}. In the present paper, we denote the set $\Omega$ by  $A$.

We show that the epigraphical representation $(\B,\!f\!,l)$ of this Hamiltonian $H$, constructed as in Theorem 4.1 of \cite{B-F-2020}, does not satisfy \eqref{rem-inqfl}. The representation in \cite{B-F-2020} is given by \eqref{rep-t-01} and satisfies the following property, which follows from \eqref{rep-t-02}:
\begin{equation}\label{peop-eprrep00}
(f,l)(t,x,u)=\omega(t,x)u\;\;\textnormal{for all}\;\;\omega(t,x)u\in \E H^{\ast}(t,x,\cdot),
\end{equation}
Let $(u_1,u_2,u_3):=(0,0,1)$. Then $\omega(t,x_1,x_2)(u_1,u_2,u_3)\in \E H^{\ast}(t,x_1,x_2,\cdot,\cdot)$. By \eqref{peop-eprrep00}, 
\begin{equation}\label{wzor}
(f,l)(t,x_1,x_2,0,0,1)=(0,0,\omega(t,x_1,x_2))\;\;\textnormal{for all}\;\;t\in[0,\infty),\;x_1,x_2\in\R.
\end{equation}
Of course, $H(t,x_1,x_2,0,0)=0$.  Without loss of generality, we can assume that $c(t)\equiv1$. However, for the function $\varrho(t,x_1,x_2)$ we only require that it is nonnegative. Then, the function $\omega(t,x_1,x_2)$ defined as in \eqref{rem-defom-1} takes the form
\begin{equation}\label{wzor-gamma}
\omega(t,x_1,x_2)=1+|(x_1,x_2)|+\varrho(t,x_1,x_2).
\end{equation}
We assume, by contradiction, that the functions $f$ and $l$ constructed above, together with the given set  $A=\Omega$, satisfy condition \eqref{rem-inqfl}. Therefore, by \eqref{wzor} and \eqref{wzor-gamma}, we obtain
\begin{equation}\label{ineq-1}
1+|(x_1,x_2)|+\varrho(t,x_1,x_2)\;\leq\; q(t)\;\;\textnormal{for all}\;\;(x_1,x_2)\in\mathrm{bd}\,A,\;t\in[0,\infty).
\end{equation}
We observe that $\mathrm{bd}\,A=\{0\}\times\R$. Let $(x_1,x_2):=(0,i)\in\mathrm{bd}\,A$, where $i\in\N$. Then, by \eqref{ineq-1}, we have $i\leq q(t)$ for all $i\in\N$ and $t\in[0,\infty)$. Passing to the limit as  $i\to\infty$ we obtain the contradiction $q\equiv+\infty$. Consequently, the functions $f$ and $l$ constructed above, together with the given set  $A=\Omega$, do not satisfy  \eqref{rem-inqfl}. As can be easily observed, this is due to the factor $c(t)(1+|x|)$ present in the definition of $\omega$ (see \eqref{rem-defom-1}).
\end{Ex}

Now we present an example consisting of a Hamiltonian $H$ and a set $A$ that satisfy $\text{(h)}''$ and $\tn{(OPC)}_H$. Moreover, we provide an explicit epigraphical representation and show that no regular graphical representation exists. This epigraphical representation, in turn, allows us to demonstrate in the next section (see Remark \ref{rmwb}) that the condition\linebreak $\tn{(B)}_H$ -- defined there -- is a weaker version of condition (B) in \cite{B-F-2019}.

\begin{Ex}\label{Ex1}
Let us define the Hamiltonian $H:[0,\infty)\times\R\times\R\rightarrow\R$ by the formula:
\begin{equation*}
H(t,x,p):=\max\{\,\alpha(t)\,|p|\,|x|-\alpha(t)\,e^{-\gamma t},0\,\}+|p|,
\end{equation*}
where $\alpha(t)\in L^\infty([0,\infty);\R^+)$ and $\gamma\in\R^+$.  Moreover, we have
\begin{align*}
& H^{\ast}(t,x,v)=\left\{
\begin{array}{ccl}
+\infty, & \tn{if} & |v|>\alpha(t)\,|x|+1,\;x\not=0,\\
\max\left\{\frac{\displaystyle |v|-1}{\displaystyle e^{\gamma t}\,|x|},\,0\right\}\!\!, & \tn{if} & |v|\leq \alpha(t)\,|x|+1,\;x\not=0, \\
0, & \tn{if} & |v|\leq 1,\; x=0,\\[-1mm]
+\infty, & \tn{if} & |v|>1,\;x=0,
\end{array}
\right.\\
& \D H^{\ast}(t,x,\cdot)=[-\alpha(t)\,|x|-1,\,\alpha(t)\,|x|+1]\;\tn{for all}\; t\in[0,\infty),\, x\in\R.\\[-6mm]
\end{align*}
 This Hamiltonian satisfies $\tn{(h)}_H''$ and $\tn{(OPC)}_H$  with $A\!=\!(-\infty,0]$ and $\lambda(t,x)\!=\!\alpha(t)\,|x|\!+\!\alpha(t)\!+\!1$.

To obtain a representation $(U,f,l)$ of the Hamiltonian we set: $U\equiv[-1,1]$ and $$f:[0,\infty)\times\R\times\R\to\R,\quad l:[0,\infty)\times\R\times\R\to\R$$  are given by \vspace{-3mm}
\begin{align*}
 f(t,x,u) &=\left\{
\begin{array}{ccl}
-\alpha(t)\,|x|-1, & \tn{if} & u\in(-\infty,-1], \\
\alpha(t)\,|x|\,(2u+1)-1, & \tn{if} & u\in[-1,-1/2], \\
2\,u, & \tn{if} & u\in[-1/2,1/2],\\
\alpha(t)\,|x|\,(2u-1)+1, & \tn{if} & u\in[1/2,1],\\
\alpha(t)\,|x|+1, & \tn{if} & u\in[1,\infty),
\end{array}
\right.\\[0mm]
l(t,x,u) &=\left\{
\begin{array}{ccl}
\alpha(t)\,e^{-\gamma t}, & \tn{if} & u\in(-\infty,-1], \\
-\alpha(t)\,(2u+1)\,e^{-\gamma t}, & \tn{if} & u\in[-1,-1/2], \\
0, & \tn{if} & u\in[-1/2,1/2],\\
\alpha(t)\,(2u-1)\,e^{-\gamma t}, & \tn{if} & u\in[1/2,1],\\
\alpha(t)\,e^{-\gamma t}, & \tn{if} & u\in[1,\infty).
\end{array}
\right.
\end{align*}

\noindent  The triple $([-1,1],f,l)$ is an epigraphical representation of  $H$
but it is not a graphical\linebreak representation. Indeed, a graphical representation  for the Hamiltonian  in this example with continuous functions $(x,u)\to f(t,x,u)$ and $(x,u)\to l(t,x,u)$ does not exist. To see this let us assume, by a contradiction, that such a representation   $(\B,f,l)$ exists. Let $x_n\!=\!1/n$, $v_n\!=\!(\alpha(t)/n)\!+\!1$. Observe that $H^{\ast}(t,x_n,v_n)=\alpha(t)\,e^{-\gamma t}\!\!$. Hence $(v_n,\alpha(t)\,e^{-\gamma t})\in\G H^{\ast}(t,x_n,\cdot)$. Since $(\B,f,l)$ is a graphical representation, there exists $u_n\in\B$ such that $f(t,x_n,u_n)=v_n$ and $l(t,x_n,u_n)=\alpha(t)\,e^{-\gamma t}$. Since the set $\B$ is compact, there exists a subsequence (which we do not relabel) $\{u_n\}$ convergent to  $u\in\B$. Passing to the limit as $n\to\infty$ we have $f(t,0,u)\!=\!1$ and $l(t,0,u)\!=\!\alpha(t)\,e^{-\gamma t}\!\!$. Thus,  we get $(1,\alpha(t)\,e^{-\gamma t})\!\in\!\G H^{\ast}(t,0,\cdot)\!=\![-1,1]\!\times\!\{0\}$.\linebreak Therefore, $\alpha(t)\,e^{-\gamma t}=0$, in contradiction to  $\alpha(t)\,e^{-\gamma t}>0$.
\end{Ex}

\section{Hamilton-Jacobi-Bellman equation}
\noindent In the section we investigate the issues of existence, uniqueness and representation for   \eqref{eqhjb}. The problem has been solved  in by Basco-Frankowska (see Thorem 3.3 in \cite{B-F-2019}). But the result of Basco-Frankowska has been obtained for  H-J-B equations related to control problem given by a triple $(U,f,l)$. The assumptions of Theorem 3.3 in \cite{B-F-2019} has been formulated as properties of the dynamics $f$ and the  running cost $l$. In Theorem \ref{eaus-h} we propose an analog to the Basco-Frankowska result but we formulate all assumptions as properties of the Hamiltonian $H(t,x,p)$. The base entity of our consideration is the Hamiltonian $H(t,x,p)$ and its representation given by a triple $(U,f,l)$ is a secondary object.

We propose the following definition of a weak solution to \eqref{eqhjb_0}:
\begin{Def}\label{dws1}
 A lower semicontinuous function $W:[0,\infty)\times A\to\R\cup\{+\infty\}$ is called a weak solution of  \eqref{eqhjb_0}  on  $(0,\infty)\times A$ if there exists a set $C\subset(0,\infty)$ of full measure such that for all $(t,x)\in\D W\cap(C\times\mathrm{bd}\,A)$ one has
\begin{equation*}
\begin{array}{rl}
-p_t+H(t,x,-p_x)\geq 0,&\;\;\forall\,(p_t,p_x)\in \partial W(t,x),\\[1mm]
-p_t+\sup\limits_{v\,\in\,\D H^{\ast}(t,x,\cdot)}\langle v,-p_x\rangle\geq 0,&\;\;\forall\,(p_t,p_x,0)\in N_{\E W}(t,x,W(t,x)),
\end{array}
\end{equation*}
and for all $(t,x)\in\D W\cap(C\times\mathrm{int}\,A)$ one has
\begin{equation*}
\begin{array}{rl}
-p_t+H(t,x,-p_x)=0,&\;\;\forall\,(p_t,p_x)\in \partial W(t,x),\\[1mm]
-p_t+\sup\limits_{v\,\in\,\D H^{\ast}(t,x,\cdot)}\langle v,-p_x\rangle=0,&\;\;\forall\,(p_t,p_x,0)\in N_{\E W}(t,x,W(t,x)).
\end{array}
\end{equation*}
\end{Def}
Using methods presented in Section \ref{sec-rch} we can find a triple $(U,f,l)$ that is a representation (or epigraphical representation) of the given Hamiltonian $H(t,x,p)$. Then we can define the augmented Hamiltonian $\bar{H}:[0,\infty)\times \R^{\scriptscriptstyle N}\times \R^{\scriptscriptstyle N}\times \R\to \R$ by
$$\bar{H}(t,x,p,q)=\sup\nolimits_{\,u\,\in\, U(t)}\{\,\langle\, p,f(t,x,u)\,\rangle-q\,l(t,x,u)\,\}.$$ We recall the definition of a weak solution to the H-J-B equation used in \cite{B-F-2019}:

\begin{Def}\label{dws2}
A lower semicontinuous function $W:[0,\infty)\times A\to\R\cup\{+\infty\}$ is called a weak solution of  \eqref{eqhjb_0}  on  $(0,\infty)\times A$ if there exists a set $C\subset(0,\infty)$ of full measure such that for all $(t,x)\in\D W\cap(C\times\mathrm{bd}\,A)$ one has
$$-p_t+\bar{H}(t,x,-p_x,-q)\geq 0,\;\;\forall\,(p_t,p_x,q)\in N_{\E W}(t,x,W(t,x)),$$
and for all $(t,x)\in\D W\cap(C\times\mathrm{int}\,A)$ one has
$$-p_t+\bar{H}(t,x,-p_x,-q)=0,\;\;\forall\,(p_t,p_x,q)\in N_{\E W}(t,x,W(t,x)).$$
\end{Def}
Definitions \ref{dws1} and  \ref{dws2} are equivalent. This is a consequence of  classical properties of nonsmooth analysis tools.  First, the regular normal cone property states that $q \leq 0$ for all $(p_t, p_x, q) \in N_{\E W}(t, x, W(t, x))$. Second, there is a direct connection between the regular normal cone and the subdifferential: $(p_t, p_x, -1) \in N_{\E W}(t, x, W(t, x))$ if and only if $(p_t, p_x) \in \partial W(t, x)$. Lastly, the equation $\D H^*(t, x, \cdot) = f(t, x, U(t))$ is confirmed by Lemma~\ref{rep-rde}. By this arguments we obtain that:

\begin{Prop}\label{eqw-defsol}
Let a triple $(\,U,f,l\,)$ be a representation of $H$  satisfying $\tn{(h)}''$. Then $W$ is a weak solution of  \eqref{eqhjb_0}  in the sense of Definition \ref{dws1} if and only if  $W$ is a weak solution of  \eqref{eqhjb_0}  in the sense of Definition \ref{dws2}.
\end{Prop}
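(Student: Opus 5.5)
The plan is to show that, at each fixed point $(t,x)\in\D W\cap(C\times A)$, the two definitions impose the same conditions, normal vector by normal vector. Both definitions use the same set $C$ of full measure and the same split into $\mathrm{bd}\,A$ and $\mathrm{int}\,A$, so nothing else needs to be compared. Fix such $(t,x)$ and take $(p_t,p_x,q)\in N_{\E W}(t,x,W(t,x))$. As recalled in Section~\ref{sec-prel}, the definition of the regular normal cone forces $q\leq 0$. I will therefore split into the cases $q<0$ and $q=0$.

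\emph{Case $q<0$.} Since $N_{\E W}$ is a cone, I rescale by $|q|$ and may assume $q=-1$. By \cite[Thm. 8.9]{R-W}, $(p_t,p_x,-1)\in N_{\E W}(t,x,W(t,x))$ holds exactly when $(p_t,p_x)\in\partial W(t,x)$. Because $(U,f,l)$ represents $H$, formula \eqref{def-rh0} gives
\[
\bar H(t,x,-p_x,1)=\sup\nolimits_{u\in U(t)}\{\langle -p_x,f(t,x,u)\rangle-l(t,x,u)\}=H(t,x,-p_x).
\]
The map $p\mapsto\bar H(t,x,p,q)$ is positively homogeneous of degree one in $(p,q)$. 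Hence the inequality or equality $-p_t+\bar H(t,x,-p_x,-q)\geq0$ (resp. $=0$) for the original normal vector is equivalent to the same relation for the rescaled vector. That relation is precisely the subdifferential condition of Definition~\ref{dws1}. Conversely, every $(p_t,p_x)\in\partial W(t,x)$ gives a normal vector of this form.

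\emph{Case $q=0$.} Here $\bar H(t,x,-p_x,0)=\sup_{v\in f(t,x,U(t))}\langle v,-p_x\rangle$. To match the second line of Definition~\ref{dws1}, I need $f(t,x,U(t))=\D H^{\ast}(t,x,\cdot)$. This follows from Lemma~\ref{rep-rde}, whose hypotheses I must verify from $\tn{(h)}''$. First, by (h2) and (h3) the set $(f,l)(t,x,U(t))$ is closed and bounded, hence compact. By (h4) the set $\Gamma(t,x)=(f,l)(t,x,U(t))+\{0\}\times[0,\infty)$ is convex. Its projection onto the first $N$ coordinates is $f(t,x,U(t))$, which is therefore compact and convex. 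This is exactly the argument already used in the proof of Theorem~\ref{rwwar-i}. Also, $H(t,x,\cdot)$ is real-valued and convex by (h2). With the equality of domains in hand, the two conditions coincide for normals with $q=0$.

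Combining the two cases, the conditions on $\mathrm{bd}\,A$ (inequalities) and on $\mathrm{int}\,A$ (equalities) match term by term, which gives the equivalence. The only step that is not pure bookkeeping is the equality of domains in the case $q=0$, that is, checking the closedness and convexity hypotheses of Lemma~\ref{rep-rde}; I expect this to be the main obstacle, though it is handled by the compactness argument above.
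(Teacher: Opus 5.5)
Your proposal is correct and follows the same route as the paper. You split normals by the sign of $q$ (using $q\leq 0$), rescale to $q=-1$ and apply the subdifferential/normal-cone correspondence from \cite[Thm. 8.9]{R-W}, and for $q=0$ use Lemma \ref{rep-rde} to get $\D H^{\ast}(t,x,\cdot)=f(t,x,U(t))$. Your check of that lemma's hypotheses via (h2)--(h4) is the argument from the proof of Theorem \ref{rwwar-i}, which the paper leaves implicit here.
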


\vspace{-1mm}
We recall the main result of \cite{B-F-2019} that states the uniqueness and existence of a weak solution to the H-J-B equation subject to the vanishing at infinity condition \eqref{eqhjb}.

\vspace{-1mm}
\begin{Th}[\tn{\cite[Thm. 3.3]{B-F-2019}}]\label{eaus-fl}
Assume that a triple $(U,f,l)$ satisfies $\tn{(h)}''\!\!$ and $\tn{(OPC)}$. Let $W:[0,\infty)\times A\to\R\cup\{+\infty\}$ be a lower semicontinuous function such that $\D \mathcal{V}(t,\cdot)\subset\D W(t,\cdot)\neq\emptyset$  for all large $t>0$ and
\begin{equation}\label{ic-hjb-2}
\lim\nolimits_{\,t\to\infty}\sup\nolimits_{\,x\,\in\,\D W(t,\,\cdot\,)}|W(t,x)|=0.
\end{equation}
Then the following statements are equivalent:
\begin{enumerate}
    \item[\tn{\bf{(i)}}] $W=\mathcal{V}$, where $\mathcal{V}$ is the value function given by \eqref{def-vf2}\tn{;}
    \item[\tn{\bf{(ii)}}] $W$ is a weak solution of  \eqref{eqhjb} , and  $t\rightsquigarrow\E W(t,\cdot)$ is locally absolutely continuous.
\end{enumerate}
Moreover, if in addition the following condition
\begin{enumerate}
\item[$\pmb{\tn{(B):}}$] $\D\mathcal{V}\not=\emptyset$ and there exist $T>0$ and $\psi\in L^1([T,\infty);[0,\infty))$ such that for all\linebreak $(t_0,x_0)\in \D \mathcal{V}\cap[T,\infty)\times\R^{\scriptscriptstyle N}$ and any $(x,u)(\cdot)\in S_{\!\!f\,}(t_0,x_0)$  one has $|l(t,x(t),u(t))|\leq\psi(t)$ for almost all $t\in[t_0,\infty)$
\end{enumerate}
holds true, then  $\mathcal{V}$ is the unique weak solution of \eqref{eqhjb}  satisfying \eqref{ic-hjb-2} with locally absolutely continuous $t\rightsquigarrow\E\mathcal{V}(t,\cdot)$.
\end{Th}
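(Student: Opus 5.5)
Since this theorem is quoted from \cite[Thm. 3.3]{B-F-2019}, the plan is to recall how its proof is organised there. The approach is viability-theoretic: everything is reduced to statements about the epigraph of $W(t,\cdot)$ under the augmented dynamics
$$(t,x,r)\;\rightsquigarrow\;\{1\}\times\{(f(t,x,u),-l(t,x,u)):u\in U(t)\},$$
with state constraint $A$.

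For the direction (i)$\Rightarrow$(ii), I would proceed as follows.
\begin{enumerate}
\item Establish existence of optimal trajectory-control pairs for \eqref{def-vf2}. Use the convexity assumption (h4), the linear growth (h2) and Filippov/Gronwall compactness of trajectories. The lower bound $l\geq\phi\in L^1$ from (h1) makes the tails of the cost uniformly controlled.
\item Deduce that $\mathcal V$ is lower semicontinuous and satisfies the dynamic programming principle.
\item Read the DPP as forward weak viability of $\E\mathcal V$ for the augmented system, and as backward invariance of $\E\mathcal V$ at points with $x\in\mathrm{int}\,A$.
\item Convert viability into the inequality $-p_t+\bar H(t,x,-p_x,-q)\geq0$ on $\D\mathcal V$, and backward invariance into the reverse inequality on the interior. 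This uses the standard tangential/normal characterisations on the regular normal cone $N_{\E\mathcal V}$, which is exactly Definition \ref{dws2}.
\item Prove local absolute continuity of $t\rightsquigarrow\E\mathcal V(t,\cdot)$. This comes from a neighbouring-feasible-trajectory (NFT) estimate. Given a trajectory of the unconstrained system that violates $A$ slightly on $[t_0,t_1]$, (OPC) provides a feasible trajectory whose $W^{1,1}$-distance is bounded by a constant times the violation. The constants are controlled through (h5)--(h6) with $q,k\in\mathscr L_{\mathrm{loc}}$; the moduli $\theta_q,\theta_k$ supply the uniform smallness on short intervals that absolute continuity requires.
\end{enumerate}

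For (ii)$\Rightarrow$(i), I would take a weak solution $W$ with absolutely continuous epigraph and the vanishing condition \eqref{ic-hjb-2}.
\begin{enumerate}
\item The boundary and interior supersolution inequalities, together with the absolute continuity in $t$, give forward viability of $\E W$. Integrating along a viable trajectory from $(t_0,x_0)$ to a large time $T$ yields $W(t_0,x_0)\geq\int_{t_0}^T l\,dt+W(T,x(T))$. Letting $T\to\infty$ and using \eqref{ic-hjb-2} gives $W\geq\mathcal V$.
\item The interior equality yields backward invariance of $\E W$ in $\mathrm{int}\,A$. To extend this to arbitrary feasible trajectories, which may touch $\mathrm{bd}\,A$, I would again use (OPC) and the NFT estimate to push them into the interior with small cost error. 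This gives $W(t_0,x_0)\leq\int_{t_0}^T l+W(T,x(T))$ along every feasible pair.
\item The hypothesis $\D\mathcal V(t,\cdot)\subset\D W(t,\cdot)$ together with \eqref{ic-hjb-2} then lets $T\to\infty$, so $W\leq\mathcal V$.
\end{enumerate}

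For the final statement, assumption (B) gives $|\mathcal V(t,x)|\leq\int_t^\infty\psi\to0$ uniformly on $\D\mathcal V(t,\cdot)$. Hence $\mathcal V$ itself satisfies \eqref{ic-hjb-2}, and the inclusion $\D\mathcal V\subset\D\mathcal V$ is trivial. The equivalence (i)$\Leftrightarrow$(ii) then shows that $\mathcal V$ is a weak solution with absolutely continuous epigraph and is the only one.

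The main obstacle is the NFT estimate under (OPC) with only $\mathscr L_{\mathrm{loc}}$ (rather than bounded) data, since it drives both the absolute continuity and the boundary approximation. I would first construct the feasible correction locally: on each short interval, switch to the inward velocity $w$ supplied by (OPC), with $|w-v|\leq M$ and inward normal component at least $r$. I would then glue the local pieces using $\theta_q(\sigma)\to0$ to keep the number of switches and the accumulated error controlled.
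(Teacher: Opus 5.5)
The paper does not prove this statement. It is quoted from \cite[Thm.~3.3]{B-F-2019} and used as a black box; the paper's own work is to build a triple $(\B,f,l)$ satisfying $\tn{(h)}''$ and $\tn{(OPC)}$ (Theorem~\ref{rwwar}) and to identify $V$ with $\mathcal{V}$ (Theorem~\ref{eqw-fws}). Your outline is therefore a reconstruction of the Basco--Frankowska argument, and its architecture is sound. The inequality $-p_t+\bar H(t,x,-p_x,-q)\geq 0$ amounts to viability of $\E W$ under $(1,f,-l)$. The interior equality amounts to invariance under the reversed dynamics $(-1,-f,l)$. Your two passages $T\to\infty$ are correct: they use \eqref{ic-hjb-2} and, for $W\leq\mathcal{V}$, the inclusion $\D\mathcal{V}(T,\cdot)\subset\D W(T,\cdot)$ at the endpoint of a finite-cost feasible pair.

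\textbf{The gap.} The step you single out as the main obstacle is set up in the wrong time direction.

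\emph{What $\tn{(OPC)}$ says.} It is an \emph{outward} pointing condition. The elements of $\textit{I\!N}^1_{y,\eta}$ are unit vectors in the convexified limiting normal cones, i.e.\ outward normals, so $\langle n,w\rangle\geq r$ means that $w$ points \emph{out of} $A$. You therefore cannot ``switch to the inward velocity $w$'' in forward time. Note also that $\tn{(OPC)}$ constrains $w-v$ as well as $w$, which your sketch does not use.

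\emph{Why a forward estimate fails.} No forward neighbouring-feasible-trajectory estimate (initial point fixed) follows from $\tn{(h)}''$ and $\tn{(OPC)}$. Take $N=1$, $A=(-\infty,0]$, $U\equiv\{0\}$, $f\equiv 1$, $l\equiv 0$. Every velocity satisfies $\langle 1,v\rangle=1>0$, so $\tn{(OPC)}$ holds vacuously. Yet the trajectory from $x_0=0$ leaves $A$ by only $\tau$ on $[0,\tau]$, while no feasible trajectory from $0$ exists at all.

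\emph{What $\tn{(OPC)}$ actually yields.} It gives the estimate for $-f$, for which $-w$ points inward. A trajectory on $[s,t]$ with its \emph{terminal} point $x(t)\in A$ fixed can be corrected into a feasible one, and even into one lying in $\mathrm{int}\,A$ on $[s,t)$. This backward version is exactly what both of your applications require.
\begin{itemize}
\item For the absolute continuity of $t\rightsquigarrow\E\mathcal{V}(t,\cdot)$: moving $(x,r)\in\E\mathcal{V}(t,\cdot)$ to a later time needs no correction, since you can follow an optimal trajectory and use the DPP. Moving it to an earlier time $s<t$ needs a feasible trajectory on $[s,t]$ ending at $x$, which is the backward estimate.
\item In (ii)$\Rightarrow$(i): approximate a feasible pair on $[t_0,T]$ by pairs ending at $x(T)$ and lying in $\mathrm{int}\,A$ on $[t_0,T)$. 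Then propagate $W(T,x(T))$ backward using the invariance, and conclude via the lower semicontinuity of $W$ at $(t_0,x_0)$.
\end{itemize}
With the time direction reversed your plan goes through; as written, its central construction fails.

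\textbf{A smaller omission.} To apply the equivalence with $W=\mathcal{V}$ under (B), you must also check that $\D\mathcal{V}(t,\cdot)\neq\emptyset$ for all large $t$. This follows from $\D\mathcal{V}\neq\emptyset$: along a finite-cost feasible pair from $(s_0,y_0)\in\D\mathcal{V}$ one has $\mathcal{V}(t,x(t))\leq\int_t^\infty l\,dt<+\infty$ for all $t\geq s_0$, as in the proof of Proposition~\ref{pfv-cvp}.
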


\vspace{-1mm}
The solution of the H-J-B equation may be represented by the value function of a corresponding calculus of variation problem that is defined in the following Proposition.

\vspace{-1mm}
\begin{Prop}\label{pfv-cvp}
Assume that a Hamiltonian $H$ satisfies $\tn{(h)}''_H$. Then the value function $V:[0,\infty)\times A\to\R\cup\{+\infty\}$  defined for the calculus of variations problem by the formula
\begin{equation}\label{def-vf1}
V(t_0,x_0)=\inf_{x(\cdot)\,\in\, S_{\!\!H}(t_0,x_0)}\int_{t_0}^{\infty}H^{\ast}(t,x(t),\dot{x}(t))\,dt,
\end{equation}
where $S_{\!\!H}(t_0,x_0)$ denotes the set of all trajectories such that
\begin{equation}\label{def-vf1-svs}
\left\{\begin{array}{l}
\dot{x}(t)\in\D H^{\ast}(t,x(t),\cdot),\;\;\tn{a.e.}
\;\;t\in[t_0,\infty), \\
x(t_0)=x_0,\quad x([t_0,\infty))\subset A,
\end{array}\right.
\end{equation}
is well-defined, lower semicontinuous, and for every $(t_0,x_0)\in \D V$ there exists an\linebreak optimal trajectory $\bar{x}(\cdot)$ of $V$ at $(t_0,x_0)$. Moreover, if in addition the following condition
\begin{enumerate}[leftmargin=16mm]
\item[$\pmb{\tn{(B)}_H:}$] $\D V\not=\emptyset$ and there exist $T>0$ and $\psi\in L^1([T,\infty);[0,\infty))$ such that for all $(t_0,x_0)\in \D V\cap[T,\infty)\times\R^{\scriptscriptstyle N}$ and any $x(\cdot)\in S_{\!\!H}(t_0,x_0)$  one has $|H^{\ast}(t,x(t),\dot{x}(t))|\leq\psi(t)$ for almost all $t\in[t_0,\infty)$
\end{enumerate}
holds true, then \vspace{-1mm}
\begin{equation}\label{pfv-bc}
\lim\nolimits_{\,t\to\infty}\sup\nolimits_{\,x\,\in\,\D V(t,\,\cdot\,)}|V(t,x)|=0.
\end{equation}
\end{Prop}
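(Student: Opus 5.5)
The plan is to reduce everything to the control problem, using the representation from Theorem \ref{rwwar} (more precisely Theorem \ref{rep-t}). By Theorem \ref{rep-t}, there exist $f,l$ such that $(\B,f,l)$ is an epigraphical representation of $H$ satisfying (h)$''$, with $f(t,x,\B)=\D H^{\ast}(t,x,\cdot)$ and $H^{\ast}(t,x,f(t,x,u))\leq l(t,x,u)$. Let $\mathcal{V}$ be the value function \eqref{def-vf2} of this control problem with $U\equiv\B$. The first step is to show that $V=\mathcal{V}$.

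For the inequality $V\leq\mathcal{V}$, I take any pair $(x,u)(\cdot)\in S_{\!f}(t_0,x_0)$. Since $\dot x(t)=f(t,x(t),u(t))\in\D H^{\ast}(t,x(t),\cdot)$, we have $x(\cdot)\in S_{\!H}(t_0,x_0)$. Moreover $H^{\ast}(t,x,\dot x)\leq l(t,x,u)$ holds pointwise, so $V\leq\mathcal{V}$.

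For the reverse inequality, I take $x(\cdot)\in S_{\!H}(t_0,x_0)$ with finite cost. Since $(\dot x(t),H^{\ast}(t,x(t),\dot x(t)))\in\G H^{\ast}(t,x(t),\cdot)\subset(f,l)(t,x(t),\B)$ by (A3), a measurable selection theorem applies: Filippov's lemma, as in \cite[Thm. 8.2.10]{A-F}, using that $f,l$ are Carathéodory. This gives a measurable $u(t)\in\B$ with $(f,l)(t,x(t),u(t))=(\dot x(t),H^{\ast}(t,x(t),\dot x(t)))$, so the costs coincide and $\mathcal{V}\leq V$. One point needs care here: measurability of $t\mapsto H^{\ast}(t,x(t),\dot x(t))$. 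I will obtain it from $H^{\ast}$ being a normal integrand; this follows from (H1) via \cite[Ch. 14]{R-W}. The integrals are well defined in $\R\cup\{+\infty\}$ because $H^{\ast}\geq -H(t,x,0)\geq\phi(t)$ with $\phi\in L^1$ by (H2).

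Once $V=\mathcal{V}$ is established, well-definedness (values in $\R\cup\{+\infty\}$) follows from the lower bound $\phi\in L^1$. Lower semicontinuity and existence of optimal trajectories for $\mathcal{V}$ are standard for a triple satisfying (h)$''$, via the convexity (h4), growth bounds and compactness of trajectories. I would argue this directly for $V$ to avoid citing results outside the paper. Take a minimizing sequence $x_n(\cdot)$ with $(t_n,x_n(t_n))\to(t_0,x_0)$. By (M10) and (H3) we have $|\dot x_n|\leq\lambda\leq c(t)(1+|x_n|)$, so Gronwall gives local uniform bounds. Arzelà–Ascoli together with a diagonal argument on $[t_0,T]$ for $T\to\infty$ then yields $x_n\to\bar x$ locally uniformly with $\dot x_n\rightharpoonup\dot{\bar x}$ weakly in $L^1_{\mathrm{loc}}$. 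The constraint $\bar x\subset A$ holds since $A$ is closed. Lower semicontinuity of the integral functional along this convergence follows from convexity of $H^{\ast}(t,x,\cdot)$ and lower semicontinuity of $(x,v)\mapsto H^{\ast}(t,x,v)$. The latter holds because $E(t,\cdot)$ has closed graph (M4). The tool is the standard convergence theorem of Aubin–Cellina/Ioffe type applied to epigraphs, i.e.\ to $(\dot x_n, H^{\ast})\in E(t,x_n)$. The same convergence theorem gives $\dot{\bar x}\in F(t,\bar x)$. The tail is handled by the lower bound $\phi\in L^1$, which makes $\int_T^\infty$ of the negative parts uniformly small. I expect this compactness and lower semicontinuity step, on an infinite horizon and with only $L^1_{\mathrm{loc}}$ growth, to be the main technical obstacle.

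Finally, for \eqref{pfv-bc} under (B)$_H$, fix $t\geq T$ and $x\in\D V(t,\cdot)$. Take an optimal $\bar x\in S_{\!H}(t,x)$; it has $|H^{\ast}(s,\bar x(s),\dot{\bar x}(s))|\leq\psi(s)$. Then
\[
|V(t,x)|\leq\int_t^\infty\psi(s)\,ds,
\]
uniformly in $x$, and the right-hand side tends to $0$ as $t\to\infty$ since $\psi\in L^1$.
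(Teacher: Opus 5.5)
Your proof is correct and follows essentially the paper's route. The paper obtains lower semicontinuity and existence of optimal trajectories from Lemma~\ref{wcsh} and Proposition~\ref{glscfv} with $H_n:=H$. Their proof is your scheme: lower semicontinuity on bounded intervals (imported from earlier work rather than from an Ioffe-type theorem), diagonal extraction over $[s_0,k]$, and control of the tail by $\int_k^\infty\phi$. The closing estimate $|V(t,x)|\le\int_t^\infty\psi(s)\,ds$ is identical to the paper's. The identification $V=\mathcal{V}$ (the paper's Theorem~\ref{eqw-fws}) is not needed here, since you end up arguing directly for $V$. Using near-minimizers instead of optimal trajectories in the semicontinuity step is a harmless, even slightly cleaner, variation.

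There is one small omission in the last part. You never use the hypothesis $\mathrm{dom}\,V\neq\emptyset$ contained in $\textnormal{(B)}_H$, yet you need $\mathrm{dom}\,V(t,\cdot)\neq\emptyset$ for all sufficiently large $t$. Otherwise the supremum in \eqref{pfv-bc} is taken over the empty set and the limit statement fails. The paper supplies this as follows. Take $(s_0,y_0)\in\mathrm{dom}\,V$ and an optimal trajectory $\bar x$ there (any finite-cost admissible trajectory would do). Then $V(t,\bar x(t))\le\int_t^\infty H^{\ast}(s,\bar x(s),\dot{\bar x}(s))\,ds<+\infty$ for every $t\ge s_0$, the tail being finite because $H^{\ast}\ge\phi\in L^1$. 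Your estimate then applies for all $t\ge\max\{s_0,T\}$.
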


\begin{proof}
Combining our assumptions with the results of \cite[Chap. 14]{R-W}, we deduce that $V$ is well-defined and satisfies $-\infty<V(t_0,x_0)\leq +\infty$ for all $t_0\in[0,\infty)$ and $x_0\in A$.

The lower semicontinuity of the value function $V$ follows directly from Proposition \ref{glscfv},\linebreak stated in the next section, with $H_n:=H$. 

From Lemma \ref{wcsh}, also stated in the next section (again with $H_n:=H$), it is not difficult to deduce that for every $(t_0,x_0)\in \D V$ there exists $\bar{x}(\cdot)\in S_{\!\!H}(t_0,x_0)$ such that $V(t_0,x_0)\;=\;\int_{t_0}^{\infty}H^{\ast}(t,\bar{x}(t),\dot{\bar{x}}(t))\,dt$, which means that $\bar{x}(\cdot)$ is an optimal trajectory of  $V$ at $(t_0,x_0)$.

It remains to prove \eqref{pfv-bc}. In view of $\tn{(B)}_H$ there exists $(s_0,y_0)$ belonging to $\D V$. Therefore, there exists an optimal trajectory $\bar{x}(\cdot)$ of $V$ at $(s_0,y_0)$. By the definition of $V$,
$$V(t,\bar{x}(t))\;\leq\;\int_{t}^{\infty}H^{\ast}(s,\bar{x}(s),\dot{\bar{x}}(s))\,d\!s\;<\;+\infty$$
for all $t\in[s_0,\infty)$. Therefore, we have $\D V(t,\cdot)\neq\emptyset$ for all $t\in[s_0,\infty)$. This means that  $\sup\nolimits_{\,x\,\in\,\D V(t,\,\cdot\,)}|V(t,x)|$ is well-defined  for all $t\in[s_0,\infty)$. For every $(t,x)\in \D V$ we choose an optimal trajectory  $\bar{z}_{t,x}(\cdot)$ of $V$ at $(t,x)$. The latter, together with $\tn{(B)}_H$,  implies that
$$|V(t,x)|\;\leq\;\int_{t}^{\infty}|H^{\ast}(s,\bar{z}_{t,x}(s),\dot{\bar{z}}_{t,x}(s))|\,d\!s\;\leq\;\int_{t}^{\infty}\psi(s)\,d\!s,$$
for all $t\geq \max\{s_0,T\}$, $x\in\D V(t,\cdot)$. Hence, we deduce that $V$ satisfies \eqref{pfv-bc}.
\end{proof}

\begin{Th}\label{eqw-fws}
Assume that $A$ is a nonempty closed subset of $\R^{\scriptscriptstyle N}$ and that  the set-valued\linebreak map $U:[0,\infty)\rightsquigarrow\R^{\scriptscriptstyle M}$ is  measurable  with nonempty closed images. Additionally, let\linebreak
$H:[0,\infty)\times\R^{\scriptscriptstyle N}\times\R^{\scriptscriptstyle N}\to\R$ satisfy \tn{(H1)} and \tn{(H2)}, and let  $f:[0,\infty)\times\R^{\scriptscriptstyle N}\times\R^{\scriptscriptstyle M}\to\R^{\scriptscriptstyle N}$ and\linebreak $l:[0,\infty)\times\R^{\scriptscriptstyle N}\times\R^{\scriptscriptstyle M}\to\R$ satisfy \tn{(h1)}. If  $(U,f,l)$ is the epigraphical representation of $H$,
then the value functions $V$ and $\mathcal{V}$ are well-defined, and  $$V(t_0,x_0)=\mathcal{V}(t_0,x_0) \;\;\;\it{for all}\;\;\;(t_0,x_0)\in[0,\infty)\times A.$$
If $\bar{x}(\cdot)$ is an optimal trajectory of $V$ at  $(t_0,x_0)\in\D V$, then there exists a measurable function $\bar{u}(\cdot)$ such that $(\bar{x},\bar{u})(\cdot)$  is the optimal pair of $\mathcal{V}$ at  $(t_0,x_0)$. Conversely, if $(\bar{x},\bar{u})(\cdot)$ is an optimal pair of $\mathcal{V}$ at $(t_0,x_0)\in\D\mathcal{V}$, then $\bar{x}(\cdot)$ is the optimal trajectory of $V$ at  $(t_0,x_0)$.
\end{Th}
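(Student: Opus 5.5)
The plan is to compare the two value functions pathwise, using the epigraphical representation \eqref{def-grh}. Two facts will do all the work: $(f,l)(t,x,U(t))\subset\E H^{\ast}(t,x,\cdot)$ (from \eqref{rep-inc}) and $\G H^{\ast}(t,x,\cdot)\subset(f,l)(t,x,U(t))$.

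First I check that both functionals are well defined. By (h1), $l\geq\phi$ with $\phi\in L^1$. By (H2) and \eqref{tlf-1}, $H^{\ast}(t,x,v)\geq -H(t,x,0)\geq\phi(t)$. Hence both integrands have negative parts bounded by $|\phi|\in L^1$. The integrals in \eqref{def-vf2} and \eqref{def-vf1} therefore exist in $(-\infty,+\infty]$, and $V,\mathcal{V}>-\infty$. Measurability of $t\mapsto H^{\ast}(t,x(t),\dot x(t))$ for absolutely continuous $x(\cdot)$ follows because $H^{\ast}$ is a normal integrand. This uses (H1) together with the results of \cite[Chap.~14]{R-W}, as in the proof of Proposition \ref{pfv-cvp}.

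\emph{Step 1: $V\leq\mathcal{V}$.} Take $(x,u)(\cdot)\in S_{\!f}(t_0,x_0)$. By Lemma \ref{rep-rde}, or directly from \eqref{rep-inc}, $\dot x(t)=f(t,x(t),u(t))\in\D H^{\ast}(t,x(t),\cdot)$ and
\[
H^{\ast}(t,x(t),\dot x(t))\leq l(t,x(t),u(t))\quad\text{a.e.}
\]
Thus $x(\cdot)\in S_{\!H}(t_0,x_0)$ and its cost does not exceed the control cost. Taking the infimum gives $V\leq\mathcal{V}$.

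\emph{Step 2: $\mathcal{V}\leq V$.} Take $x(\cdot)\in S_{\!H}(t_0,x_0)$ with finite cost. For a.e. $t$ we have
\[
(\dot x(t),H^{\ast}(t,x(t),\dot x(t)))\in\G H^{\ast}(t,x(t),\cdot)\subset(f,l)(t,x(t),U(t)).
\]
So for a.e. $t$ the set
\[
\Phi(t)=\{u\in U(t)\mid f(t,x(t),u)=\dot x(t),\ l(t,x(t),u)=H^{\ast}(t,x(t),\dot x(t))\}
\]
is nonempty. I then apply a measurable selection theorem of Filippov type (e.g. \cite[Thm.~8.2.10]{A-F}). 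Its hypotheses hold here: $U$ is measurable with closed values; $(f,l)$ is Carathéodory by (h1); and $t\mapsto(\dot x(t),H^{\ast}(t,x(t),\dot x(t)))$ is measurable. The theorem yields a measurable $u(t)\in\Phi(t)$. Then $(x,u)(\cdot)\in S_{\!f}(t_0,x_0)$ and the two costs are equal, which gives $\mathcal{V}\leq V$.

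This selection step is the main obstacle. The delicate points are the measurability of $\Phi$, which needs $U$ measurable and $(f,l)$ Carathéodory on $\R^{\scriptscriptstyle M}$ (given by (h1)), and the measurability of the target function built from $H^{\ast}$. I would handle the latter via the normal integrand property of $H^{\ast}$.

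\emph{Step 3: optimal objects.} These statements follow directly from Steps 1–2 and $V=\mathcal{V}$. If $\bar x(\cdot)$ is optimal for $V$, the control $\bar u$ from Step 2 gives a pair with cost $V(t_0,x_0)=\mathcal{V}(t_0,x_0)$, so the pair is optimal for $\mathcal{V}$. Conversely, if $(\bar x,\bar u)(\cdot)$ is optimal for $\mathcal{V}$, Step 1 shows that $\bar x\in S_{\!H}(t_0,x_0)$ with cost at most $\mathcal{V}(t_0,x_0)=V(t_0,x_0)$. Hence $\bar x(\cdot)$ is optimal for $V$.
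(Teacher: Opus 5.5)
Your proposal is correct and follows the paper's proof essentially step for step. The inclusion $(f,l)(t,x,U(t))\subset\E H^{\ast}(t,x,\cdot)$ gives $V\leq\mathcal{V}$. The inclusion $\G H^{\ast}(t,x,\cdot)\subset(f,l)(t,x,U(t))$, combined with the selection theorem \cite[Thm.~8.2.10]{A-F}, gives $\mathcal{V}\leq V$. The claims about optimal trajectories and optimal pairs are then read off from these two arguments; the paper merely works with $\varepsilon$-optimal objects instead of taking infima over all admissible ones.

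One small caution: Lemma \ref{rep-rde} is not available here, because closedness and convexity of $f(t,x,U(t))$ are not assumed. In Step 1 you should therefore rely directly on the first inclusion in \eqref{def-grh}, as you also propose.
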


\begin{proof}
Under our assumptions, we have $-\infty<V(t_0,x_0)\leq +\infty$ and $-\infty<\mathcal{V}(t_0,x_0)\leq +\infty$ for all $t_0\in[0,\infty)$ and $x_0\in A$.

Fix $(t_0,x_0)\in[0,\infty)\times A$. We first show that $\mathcal{V}(t_0,x_0)\geq V(t_0,x_0)$. If $\mathcal{V}(t_0,x_0)=+\infty$, then $\mathcal{V}(t_0,x_0)\geq V(t_0,x_0)$. Suppose next that $(t_0,x_0)\in\D\mathcal{V}$. Fix $\varepsilon>0$. By the definition of $\mathcal{V}$, there exists  $(x,u)(\cdot)\in S_{\!\!f\,}(t_0,x_0)$ such that
\begin{equation}\label{eqw-fws-1}
\mathcal{V}(t_0,x_0)+\varepsilon\;\geq\;\int_{t_0}^{\infty}l(t,x(t),u(t))\,dt.
\end{equation}
Since $(f,l)(t,x,U(t))\subset\E H^{\ast}(t,x,\cdot)$ for all $t\in[0,\infty)$, $x\in\R^{\scriptscriptstyle N}$, we deduce  $l(t,x(t),u(t))\geq H^{\ast}(t,x(t),f(t,x(t),u(t)))$ for a.e. $t\in[t_0,\infty)$. In particular, $f(t,x(t),u(t))\in\D H^{\ast}(t,x(t),\cdot)$ for a.e. $t\in[t_0,\infty)$. So, $\dot{x}(t)\in\D H^{\ast}(t,x(t),\cdot)$ for a.e. $t\in[t_0,\infty)$. Moreover, $x([t_0,\infty))\subset A$. Along with the inequality \eqref{eqw-fws-1}, this implies that
\begin{eqnarray*}\label{eqw-fws-2}
\mathcal{V}(t_0,x_0)+\varepsilon &\geq & \int_{t_0}^{\infty}l(t,x(t),u(t))\,dt\\
& \geq & \int_{t_0}^{\infty}H^{\ast}(t,x(t),f(t,x(t),u(t)))\,dt\\
& = & \int_{t_0}^{\infty}H^{\ast}(t,x(t),\dot{x}(t))\,dt\;\;\geq\;\; V(t_0,x_0).
\end{eqnarray*}
 As $\varepsilon>0$ can be arbitrary small, the latter inequality implies $\mathcal{V}(t_0,x_0)\geq V(t_0,x_0)$.

We show next that $V(t_0,x_0)\geq\mathcal{V}(t_0,x_0)$. If $V(t_0,x_0)=+\infty$, then $V(t_0,x_0)\geq\mathcal{V}(t_0,x_0)$.\linebreak Let us assume that $(t_0,x_0)\in\D V$.
Fix $\varepsilon>0$. In view of the definition of $V$, there exists $x(\cdot)\in S_{\!\!H}(t_0,x_0)$ such that
\begin{equation}\label{eqw-fws-3}
V(t_0,x_0)+\varepsilon\;\geq\;\int_{t_0}^{\infty}H^{\ast}(t,x(t),\dot{x}(t))\,dt.
\end{equation}
Thus, $\dot{x}(t)\in\D H^{\ast}(t,x(t),\cdot)$ for a.e. $t\in[t_0,\infty)$. So, $(\dot{x}(t),H^{\ast}(t,x(t),\dot{x}(t)))\in\G H^{\ast}(t,x(t),\cdot)$ for a.e. $t\in[t_0,\infty)$.
Since $\G H^{\ast}(t,x,\cdot)\subset(f,l)(t,x,U(t))$ for all $t\in[t_0,\infty)$, $x\in\R^{\scriptscriptstyle N}$, by\linebreak \cite[Thm. 8.2.10]{A-F} there exists a  measurable function $u(\cdot)$ such that $(\dot{x}(t),H^{\ast}(t,x(t),\dot{x}(t)))=(f,l)(t,x(t),u(t))$ and $u(t)\!\in\! U(t)$ for a.e. $t\!\in\![t_0,\infty)$. So, $(x,u)(\cdot)\!\in\! S_{\!\!f\,}(t_0,x_0)$. Thus, by \eqref{eqw-fws-3},
$$V(t_0,x_0)+\varepsilon\;\geq\;\int_{t_0}^{\infty}H^{\ast}(t,x(t),\dot{x}(t))\,dt\;=\;\int_{t_0}^{\infty}l(t,x(t),u(t))\,dt\;\geq\;\mathcal{V}(t_0,x_0),$$
 which implies $ V(t_0,x_0)\geq \mathcal{V}(t_0,x_0)$.

From the equality $V(t_0, x_0) = \mathcal{V}(t_0, x_0)$ and its proof it follows the second part of this theorem's statement.
\end{proof}

The main result of the section is a reformulation of Basco-Frankowska result recalled in Theorem \ref{eaus-fl}. We replaced the assumptions formulated as properties of the triple $(U,f,l)$\linebreak by assumptions formulated by using only the Hamiltonian $H(t,x,p)$ and its Fenchel\linebreak conjugate $H^*(t,x,v)$.

\begin{Th}\label{eaus-h}
Assume that $H$ satisfies $\tn{(h)}_H''$ and  $\tn{(OPC)}_H$. Let $W:[0,\infty)\times A\to\R\cup\{+\infty\}$ be a lower semicontinuous function such that $\D V(t,\cdot)\!\subset\!\D W(t,\cdot)\neq\emptyset$  for all large $t>0$ and the final condition \eqref{ic-hjb-2} is satisfied. Then the following statements are equivalent:
\begin{enumerate}
    \item[\tn{\bf{(i)}}] $W=V$, where $V$ is the value function given by \eqref{def-vf1}\tn{;}
    \item[\tn{\bf{(ii)}}] $W$ is a weak solution of  \eqref{eqhjb_0} on the set $(0,\infty)\times A$ , and  $t\rightsquigarrow\E W(t,\cdot)$ is locally absolutely continuous.
\end{enumerate}
Moreover, if in addition $\tn{(B)}_H$ holds true, then $V$ is the unique weak solution of \eqref{eqhjb}  with locally absolutely continuous $t\rightsquigarrow\E V(t,\cdot)$.
\end{Th}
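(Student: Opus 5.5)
The plan is to reduce everything to Theorem \ref{eaus-fl} via the representation of Theorem \ref{rwwar}. Since $H$ satisfies $\tn{(h)}_H''$ and $\tn{(OPC)}_H$, Theorem \ref{rwwar} (built on Theorem \ref{rep-t}) gives a triple $(\B,f,l)$ with three properties:
\begin{enumerate}
\item[(a)] it satisfies $\tn{(h)}''$ and $\tn{(OPC)}$;
\item[(b)] it is an epigraphical representation of $H$, by (A3);
\item[(c)] $f(t,x,\B)=\D H^{\ast}(t,x,\cdot)$.
\end{enumerate}
For this triple, Theorem \ref{eqw-fws} applies: (H1), (H2) hold, (h1) follows from (A4) and the measurability and continuity of $f,l$, and $U\equiv\B$ is measurable with closed images. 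Hence $V=\mathcal{V}$ on $[0,\infty)\times A$, where $\mathcal{V}$ is the value function \eqref{def-vf2} of the control problem for $(\B,f,l)$. In particular $\D V(t,\cdot)=\D\mathcal{V}(t,\cdot)$ for every $t$, so the hypothesis $\D V(t,\cdot)\subset\D W(t,\cdot)\neq\emptyset$ for large $t$ is exactly the hypothesis of Theorem \ref{eaus-fl}.

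For the equivalence (i)$\Leftrightarrow$(ii), apply Theorem \ref{eaus-fl} to $(\B,f,l)$. Statement (i) there, $W=\mathcal{V}$, coincides with $W=V$. In statement (ii) there, ``weak solution'' is meant in the sense of Definition \ref{dws2} with the augmented Hamiltonian $\bar H$ built from $(\B,f,l)$. By Proposition \ref{eqw-defsol}, which needs $(\B,f,l)$ to be a representation of $H$ satisfying $\tn{(h)}''$, this is equivalent to being a weak solution in the sense of Definition \ref{dws1}. Definition \ref{dws1} involves only $H$ and $\D H^{\ast}$, and the identity $\D H^{\ast}=f(t,x,\B)$ used there comes from Lemma \ref{rep-rde} or directly from Theorem \ref{rep-t}. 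The local absolute continuity requirement and the final condition \eqref{ic-hjb-2} do not depend on the representation. So the equivalence transfers verbatim.

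For the uniqueness part, I would not check condition (B) for $(\B,f,l)$, since $\tn{(B)}_H$ is weaker and need not imply (B). Instead I argue directly.
\begin{enumerate}
\item[1.] By Proposition \ref{pfv-cvp}, $\tn{(B)}_H$ yields \eqref{pfv-bc}, i.e.\ $V$ satisfies \eqref{ic-hjb-2}.
\item[2.] $V$ is lower semicontinuous by Proposition \ref{pfv-cvp}, and the proof there shows $\D V(t,\cdot)\neq\emptyset$ for all $t\geq s_0$.
\item[3.] Take $W:=V$ in the equivalence just proved. Trivially (i) holds, so (ii) holds: $V$ is a weak solution with locally absolutely continuous $t\rightsquigarrow\E V(t,\cdot)$.
\item[4.] For uniqueness, let $W$ be any weak solution of \eqref{eqhjb} with locally absolutely continuous epigraph map. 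The equivalence then gives $W=V$, provided $\D V(t,\cdot)\subset\D W(t,\cdot)\neq\emptyset$ for large $t$.
\end{enumerate}

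The main obstacle is this domain inclusion in step 4, which Basco and Frankowska obtain from (B). In Theorem \ref{eaus-fl} the uniqueness statement is also phrased within the class where the domain hypothesis makes sense. I would first try to follow their argument, using $\tn{(B)}_H$ in place of (B): via Theorem \ref{eqw-fws}, optimal trajectories of $V$ and of $\mathcal{V}$ correspond, so the bound $|H^{\ast}(t,x(t),\dot x(t))|\le\psi(t)$ along trajectories of $S_{\!H}$ is exactly what their proof uses along graphical selections $l=H^{\ast}(t,x,f)$. If this does not go through, the fallback is to read the uniqueness claim as holding within the class of $W$ satisfying the domain hypothesis, where it follows immediately from step 4.
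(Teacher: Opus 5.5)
Your proposal is correct and takes essentially the paper's route: Theorem \ref{rwwar} plus the epigraphical property give $V=\mathcal{V}$ via Theorem \ref{eqw-fws}, the equivalence then comes from Theorem \ref{eaus-fl} (you are more explicit than the paper in invoking Proposition \ref{eqw-defsol}), and uniqueness follows once Proposition \ref{pfv-cvp} shows that $V$ satisfies the final condition. Your fallback reading of the uniqueness claim, namely uniqueness within the class of lower semicontinuous $W$ satisfying the domain and final-condition hypotheses of the theorem, is exactly what the paper implicitly uses, so there is no need to adapt the Basco--Frankowska argument based on (B).
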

\begin{proof}
In view of Theorem \ref{rwwar} there exists a representation $(U,f,l)$ of $H$ satisfying $\tn{(h)}''$ and  $\tn{(OPC)}$. Moreover, in view of Theorem \ref{rwwar-i}, the triple $(U,f,l)$ is also an epigraphical representation of $H$. By Theorem \ref{eqw-fws}, we have $V=\mathcal{V}$, where the value function $\mathcal{V}$ is given by \eqref{def-vf2}. So, the equivalence of (i) and (ii) is a consequence of Theorem \ref{eaus-fl}. By Proposition \ref{pfv-cvp}, the value function $V$ satisfies the final condition \eqref{pfv-bc}. So, it is the unique weak solution to
\eqref{eqhjb}.
\end{proof}

\begin{Rem}\label{rmwb}
Since the condition $\tn{(B)}_H$ is weaker than condition (B), one cannot infer the existence of a solution from Theorem \ref{eaus-h} based on the existence of a solution from Theorem \ref{eaus-fl}.

Assume that $(U,f,l)$ is a representation of $H$  satisfying $\tn{(h)}''$. If (B) is satisfied, then $\tn{(B)}_H$ is also satisfied. Indeed,
let $\dot{x}(t)\in\D H^{\ast}(t,x,\cdot)=f(t,x,U(t))$ for almost every\linebreak $t\in[0,\infty)$. Thus, by \cite[Thm. 8.2.10]{A-F}, there exists a measurable function $u(\cdot)$ such that $\dot{x}(t)=f(t,x(t),u(t))$ and $u(t)\in U(t)$ for almost every $t\in[0,\infty)$. By \eqref{rep-inc} we obtain
\begin{equation}\label{eqwcb}
\phi(t)\leq H^{\ast}(t,x(t),\dot{x}(t)) = H^{\ast}(t,x(t),f(t,x(t),u(t)))\leq l(t,x(t),u(t))\leq\psi(t)
\end{equation}
for almost every $t\in[0,\infty)$. Thus,  $|H^{\ast}(t,x(t),\dot{x}(t))|\leq \psi(t)+|\phi(t)|$ for almost every $t\in[0,\infty)$. This means that  $\tn{(B)}_H$ holds, provided that (B) is satisfied.

The converse of the above implication is not true. Indeed, we consider the Hamiltonian $H$ as in Example \ref{Ex1}. Then $H$  satisfies $\tn{(h)}_H''$ and $\tn{(OPC)}_H$ with $A=(-\infty,0]$  and  $\lambda(t,x)=\alpha(x)\,|x|+\alpha(t)+1$. Moreover, $V(\cdot,\cdot)\equiv 0$ and $\tn{(B)}_H$ holds with $\psi(t)=\alpha(t)\,e^{-\gamma t}$. We consider $f$ and $l$ as in Example \ref{Ex1}. Then $([-1,1],f,l)$ is a representation of $H$ with\linebreak $f$ and $l$ satisfying $\tn{(h)}''$ and (OPC)  with $A=(-\infty,0]$. Moreover, $\mathcal{V}(\cdot,\cdot)\equiv 0$ and (B) holds with $\psi(t)=\alpha(t)\,e^{-\gamma t}$. However, if we modify the functions $f$ and $l$ as follows
\begin{align*}
& \hat{f}(t,x,u_1,u_2):=f(t,x,u_1)\;\;\tn{for all}\;\;t\in[0,\infty),\; x,u_1,u_2\in\R,\\
& \hat{l}(t,x,u_1,u_2):=l(t,x,u_1)+\min\{\,|u_2|,1\}\;\;\tn{for all}\;\;t\in[0,\infty),\; x,u_1,u_2\in\R,
\end{align*}
then $([-1,1]\times[-1,1],\hat{f},\hat{l}\,)$ is a representation of $H$ with $\hat{f}$ and $\hat{l}$ satisfying $\tn{(h)}''$ and (OPC)  with $A=(-\infty,0]$. Moreover, $\mathcal{V}(\cdot,\cdot)\equiv 0$, but (B) does not hold. Indeed, we set $(x,u_1,u_2)(\cdot)\equiv(x_0,0,1)$ on $[t_0,\infty)$, where $(t_0,x_0)\in[0,\infty)\times A$. Then $(x,u_1,u_2)(\cdot)\in S_{\!\!\hat{f}\,}(t_0,x_0)$  and $|\hat{l}(t,x(t),u_1(t),u_2(t))|=l(t,x_0,0)+1=1$ for all $t\in[t_0,\infty)$ and $(t_0,x_0)\in[0,\infty)\times A$.\linebreak Because the functions $t\to|\hat{l}(t,x(t),u_1(t),u_2(t))|$ do not belong to $L^1([t_0,\infty);[0,\infty))$ for all $t_0\geq 0$ and all $x_0\in A$, the condition (B) is not satisfied.

However, it turns out that conditions $\tn{(B)}$ and $\tn{(B)}_{\!H}$ can be weakened to be equivalent for the epigraphical representation:

\begin{enumerate}[leftmargin=12.5mm]
\item[$\pmb{\tn{(B)}^+:}$] $\D\mathcal{V}\not=\emptyset$ and there exist $T>0$ and $\psi\in L^1([T,\infty);[0,\infty))$ such that for all $(t_0,x_0)\in \D \mathcal{V}\cap[T,\infty)\times\R^{\scriptscriptstyle N}$ an optimal pair $(x,u)(\cdot)\in S_{\!\!f}(t_0,x_0)$ exists with  $|l(t,x(t),u(t)|\leq\psi(t)$ for almost all $t\in[t_0,\infty)$.\vspace{2mm}
\item[$\pmb{\tn{(B)}_H^+:}$] $\D V\not=\emptyset$ and there exist $T>0$ and $\psi\in L^1([T,\infty);[0,\infty))$ such that for all\\ $(t_0,x_0)\in \D V\cap[T,\infty)\times\R^{\scriptscriptstyle N}$ an optimal trajectory $x(\cdot)\in S_{\!\!H}(t_0,x_0)$ exists\\ with $|H^{\ast}(t,x(t),\dot{x}(t))|\!\leq\!\psi(t)$ for almost all $t\in[t_0,\infty)$.
\end{enumerate}

\vspace{1mm}
\noindent The proof of Proposition \ref{pfv-cvp} remains valid with the weakened condition $\tn{(B)}_{\!H}^+$, thus the value function $V$ still vanishes at infinity, meaning it satisfies \eqref{pfv-bc}.

Now, we show that $\tn{(B)}^+\Rightarrow\tn{(B)}_{\!H}^+$. Let $(x,u)(\cdot)\in S_{\!\!f}(t_0,x_0)$ be the optimal trajectory for the value function $\mathcal{V}$ at $(t_0,x_0)$ as specified by $\tn{(B)}^+$. In view of Theorem \ref{eqw-fws},  $x(\cdot)$ is the optimal trajectory of the value function $V$ at $(t_0,x_0)$. Similarly as above, we show that the pair  $(x,u)(\cdot)$ satisfies inequality \eqref{eqwcb}. From this inequality, we obtain  $|H^{\ast}(t,x(t),\dot{x}(t))|\leq \psi(t)+|\phi(t)|$. Consequently, the chosen trajectory $x(\cdot)$ satisfies $\tn{(B)}_{\!H}^+$.

Now, we show that $\tn{(B)}_{\!H}^+\Rightarrow\tn{(B)}^+$. Let $x(\cdot)\!\in\! S_{\!\!H}(t_0,x_0)$ be the optimal trajectory for the value function $V$ at $(t_0,x_0)$ as specified by $\tn{(B)}_{\!H}^+$. Since the triple $(U,f,l)$ is the epigraphical representation of the Hamiltonian $H$, it follows that
$$(\dot{x}(t),H^{\ast}(t,x(t),\dot{x}(t)))\in\G H^{\ast}(t,x(t),\cdot)\subset(f,l)(t,x,U(t)).$$
Therefore, in view of  \cite[Thm. 8.2.10]{A-F}, there exists a measurable function $u(\cdot)$ such that $(\dot{x}(t),H^{\ast}(t,x(t),\dot{x}(t)))=(f,l)(t,x(t),u(t))$ with $u(t)\in U(t)$. In consequence, we obtain that $H^{\ast}(t,x(t),\dot{x}(t))=l(t,x(t),u(t))$ and $(x,u)(\cdot)\in S_{f}(t_0,x_0)$. Analogously, as in the proof of Theorem \ref{eqw-fws}, we can show that $(x,u)(\cdot)$ is an optimal pair for the value function $\mathcal{V}$ at $(t_0,x_0)$. Moreover, we have
$|l(t,x(t),u(t))|=|H^{\ast}(t,x(t),\dot{x}(t))|\leq\psi(t)$.
This implies that the selected pair $(x,u)(\cdot)$ satisfies $\tn{(B)}^+$.
\end{Rem}

\section{Stability of representations}

\noindent We denote by $I_n$ and $I$ non-degenerate closed intervals in $\R$. Let $\pi_J(\cdot)$ be a projection of $\R$ onto a nonempty closed convex subset $J$ of $\R$. We say that a sequence $\varphi_n : I_n \to \R^{\scriptscriptstyle N}$ converges locally uniformly to  $\varphi : I \to \R^{\scriptscriptstyle N}$  if $\lim_{n\to\infty} \text{d}_\mathcal{H}(I_n,I) = 0$, and  $\varphi_n \circ \pi_{I_n} : \R \to \R^{\scriptscriptstyle N}$ converges to $\varphi \circ \pi_I : \R \to \R^{\scriptscriptstyle N}$ uniformly on every compact subset of $\R$.

\begin{Lem}\label{wcsh}
Let $H_n,H:[0,\infty)\times\R^{\scriptscriptstyle N}\times\R^{\scriptscriptstyle N}\to\R$ be  measurable with respect to the first variable, continuous with respect to the second variable, and convex with respect to the third variable. Moreover, let $H_n(t,\cdot,\cdot)$ converge uniformly on compacts to $H(t,\cdot,\cdot)$ for all $t\in[0,\infty)$. Assume also that there exist $\phi(\cdot)\in L^1([0,\infty);\R)$ and $c(\cdot)\in L^1_{\mathrm{loc}}\!\big([0,\infty);[0,\infty)\big)$ such that, for all $t\in[0,\infty)$, $x,p,q\in\R^{\scriptscriptstyle N}$, $ n\in\N$, one has $H_n(t,x,0)\leq-\phi(t)$ and
$$|H_n(t,x,p)-H_n(t,x,q)|\leq c(t)(1+|x|)|p-q|.$$
Let $s_n,s_0,\eta\in[0,\infty)$ and $z_n,z_0\in\R^{\scriptscriptstyle N}$ with $(s_{n},z_n)\to(s_0,z_0)$, and let $x_n:[s_n,\infty)\to\R^{\scriptscriptstyle N}$\linebreak  be a sequence of locally absolutely continuous functions with $x_n(s_n)=z_n$ such that
\begin{equation*}
\int_{s_n}^{\infty}H_n^{\ast}(t,x_n(t),\dot{x}_n(t))\,dt\;\leq\;\eta,\;\forall\,n\in\N.
\end{equation*}
Then there exists a subsequence $\{x_{n_i}(\cdot)\}_{i\in\N}$ of the sequence $\{x_n(\cdot)\}_{n\in\N}$ that converges locally uniformly to some locally absolutely continuous function $x:[s_0,\infty)\to\R^{\scriptscriptstyle N}$ with $x(s_0)=z_0$. Moreover, the following inequality is satisfied
\begin{equation*}
\liminf_{n\to\infty}\int_{s_n}^{\infty}H_n^{\ast}(t,x_n(t),\dot{x}_n(t))\,dt\;\geq\;\int_{s_0}^{\infty}H^{\ast}(t,x(t),\dot{x}(t))\,dt.
\end{equation*}
\end{Lem}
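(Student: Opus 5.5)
The plan is to combine a priori bounds on the derivatives, a compactness argument, and a duality (Fenchel) lower bound for the integral functional.

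\emph{Bounds and domain estimates.} The Lipschitz hypothesis $|H_n(t,x,p)-H_n(t,x,q)|\leq c(t)(1+|x|)|p-q|$ gives
$$\D H_n^{\ast}(t,x,\cdot)\subset c(t)(1+|x|)\B.$$
The hypothesis $H_n(t,x,0)\leq-\phi(t)$ together with \eqref{tlf-1} gives
$$H_n^{\ast}(t,x,v)\geq -H_n(t,x,0)\geq\phi(t).$$
Since the integral is finite ($\leq\eta$), we get $\dot{x}_n(t)\in\D H_n^{\ast}(t,x_n(t),\cdot)$ for a.e.\ $t$, hence $|\dot{x}_n(t)|\leq c(t)(1+|x_n(t)|)$. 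Gronwall's inequality, using $z_n\to z_0$ and $s_n\to s_0$, then gives a bound $\sup_n\sup_{t\in[s_n,T]}|x_n(t)|\leq R_T$ for every $T$. Consequently $|\dot{x}_n|\leq c(t)(1+R_T)$ on $[s_n,T]$, with $c\in L^1_{\mathrm{loc}}$.

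\emph{Compactness.} The functions $x_n\circ\pi_{[s_n,\infty)}$ are therefore equibounded and equi-absolutely continuous on compacts. Arzel\`a--Ascoli plus a diagonal argument over $T=1,2,\dots$ gives a subsequence converging locally uniformly (in the sense defined above) to some $x$ with $x(s_0)=z_0$. Since the derivatives are dominated by the integrable function $c(1+R_T)$, the Dunford--Pettis theorem lets us also assume $\dot{x}_{n_i}\rightharpoonup \dot{x}$ weakly in $L^1([s_0+\delta,T])$ for all rational $\delta<T$. Thus $x$ is locally absolutely continuous and $\dot{x}$ is the weak limit of the derivatives.

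\emph{Lower semicontinuity on $[a,T]$.} Fix $a>s_0$ (or $a=s_0$ if $s_n\leq s_0$) and $T>a$. For any $p\in L^\infty([a,T];\R^{\scriptscriptstyle N})$, the Fenchel inequality gives
$$\int_a^T H_n^{\ast}(t,x_n,\dot{x}_n)\,dt\;\geq\;\int_a^T \langle \dot{x}_n,p\rangle-H_n(t,x_n(t),p(t))\,dt.$$
The first term on the right converges by weak convergence. For the second term, $H_n(t,x_n(t),p(t))\to H(t,x(t),p(t))$ pointwise, using uniform convergence on compacts and continuity of $H(t,\cdot,\cdot)$. Moreover, it is bounded above by $-\phi(t)+c(t)(1+R_T)\|p\|_\infty$, which is integrable, so the reverse Fatou lemma yields
$$\liminf\int_a^T H_n^{\ast}\;\geq\;\int_a^T\langle\dot{x},p\rangle-H(t,x,p)\,dt.$$
Taking the supremum over bounded measurable $p$ recovers $\int_a^T H^{\ast}(t,x,\dot{x})\,dt$. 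This uses the interchange of supremum and integral for the normal integrand $(t,p)\mapsto\langle\dot{x}(t),p\rangle-H(t,x(t),p)$ (\cite[Thm.~14.60]{R-W}), combined with truncation $|p|\leq k$ and monotone convergence as $k\to\infty$.

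\emph{Assembling the pieces.} The remaining pieces $[s_n,a]$ and $[T,\infty)$ are controlled by $H_n^{\ast}\geq\phi$, contributing at least $\int_{s_n}^a\phi+\int_T^\infty\phi$. These are small as $a\downarrow s_0$ and $T\to\infty$, because $\phi\in L^1$. On the limit side, monotone convergence applied to $H^{\ast}-\phi\geq0$ lets $a\downarrow s_0$ and $T\to\infty$. This gives the claimed $\liminf$ inequality.

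\emph{Main obstacle.} I expect the main difficulty to be the duality step: justifying measurability and normality of $(t,v)\mapsto H^{\ast}(t,x(t),v)$, which follows from (M5)-type arguments and Carath\'eodory properties of $H$, and justifying the sup--integral interchange with only bounded test functions $p$.
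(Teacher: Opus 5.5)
Your proof is correct, but it takes a genuinely different route from the paper's. The paper does not prove the finite-horizon case. It takes the lemma on bounded intervals as known (from \cite[Lem.~8.10]{AMaX2} and \cite[Thm.~6.5]{AM1}) and applies it on $[s_n,k]$ for $k=1,2,\dots$, with nested subsequences $\{n[k,i]\}_i$ and compatible limits $x^k$. It then passes to the diagonal $\{n[k,k]\}_k$ and controls the tail by $\int_j^\infty H_n^{\ast}\ge\int_j^\infty\phi$ before letting $j\to\infty$.

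You instead prove the finite-horizon lower semicontinuity yourself:
\begin{itemize}
\item The inclusion $\mathrm{dom}\,H_n^{\ast}(t,x,\cdot)\subset\{|v|\le c(t)(1+|x|)\}$ gives a dominated bound on $\dot x_n$. This yields Arzel\`a--Ascoli and weak $L^1$ convergence of the derivatives.
\item Fenchel--Young with $L^\infty$ test functions $p$, plus the reverse Fatou lemma, handles the Hamiltonian term. The reverse Fatou step is justified by the integrable majorant $-\phi+c(1+R_T)\|p\|_\infty$.
\item The interchange of supremum and integral (\cite[Thm.~14.60]{R-W} on the decomposable space $L^\infty$) then gives $\liminf\int_a^T H_n^{\ast}\ge\int_a^T H^{\ast}(t,x,\dot x)$. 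The theorem applies because $\int_a^T H(t,x(t),0)\,dt\le-\int_a^T\phi<\infty$.
\end{itemize}
The measurability issue you flag as the main obstacle is harmless. The Lipschitz bound in $p$ passes to $H$ in the limit, so $(t,p)\mapsto H(t,x(t),p)-\langle\dot x(t),p\rangle$ is Carath\'eodory, hence a normal integrand.

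Because your estimate holds on every $[a,T]$ along one fixed subsequence, you need a diagonal argument only for the trajectories, not for the cost. Your route is self-contained. It also shows that the only convergence really used is the pointwise one, $H_n(t,x_n(t),p)\to H(t,x(t),p)$, together with uniform bounds; no epi-convergence of the conjugates is needed. The paper's route is shorter on the page but rests entirely on the cited bounded-interval results.

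One detail needs adding. The statement asserts the inequality for $\liminf_{n\to\infty}$ over the \emph{whole} sequence. Your extraction yields it only for the $\liminf$ along the extracted subsequence, which may be strictly larger. The paper handles this in its first lines, and you should do the same: first pass to a subsequence along which $\int_{s_n}^\infty H_n^{\ast}(t,x_n,\dot x_n)\,dt$ converges to the full $\liminf$, and only then extract the convergent subsequence.
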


\noindent When the interval is bounded, the above lemma can be inferred from the proofs in\linebreak \cite[Lem. 8.10]{AMaX2} and \cite[Thm. 6.5]{AM1}. Using this fact, we will show that the above lemma also holds true on an unbounded interval.

\begin{proof}[Proof of Lemma \ref{wcsh}]
Since $H_n(t,x,0)\leq-\phi(t)$  and $H_n(t,x,0)\to H(t,x,0)$ for all $t\in[0,\infty)$, $x\in\R^{\scriptscriptstyle N}$, we obtain $H(t,x,0)\leq-\phi(t)$ for all $t\in[0,\infty)$, $x\in\R^{\scriptscriptstyle N}$. Along with \eqref{tlf-1}, this implies that $H_n^{\ast}(t,x,v)\geq -H_n(t,x,0)\geq\phi(t)$ and $H^{\ast}(t,x,v)\geq -H(t,x,0)\geq\phi(t)$ for all $t\in[0,\infty)$, $x,v\in\R^{\scriptscriptstyle N}$. For locally absolutely continuous functions $y_n(\cdot)$ on $[s_n,\infty)$ and $y(\cdot)$ on $[s_0,\infty)$,
$$\Delta_n(y_n):=\int_{s_n}^{\infty}H_n^{\ast}(t,y_n(t),\dot{y}_n(t))\,dt,\qquad \Delta(y):=\int_{s_0}^{\infty}H^{\ast}(t,y(t),\dot{y}(t))\,dt.$$
Let us fix $\Delta:=\liminf_{n\to\infty}\Delta_n(x_n)$. Because of the definition
$\Delta$, there exists a subsequence (which we do not relabel) such that $\lim_{n\to\infty}\Delta_n(x_n)=\Delta$. Without loss of generality we can assume that $s_n,s_0\in[0,1)$ for all $n\in\N$. For absolutely continuous functions $y_n^k(\cdot)$ on $[s_n,k]$ and $y^k(\cdot)$ on $[s_0,k]$, where $k\in\N$, we define
$$\Delta_n^k(y_n^k):=\int_{s_n}^{k}H_n^{\ast}(t,y_n^k(t),\dot{y}_n^k(t))\,dt,\qquad \Delta^k(y^k):=\int_{s_0}^{k}H^{\ast}(t,y^k(t),\dot{y}^k(t))\,dt.$$
We denote by $x_n^k(\cdot)$ the functions $x_n(\cdot)$ restricted to $[s_n,k]$.

Let $k=1$. Since our lemma holds true over a bounded interval, there exists an increasing sequence of natural numbers $\{n{\scriptstyle[1,i]}\}_{i\in\N}$ such that  $x_{n{\scriptscriptstyle[1,i]}}^1(\cdot)$ converges locally uniformly to some absolutely continuous function $x^1(\cdot)$ defined on $[s_0,1]$ with $x^1(s_0)=z_0$ such that
$\lim_{i\to\infty}\Delta^1(x_{n{\scriptscriptstyle[1,i]}}^1)\geq\Delta^1(x^1)$.
Let $k=2$. Since our lemma holds true over a bounded interval, there exists  a subsequence  $\{n{\scriptstyle[2,i]}\}_{i\in\N}$ of the sequence $\{n{\scriptstyle[1,i]}\}_{i\in\N}$ such that  $x_{n{\scriptscriptstyle[2,i]}}^2(\cdot)$ converges locally uniformly to some absolutely continuous function $x^2(\cdot)$ defined on $[s_0,2]$ with $x^2(s_0)=z_0$ such that
$\lim_{i\to\infty}\Delta^2(x_{n{\scriptscriptstyle[2,i]}}^2)\geq\Delta^2(x^2)$.
We observe that the function $x^2(\cdot)$ is an extension of the function $x^1(\cdot)$. Moreover,  $\lim_{i\to\infty}\Delta^1(x_{n{\scriptscriptstyle[2,i]}}^1)=\lim_{i\to\infty}\Delta^1(x_{n{\scriptscriptstyle[1,i]}}^1)\geq\Delta^1(x^1)$.

By induction, for each $k\in\N$, we get a sequence $\{n{\scriptstyle[k,i]}\}_{i\in\N}$ and an absolutely continuous\linebreak function $x^k(\cdot)$ defined on $[s_0,k]$ with $x^k(s_0)=z_0$. The functions $x_{n{\scriptscriptstyle[k,i]}}^k(\cdot)$ converge locally\linebreak uniformly to  $x^k(\cdot)$ as $i\to\infty$ for all $k\in\N$. Additionally, for all $k\geq j$ and $k,j\in\N$, we have
$\{n{\scriptstyle[k,i]}\}_{i\in\N}$ is a subsequence of
$\{n{\scriptstyle[j,i]}\}_{i\in\N}$, and $x^{k}(\cdot)$ is an extension of $x^j(\cdot)$, moreover $$\lim_{i\to\infty}\Delta^j(x_{n{\scriptscriptstyle[k,i]}}^j)\;=\;\lim_{i\to\infty}\Delta^j(x_{n{\scriptscriptstyle[j,i]}}^j)\;\geq\;\Delta^j(x^j).$$

We define the function $x(\cdot)$, locally absolutely continuous on  $[s_0,\infty)$ with $x(s_0)=z_0$, by setting $x(t):=x^j(t)$ for each $t\in[s_0,j]$ and for all $j\in\N$. We observe that $\{n{\scriptstyle[k,k]}\}_{k\in\N}$ is an increasing sequence of natural numbers. Moreover, $\{n{\scriptstyle[k,k]}\}_{k=j}^{\infty}$ is a subsequence of
$\{n{\scriptstyle[j,i]}\}_{i\in\N}$ for all $j\in\N$. Additionally, the functions $x_{n{\scriptscriptstyle[k,k]}}(\cdot)$ converge locally uniformly to  $x(\cdot)$ as $k\to\infty$. We observe that, for all $j\in\N$,
$$\lim_{k\to\infty}\Delta^j(x_{n{\scriptscriptstyle[k,k]}}^j)\;=\;\lim_{i\to\infty}\Delta^j(x_{n{\scriptscriptstyle[j,i]}}^j)\;\geq\;\Delta^j(x^j).$$
In consequence, for all $j\in\N$, we have
\begin{eqnarray}\label{wcsh-1}
\Delta &=& \liminf_{k\to\infty}\Delta_{n{\scriptscriptstyle[k,k]}}(x_{n{\scriptscriptstyle[k,k]}})\;\;\geq \;\; \liminf_{k\to\infty}\Delta^j(x_{n{\scriptscriptstyle[k,k]}}^j)+\int_j^{\infty}\phi(t)\,dt\nonumber\\
&\geq & \Delta^j(x^j)+\int_j^{\infty}\phi(t)\,dt\;\;=\;\;\int_{s_0}^jH^{\ast}(t,x(t),\dot{x}(t))\,dt+\int_j^{\infty}\phi(t)\,dt.
\end{eqnarray}
By passing to the limit in \eqref{wcsh-1} as $j\to\infty$, we obtain
$\liminf_{n\to\infty}\Delta_n(x_n)\geq\Delta(x)$.
\end{proof}

\begin{Prop}\label{glscfv}
Let $H_n,H:[0,\infty)\times\R^{\scriptscriptstyle N}\times\R^{\scriptscriptstyle N}\to\R$
satisfy the assumptions of Lemma \ref{wcsh}. Assume that $A$ is a nonempty closed subset of $\R^{\scriptscriptstyle N}$. Moreover, let $V_n$ and $V$ be the value functions associated with $H_n^{\ast}$ and $H^{\ast}$, respectively.  Then, for all $(t_0,x_0)\in[0,\infty)\times A$ and any sequence $(t_{\zn},x_{\zn})\rightarrow (t_0,x_0)$ with $(t_{\zn},x_{\zn})\in[0,\infty)\times A$, we have
\begin{equation}\label{glscfv-1}
\liminf\nolimits_{\,n\,\to\,\infty}V_n(t_{\zn},x_{\zn})\geq V(t_0,x_0).
\end{equation}
\end{Prop}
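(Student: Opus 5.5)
We argue directly from Lemma~\ref{wcsh}, via a subsequence and near-optimal trajectories. Fix $(t_0,x_0)\in[0,\infty)\times A$ and a sequence $(t_{\zn},x_{\zn})\to(t_0,x_0)$ in $[0,\infty)\times A$. Let $\Delta:=\liminf_{n\to\infty}V_n(t_{\zn},x_{\zn})$. If $\Delta=+\infty$ there is nothing to prove. Note also that $\Delta>-\infty$: since $H_n(t,x,0)\leq-\phi(t)$, we have $H_n^{\ast}(t,x,v)\geq\phi(t)$, so $V_n\geq\int_0^\infty-|\phi|\,dt>-\infty$.

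Assume $\Delta<+\infty$. Pass to a subsequence (not relabelled) with $V_n(t_{\zn},x_{\zn})\to\Delta$ and $V_n(t_{\zn},x_{\zn})<+\infty$ for all $n$. For each $n$, choose by the definition of $V_n$ a trajectory $x_n(\cdot)\in S_{\!H_n}(t_{\zn},x_{\zn})$ with
$$\int_{t_{\zn}}^{\infty}H_n^{\ast}(t,x_n(t),\dot x_n(t))\,dt\leq V_n(t_{\zn},x_{\zn})+\tfrac1n.$$
These integrals are bounded above by some $\eta\geq0$, uniformly in $n$. Lemma~\ref{wcsh}, applied with $s_n=t_{\zn}$, $z_n=x_{\zn}$, then yields a further subsequence $x_{n_i}(\cdot)$ converging locally uniformly to a locally absolutely continuous $x:[t_0,\infty)\to\R^{\scriptscriptstyle N}$ with $x(t_0)=x_0$, and
$$\Delta=\lim_{i\to\infty}\int_{t_{n_i}}^{\infty}H_{n_i}^{\ast}(t,x_{n_i},\dot x_{n_i})\,dt\geq\int_{t_0}^{\infty}H^{\ast}(t,x(t),\dot x(t))\,dt.$$

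It remains to show that $x(\cdot)\in S_{\!H}(t_0,x_0)$, because then the right-hand side is at least $V(t_0,x_0)$. This is the main point to check, and it has two parts.

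\emph{State constraint.} Take $t>t_0$. Then $t\geq t_{n_i}$ for large $i$, and $x_{n_i}(t)\in A$. Locally uniform convergence (composed with the projections $\pi_{I_n}$) gives $x_{n_i}(t)\to x(t)$, and $A$ is closed, so $x(t)\in A$. At $t=t_0$ we have $x(t_0)=x_0\in A$ directly.

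\emph{Velocity constraint.} The integral $\int_{t_0}^\infty H^{\ast}(t,x,\dot x)\,dt$ is finite, being bounded above by $\Delta<\infty$ and below by $\int\phi$. Hence $H^{\ast}(t,x(t),\dot x(t))<+\infty$ for a.e.\ $t$, that is, $\dot x(t)\in\D H^{\ast}(t,x(t),\cdot)$ almost everywhere.

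Thus $x(\cdot)\in S_{\!H}(t_0,x_0)$, so $\Delta\geq V(t_0,x_0)$, which is \eqref{glscfv-1}.
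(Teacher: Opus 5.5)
Your proof is correct and follows the paper's argument: pass to a subsequence realizing the $\liminf$, apply Lemma \ref{wcsh}, then check that the limit trajectory is admissible, using the closedness of $A$ for the state constraint and the finiteness of the limiting integral for the velocity constraint. The only difference is that you take $\tfrac1n$-optimal trajectories, whereas the paper takes exact optimal trajectories of $V_n$, citing the proof of Proposition \ref{pfv-cvp}. Your choice is slightly cleaner, since it does not need existence of minimizers, which the paper obtains inside a proposition that itself relies on this one.
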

\begin{proof}
In view of Theorem \ref{eqw-fws} the value functions $V_n$ and $V$ are well-defined.
Let us fix  $(t_{\zn},x_{\zn})\rightarrow (t_0,x_0)$. We denote by LS\eqref{glscfv}  the left-hand side of the inequality \eqref{glscfv}. If $\tn{LS}\eqref{glscfv}=+\infty$, then $\tn{LS}\eqref{glscfv}\geq V(t_0,x_0)$. Suppose next that $\tn{LS}\eqref{glscfv}<+\infty$. The latter, together with the definition of limit inferior, implies that there exist a real number $\eta>0$ and a subsequence (which we do not relabel) such that $\lim_{n\to\infty}V_n(t_{\zn},x_{\zn})=\tn{LS}\eqref{glscfv}$ and $V_n(t_{\zn},x_{\zn})\leq\eta$ for all $n\in\N$. By the proof of  Proposition \ref{pfv-cvp}, it follows that there exists an optimal trajectory $x_n(\cdot)$ of $V_n$ at $(t_{\zn},x_{\zn})$ for all $n\in\N$. Therefore, we obtain
\begin{equation}\label{glscfv-2}
\int_{t_{\mzn}}^{\infty}H_n^{\ast}(t,x_n(t),\dot{x}_n(t))\,dt\;=\;V_n(t_{\zn},x_{\zn})\;\leq\;\eta,\;\;\forall\,n\in\N.
\end{equation}
By Lemma \ref{wcsh} there exists a subsequence (denoted again by $\{x_n(\cdot)\}$)
converges locally uniformly to some locally absolutely continuous function $x:[t_0,\infty)\to\R^{\scriptscriptstyle N}$ with $x(t_0)=x_0$. Moreover, the following inequalities are satisfied
\begin{equation}\label{glscfv-3}
\int_{t_0}^{\infty}H^{\ast}(t,x(t),\dot{x}(t))\,dt\;\leq\; \liminf_{n\to\infty}\int_{t_{\mzn}}^{\infty}H_n^{\ast}(t,x_n(t),\dot{x}_n(t))\,dt\;<\;+\infty.
\end{equation}
Therefore, the function $t\to H^{\ast}(t,x(t),\dot{x}(t))$ must take real values for almost all $t\in[t_0,\infty)$. Hence, $\dot{x}(t)\in\D H^{\ast}(t,x(t),\cdot)$ for almost all $t\in[t_0,\infty)$. Since $A$ is closed, $x_n([t_{\zn},\infty))\subset A$, $x_n(\cdot)$ converge locally uniformly to $x(\cdot)$, we obtain $x([t_0,\infty))\subset A$. Thus, $x(\cdot)\in S_{\!\!H}(t_0,x_0)$.
The latter, together with the definition of the value function,  implies that
\begin{equation}\label{glscfv-4}
V(t_0,x_0)\;\leq\;\int_{t_0}^{\infty}H^{\ast}(t,x(t),\dot{x}(t))\,dt.
\end{equation}
Combining \eqref{glscfv-2}, \eqref{glscfv-3} and \eqref{glscfv-4} we obtain $\tn{LS}\eqref{glscfv}\geq V(t_0,x_0)$.
\end{proof}

\begin{Prop}\label{vic-prop-H}
Assume that the condition $\tn{(h)}''_H$ holds. Additionally, we assume that the Hamiltonian $H$ satisfies the following condition.
\begin{enumerate}[leftmargin=33mm]
\item[$\pmb{\tn{(VIC)}_H:}$] There exists a set $C\subset(0,\infty)$ of full measure such that
\begin{align}
\D H^{\ast}(t,x,\cdot)\;\cap\; T_A(x)\neq\emptyset\;\;\;&\tn{for all}\;\;\;(t,x)\in C\times A,\label{vc-f}\\
 -\,\D H^{\ast}(t,x,\cdot)\;\subset\; T_A(x)\;\;\;&\tn{for all}\;\;\;(t,x)\in C\times A.\label{ic-b}
\end{align}
\end{enumerate}
Then for any $(t_0,x_0)\in[0,\infty)\times A$ there exists  a locally absolutely continuous function $x(\cdot)$ belongs to $S_{\!\!H}(t_0,x_0)$. Moreover, for any $s_0,t_0\in[0,\infty)$  with $s_0<t_0$ and $x_0\in A$, and for any absolutely continuous function $x(\cdot)$ defined on $[s_0,t_0]$ such that $\dot{x}(t)\in\D H^{\ast}(t,x(t),\cdot)$\linebreak for a.e. $t\in[s_0,t_0]$ with $x(t_0)=x_0$, one has $x([s_0,t_0])\subset A$.
\end{Prop}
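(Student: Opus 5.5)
The plan is to work with the set-valued map $F(t,x)=\D H^{\ast}(t,x,\cdot)$ of \eqref{def-FE} and to invoke standard viability and invariance theorems for differential inclusions that are measurable in time and Lipschitz in space. By Corollary \ref{wrow-wm}, assumption $\tn{(h)}''_H$ gives (M5), (M6), (M8) and (M10). Hence $F$ has nonempty compact convex values, is measurable in $t$, and satisfies $\textit{d\!l}_\mathcal{H}(F(t,x),F(t,y))\leq k(t)|x-y|$ with $k\in\mathscr{L}_{\mathrm{loc}}\subset L^1_{\mathrm{loc}}$. It also has linear growth $\|F(t,x)\|\leq\lambda(t,x)\leq c(t)(1+|x|)$ with $c\in L^1_{\mathrm{loc}}$. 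So $F$ is a Carath\'eodory-type Lipschitz map with integrably bounded linear growth, and solutions of $\dot x\in F(t,x)$ exist on every compact interval and cannot blow up, by Gronwall.

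For the first claim (forward viability), I would apply the viability theorem for time-measurable, space-Lipschitz inclusions, in the form of Frankowska--Plaskacz--Rze\.zuchowski. The tangential condition \eqref{vc-f}, $F(t,x)\cap T_A(x)\neq\emptyset$ for $t$ in the full-measure set $C$ and all $x\in A$, then yields on each $[t_0,T]$ a trajectory of $\dot x\in F(t,x)$ with $x(t_0)=x_0$ that stays in $A$. These results are usually stated with the Clarke or contingent cone and with $t$ in a full-measure set, which matches \eqref{vc-f} since $T_A(x)$ here is the contingent cone. To reach $[t_0,\infty)$, I would concatenate: from $x(T)\in A$, restart on $[T,T+1]$, and proceed inductively. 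The resulting function is locally absolutely continuous with $\dot x(t)\in F(t,x(t))$ a.e. and $x([t_0,\infty))\subset A$, so $x(\cdot)\in S_{\!\!H}(t_0,x_0)$.

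For the second claim (backward invariance), I would reverse time. Set $y(s)=x(t_0-s)$ for $s\in[0,t_0-s_0]$. Then $\dot y(s)\in G(s,y(s))$ with $G(s,y):=-F(t_0-s,y)$, and $y(0)=x_0\in A$. The map $G$ inherits all the regularity above, including measurability, Lipschitz continuity with $k(t_0-\cdot)$, and linear growth, and \eqref{ic-b} says $G(s,y)\subset T_A(y)$ for $s\in t_0-C$ (full measure) and $y\in A$. The invariance theorem for Lipschitz, time-measurable inclusions then states that every trajectory of $G$ starting in $A$ remains in $A$, so $y([0,t_0-s_0])\subset A$, i.e. $x([s_0,t_0])\subset A$.

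The main obstacle is this invariance step, because it must hold for every trajectory, not just one. If I cannot cite the result directly, I would prove it as follows. Fix a trajectory $y$. Since $A$ is closed, for a.e. $s$ pick $a(s)$ as a measurable projection of $y(s)$ onto $A$. By Filippov's lemma and (M6), choose $w(s)\in G(s,a(s))$ with $|\dot y(s)-w(s)|\leq k(t_0-s)\,\dist(y(s),A)$. Because $w(s)\in T_A(a(s))$, the upper Dini derivative of $g(s)=\dist(y(s),A)$ satisfies $D^+g(s)\leq k(t_0-s)\,g(s)$. This is the delicate point: proving the Dini estimate requires the contingent cone together with Lipschitz continuity of $G$, and since the tangency holds only for a.e. time, one has to pass through integral (Filippov-type) estimates rather than pointwise ones. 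Gronwall with $g(0)=0$ then gives $g\equiv0$.
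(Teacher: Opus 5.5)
Your proposal is correct and follows essentially the paper's route: with $F(t,x)=\D H^{\ast}(t,x,\cdot)$ and the constant tube $P\equiv A$, the paper applies the Frankowska--Plaskacz--Rze\.zuchowski viability theorem (their Thm.~4.7, with the tangency $[\{1\}\times F(t,x)]\cap T_{\G P}(t,x)\neq\emptyset$) on compact intervals, extends the trajectory to $[t_0,\infty)$, and then uses their invariance theorem (Thm.~4.10) for the backward claim. The only differences are cosmetic. The paper encodes the backward direction as $\{-1\}\times(-F(t,x))\subset T_{\G P}(t,x)$ rather than reversing time explicitly. Your fallback distance-function argument also works pointwise for a.e.\ $s$, with no integral Filippov-type estimates needed: $g$ is absolutely continuous, so the lower Dini bound $D_+g\leq k\,g$ that the contingent cone actually yields already gives $g'\leq k\,g$ a.e., and Gronwall then gives $g\equiv 0$.
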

\begin{proof}
Let $F(t,x)=\D H^{\ast}(t,x,\cdot)$ and $P(\cdot)\equiv A$. The latter, together with  \eqref{vc-f}, implies
$$\forall\,t\in C,\;\;\;\forall\,x\in P(t),\;\;\; [\{1\}\times F(t,x)]\cap T_{\G P\,}(t,x)\neq\emptyset.$$
From \cite[Thm. 4.7]{F-P-Rz}, for any $t_0,s_0\in[0,\infty)$  with $t_0<s_0$ and $x_0\in A$, there exists an absolutely continuous function $x(\cdot)$ defined on $[t_0,s_0]$ such that $\dot{x}(t)\in F(t,x(t))$ for a.e. $t\in[t_0,s_0]$ with $x(t_0)=x_0$ and $x([t_0,s_0])\subset A$. Hence, it is not difficult to deduce that for any  $t_0\in[0,\infty)$, $x_0\in A$  there exists a locally absolutely continuous function $x(\cdot)$ defined on $[t_0,\infty)$ such that $\dot{x}(t)\in F(t,x(t))$ for a.e. $t\in[t_0,\infty)$ with $x(t_0)=x_0$ and $x([t_0,\infty))\subset A$.

 By \eqref{ic-b} we deduce that
$$\forall\,t\in C,\;\;\;\forall\,x\in P(t),\;\;\; [\{-1\}\times (-F(t,x))]\subset T_{\G P\,}(t,x).$$
From \cite[Thm. 4.10]{F-P-Rz}, for any $s_0,t_0\in[0,\infty)$  with $s_0<t_0$ and $x_0\in A$, and for any absolutely continuous function $x(\cdot)$ defined on $[s_0,t_0]$ such that $\dot{x}(t)\in F(t,x(t))$ for a.e. $t\in[s_0,t_0]$ with $x(t_0)=x_0$, one has $x([s_0,t_0])\subset A$.
\end{proof}

\begin{Prop}\label{vic-prop-fl}
Assume that the triple $(U,f,l)$ satisfies $\tn{(h)}''$. Additionally, we assume that the functions $f$ and $U$ satisfy the following condition.
\begin{enumerate}[leftmargin=33mm]
\item[$\pmb{\tn{(VIC)}:}$] There exists a set $C\subset(0,\infty)$ of full measure such that \vspace{-2mm}
\begin{align*}
 f(t,x,U(t))\;\cap\; T_A(x)\neq\emptyset\;\;\;&\tn{for all}\;\;\;(t,x)\in C\times A,
 \\
 -f(t,x,U(t))\;\subset\; T_A(x)\;\;\;&\tn{for all}\;\;\;(t,x)\in C\times A.
\end{align*}
\end{enumerate}

\vspace{-4.5mm}
\pagebreak
\noindent Then for any  $t_0\in[0,\infty)$, $x_0\in A$  there exists a locally absolutely continuous function $x(\cdot)$ and a measurable function $u(\cdot)$  such that $(x,u)(\cdot)\in S_{\!\!f\,}(t_0,x_0)$. Moreover,  for any\linebreak $s_0,t_0\in[0,\infty)$  with $s_0<t_0$ and $x_0\in A$, and for any measurable function $u(\cdot)$  with $u(t)\in U(t)$ a.e. $t\in[s_0,t_0]$, there exists exactly one absolutely continuous function $x(\cdot)$ defined on $[s_0,t_0]$ such that $\dot{x}(t)=f(t,x(t),u(t))$ for a.e. $t\in[s_0,t_0]$ with $x(t_0)=x_0$ and $x([s_0,t_0])\subset A$.
\end{Prop}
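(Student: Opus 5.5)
The plan is to mirror the proof of Proposition \ref{vic-prop-H}, now applied to the set-valued map $F(t,x):=f(t,x,U(t))$ with $P(\cdot)\equiv A$.

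\emph{Regularity of $F$.} From $\tn{(h)}''$, and exactly as in the proof of Theorem \ref{rwwar-i}, $F$ has the following properties:
\begin{itemize}
\item its values are nonempty, compact and convex, since $(f,l)(t,x,U(t))$ is compact by (h2)--(h3) and convexity of the projection follows from (h4);
\item it is measurable in $t$, by \cite[Thm. 8.2.11]{A-F};
\item it is Lipschitz in $x$ with constant $k(t)$, by (h6);
\item it has linear growth $c(t)(1+|x|)$, by (h2).
\end{itemize}
These are the hypotheses needed to apply \cite[Thm. 4.7]{F-P-Rz} and \cite[Thm. 4.10]{F-P-Rz}.

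\emph{Existence.} The first condition of (VIC) gives $[\{1\}\times F(t,x)]\cap T_{\G P}(t,x)\neq\emptyset$ for $t\in C$, $x\in A$. By \cite[Thm. 4.7]{F-P-Rz} we obtain viable solutions of $\dot x\in F(t,x)$ on $[t_0,k]$ for every $k$. Concatenating these, starting each new piece at the endpoint of the previous one, yields a locally absolutely continuous $x(\cdot)$ on $[t_0,\infty)$ with $x([t_0,\infty))\subset A$. Since $\dot x(t)\in f(t,x(t),U(t))$ a.e., and $(t,u)\mapsto f(t,x(t),u)$ is Carathéodory while $U$ is measurable with closed values, the measurable selection theorem \cite[Thm. 8.2.10]{A-F} gives a measurable $u(t)\in U(t)$ with $\dot x(t)=f(t,x(t),u(t))$. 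Hence $(x,u)(\cdot)\in S_{\!\!f\,}(t_0,x_0)$.

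\emph{Backward part.} Fix a measurable control $u(\cdot)$ on $[s_0,t_0]$. The map $g(t,x):=f(t,x,u(t))$ is measurable in $t$, $k(t)$-Lipschitz in $x$ with $k\in\mathscr L_{\rm loc}\subset L^1_{\rm loc}$, and satisfies $|g(t,x)|\leq c(t)(1+|x|)$. Consequently the Cauchy problem $\dot x=g(t,x)$, $x(t_0)=x_0$, solved backward on $[s_0,t_0]$, has exactly one absolutely continuous solution: existence follows from Picard iteration or Carathéodory's theorem, since linear growth and Gronwall exclude blow-up, and uniqueness follows from Gronwall's lemma. This solution satisfies $\dot x(t)\in F(t,x(t))$ a.e. The second condition of (VIC) gives $\{-1\}\times(-F(t,x))\subset T_{\G P}(t,x)$. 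Then \cite[Thm. 4.10]{F-P-Rz}, backward invariance, yields $x([s_0,t_0])\subset A$.

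\emph{Main obstacle.} The step I expect to require the most care is checking that the cited results of \cite{F-P-Rz} apply. I believe they need only measurability in $t$, integrably Lipschitz dependence on $x$, and the tangency conditions on a full-measure set $C$, all of which hold here. Apart from that check, the argument is a direct transcription of the proof of Proposition \ref{vic-prop-H}, with uniqueness supplied by (h6).
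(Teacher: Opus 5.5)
Your proof is correct and is essentially the paper's argument. The paper uses Theorem \ref{rwwar-i} to identify $f(t,x,U(t))=\D H^{\ast}(t,x,\cdot)$ with $H$ satisfying $\tn{(h)}''_H$, so that (VIC) becomes $\tn{(VIC)}_H$ and Proposition \ref{vic-prop-H} (that is, \cite[Thms.~4.7, 4.10]{F-P-Rz}) together with \cite[Thm.~8.2.10]{A-F} applies; you instead verify the regularity of $F(t,x)=f(t,x,U(t))$ directly from $\tn{(h)}''$ and invoke the same theorems, and your Carath\'eodory/Gronwall argument for existence and uniqueness of the backward solution of $\dot x=f(t,x,u(t))$ makes explicit a step the paper leaves implicit.
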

\begin{proof}
In view of  Theorem \ref{rwwar-i}, we obtain that the assertions of Proposition \ref{vic-prop-H} are\linebreak satisfied. The latter, together with \cite[Thm. 8.2.10]{A-F}, implies that for any  $t_0\in[0,\infty)$, $x_0\in A$\linebreak  there exists a locally absolutely continuous function $x(\cdot)$ and a measurable function $u(\cdot)$  such that $(x,u)(\cdot)\in S_{\!\!f\,}(t_0,x_0)$. Moreover,  for any $s_0,t_0\in[0,\infty)$  with $s_0<t_0$ and $x_0\in A$, and for any measurable function $u(\cdot)$  with $u(t)\in U(t)$ a.e. $t\in[s_0,t_0]$, there exists exactly one absolutely continuous function $x(\cdot)$ defined on $[s_0,t_0]$ such that $\dot{x}(t)=f(t,x(t),u(t))$ for a.e. $t\in[s_0,t_0]$ with $x(t_0)=x_0$ and $x([s_0,t_0])\subset A$.
\end{proof}

\noindent\bf{Assumption (LB).} For any $r>0$ there exists  $\psi_r\in L^1([0,\infty);[0,\infty))$ such that for all $(t_0,x_0)\in[0,\infty)\times (A\cap r\B)$ and any $(x,u)(\cdot)\in S_{\!\!f\,}(t_0,x_0)$  one has $|l(t,x(t),u(t))|\leq\psi_r(t)$ for almost every $t\in[t_0,\infty)$.

\begin{Rem}\label{sb-rem}
If the triple $(U,f,l)$ satisfies the conditions $\tn{(h)}''$, (VIC), and (LB), then the value function $\mathcal{V}$ is real-valued on $[0,\infty)\times A$. If
$\D \mathcal{V}=[0,\infty)\times A$, then (LB) is a local version of (B). Observe, however, that (LB) by itself does not imply that $\mathcal{V}$ vanishes at infinity; we will nonetheless need it to establish convergence.
\end{Rem}

\begin{Prop}\label{guscfv}
Assume that $(U,f_n,l_n)$ and $(U,f,l)$ satisfy  $\tn{(h)}''$ and \tn{(LB)} with the same functions $\phi$, $c$, $\{\psi_r\}$ and the set $A$. Let $f_n$ and $f$ satisfy \tn{(VIC)}. Moreover, assume that $f_n(t,\cdot,\cdot)$ converge to $f(t,\cdot,\cdot)$ and $l_n(t,\cdot,\cdot)$ converge to $l(t,\cdot,\cdot)$ uniformly on compacts in $\R^{\scriptscriptstyle N}\times\R^{\scriptscriptstyle M}$ for every $t\in[0,\infty)$.
Let $\mathcal{V}_n$ and $\mathcal{V}$ be the value functions associated with $(U,f_n,l_n)$ and $(U,f,l)$, respectively. Then, for all $(t_0,x_0)\in[0,\infty)\times A$ there exist a sequence $x_{0i}\rightarrow x_0$ with  $x_{0i}\in A$ and an increasing sequence of natural numbers $\{n_i\}_{i\in\N}$ such that
\begin{equation}\label{guscfv-1}
\limsup\nolimits_{\,i\,\to\,\infty}\mathcal{V}_{n_i}(t_{0},x_{0i})\leq \mathcal{V}(t_0,x_0).
\end{equation}

\end{Prop}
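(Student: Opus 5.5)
The plan is to approximate a near-optimal pair for $\mathcal{V}$ at $(t_0,x_0)$ by admissible pairs for the perturbed systems $(U,f_n,l_n)$. The perturbed trajectories are built using the same control on a long interval $[t_0,T_i]$, integrated \emph{backward} from the terminal point so that they stay in $A$. On $[T_i,\infty)$ they are continued by any viable trajectory. Fix $(t_0,x_0)\in[0,\infty)\times A$ and choose $r>|x_0|+1$.

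\textbf{Step 1 (a near-optimal pair).} By Remark \ref{sb-rem}, (VIC) and (LB) make $\mathcal{V}$ real-valued on $[0,\infty)\times A$. Fix $\varepsilon_i\downarrow 0$. For each $i$ choose $(x^i,u^i)(\cdot)\in S_{\!\!f\,}(t_0,x_0)$ with $\int_{t_0}^\infty l(t,x^i,u^i)\,dt\leq \mathcal{V}(t_0,x_0)+\varepsilon_i$. By (LB), $|l(t,x^i(t),u^i(t))|\leq\psi_r(t)$, so this integral converges absolutely. Choose $T_i\uparrow\infty$ with $\int_{T_i}^\infty\psi_r\,dt<\varepsilon_i$.

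\textbf{Step 2 (backward construction on $[t_0,T_i]$).} For each $n$, let $y_n$ solve $\dot y_n=f_n(t,y_n,u^i(t))$ on $[t_0,T_i]$ with terminal condition $y_n(T_i)=x^i(T_i)\in A$. By the second part of Proposition \ref{vic-prop-fl} applied to $f_n$, this solution exists, is unique, and satisfies $y_n([t_0,T_i])\subset A$. The following estimate holds:
$$|\dot y_n-\dot x^i|\leq k(t)|y_n-x^i|+|f_n(t,x^i(t),u^i(t))-f(t,x^i(t),u^i(t))|.$$
The last term tends to $0$ for every $t$, since $f_n(t,\cdot,\cdot)\to f(t,\cdot,\cdot)$ uniformly on compacts and only the single point $(x^i(t),u^i(t))$ is involved. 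It is dominated by $2c(t)(1+|x^i(t)|)\in L^1([t_0,T_i])$, using (h2) with the common $c$. Backward Gronwall together with dominated convergence then give $\sup_{[t_0,T_i]}|y_n-x^i|\to0$ as $n\to\infty$. Consequently $(y_n(t),u^i(t))$ stays in a compact neighbourhood of $(x^i(t),u^i(t))$ for each fixed $t$. Using (h6) and the uniform convergence of $l_n$, we get $l_n(t,y_n,u^i)\to l(t,x^i,u^i)$ pointwise.

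\textbf{Step 3 (viable tail and cost estimate).} By the first part of Proposition \ref{vic-prop-fl} for $f_n$, there is a pair in $S_{\!\!f_n}(T_i,y_n(T_i))$. Concatenating it with $(y_n,u^i)$ on $[t_0,T_i]$ gives a pair $(\tilde x_n,\tilde u_n)\in S_{\!\!f_n}(t_0,y_n(t_0))$. For $n$ large, $|y_n(t_0)|<|x_0|+1<r$, so (LB) for $(U,f_n,l_n)$ with the common family $\psi_r$ gives $|l_n(t,\tilde x_n,\tilde u_n)|\leq\psi_r(t)$ along the whole concatenated trajectory. Dominated convergence on $[t_0,T_i]$ and the tail bound then give
$$\limsup_{n\to\infty}\mathcal{V}_n(t_0,y_n(t_0))\leq \int_{t_0}^{T_i}l(t,x^i,u^i)\,dt+\varepsilon_i\leq \mathcal{V}(t_0,x_0)+3\varepsilon_i .$$

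\textbf{Step 4 (diagonal choice).} Pick $n_i>n_{i-1}$ such that $|y_{n_i}(t_0)-x_0|<1/i$ and $\mathcal{V}_{n_i}(t_0,y_{n_i}(t_0))\leq\mathcal{V}(t_0,x_0)+4\varepsilon_i$. Setting $x_{0i}:=y_{n_i}(t_0)\in A$ yields \eqref{guscfv-1}.

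I expect the main obstacle to be Step 2. Controls are not assumed to lie in a compact set, and $k\in\mathscr{L}_{\mathrm{loc}}$ is only locally integrable. The convergence must therefore be argued pointwise in $t$ and closed by domination and Gronwall, rather than by a uniform estimate on compacts of $\R^{\scriptscriptstyle M}$. The backward-invariance half of (VIC) is exactly what guarantees that the perturbed trajectories remain in $A$ without any correction of the control.
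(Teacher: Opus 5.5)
Your architecture is the paper's: keep the control of a (near-)optimal pair on a long interval, integrate the perturbed dynamics backward from a point of the limit trajectory so that the backward-invariance half of (VIC) keeps it in $A$, attach a viable tail, bound the tail with (LB) and the common $\psi_r$, and diagonalize. Using $\varepsilon_i$-optimal pairs instead of the paper's exact optimal pair is a harmless, slightly more elementary variation; the paper gets its optimal pair from Proposition \ref{pfv-cvp} and Theorem \ref{eqw-fws}. Your per-$i$ limsup followed by the choice of $n_i$ is also fine.

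The gap is the Gronwall estimate in Step 2. You evaluate $f_n-f$ at the fixed point $(x^i(t),u^i(t))$, so the remaining term is $f_n(t,y_n,u^i)-f_n(t,x^i,u^i)$. The constant in front of $|y_n-x^i|$ is therefore the Lipschitz constant $k_n(t)$ of $f_n$ from (h6), not a fixed $k(t)$.

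The hypotheses make only $\phi$, $c$, $\{\psi_r\}$ and $A$ common. The constants $k_n$ may be unbounded in $n$: a perturbation like $\frac1n\sin(n^2x)$ converges uniformly to $0$ but has Lipschitz constant $n$. This is also the setting where the proposition is used, since Corollary \ref{ccsrep-cor} fixes no common $k$. Your estimate then only gives $\sup_{[t_0,T_i]}|y_n-x^i|\le \exp\big(\int_{t_0}^{T_i}k_n\big)\int_{t_0}^{T_i}|f_n-f|(t,x^i,u^i)\,dt$. The exponential factor is uncontrolled, so convergence does not follow.

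The repair is the paper's splitting:
\begin{itemize}
\item Write $\dot y_n-\dot x^i=[f_n-f](t,y_n,u^i)+[f(t,y_n,u^i)-f(t,x^i,u^i)]$, and bound the second bracket with the Lipschitz constant $k$ of the \emph{limit} $f$.
\item Backward Gronwall with the common $c$ and the fixed terminal value gives the $n$-independent bound $|y_n(t)|\le(1+|x^i(T_i)|)\exp\big(\int_{t_0}^{T_i}c\big)=:R$ on $[t_0,T_i]$.
\item Bound the first bracket by $\Gamma_n(t):=\sup_{|z|\le R}|f_n(t,z,u^i(t))-f(t,z,u^i(t))|$. Then $\Gamma_n(t)\to0$ pointwise by uniform convergence on the compact $R\B\times\{u^i(t)\}$, and $\Gamma_n\le 2c(t)(1+R)$.
\item Dominated convergence and Gronwall now give $\sup|y_n-x^i|\to0$.
\end{itemize}
Contrary to your closing remark, the single-point evaluation is exactly what must be avoided: you need uniform convergence on a compact set in $x$, though still pointwise in $t$. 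The same caveat applies to $l_n(t,y_n,u^i)\to l(t,x^i,u^i)$: use continuity or (h6) of the limit $l$, not of $l_n$. With this fix the rest of your argument goes through.
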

\begin{proof}
By Remark \ref{sb-rem} the value functions $\mathcal{V}_n$ and $\mathcal{V}$ are a real function on $[0,\infty)\times A$.\linebreak Fix $(t_0,x_0)\in[0,\infty)\times A$. In view of Proposition \ref{pfv-cvp} and Theorem \ref{eqw-fws} there exists an optimal pair $(x,u)(\cdot)$  of $\mathcal{V}$ at  $(t_0,x_0)$, i.e. $(x,u)(\cdot)\in S_{\!\!f\,}(t_0,x_0)$ and
\begin{equation}\label{guscfv-2}
\mathcal{V}(t_0,x_0)=\int_{t_0}^{\infty}l(t,x(t),u(t))\,dt
\end{equation}
Since $x([t_0,\infty))\subset A$, it follows that $x(i)\in A$ for all $i>t_0$ and $i\in\N$. By Proposition \ref{vic-prop-fl}  there exists a locally absolutely continuous function $\tilde{x}_n^i(\cdot)$ and a measurable function $\tilde{u}_n^i(\cdot)$  such that $(\tilde{x}_n^i,\tilde{u}_n^i)(\cdot)\in S_{\!\!f_n}(i,x(i))$. Moreover, there exists absolutely continuous function $\bar{x}_n^i(\cdot)$ defined on $[t_0,i]$ such that $\dot{\bar{x}}_n^i(t)=f_n(t,\bar{x}_n^i(t),u(t))$ for a.e. $t\in[t_0,i]$ with $\bar{x}_n^i(i)=x(i)$ and $\bar{x}_n^i([t_0,i])\subset A$. We define the following functions on the set $[t_0,\infty)$ by the formula
$$x_n^i(t):=
\left\{
\begin{array}{ccl}
\bar{x}_n^i(t) & \tn{if} & t\in[t_0,i],\\[1mm]
\tilde{x}_n^i(t) & \tn{if} & t\in[i,\infty),
\end{array}
\right.
\qquad
u_n^i(t):=
\left\{
\begin{array}{lcl}
u(t) & \tn{if} & t\in[t_0,i],\\[1mm]
\tilde{u}_n^i(t) & \tn{if} & t\in[i,\infty).
\end{array}
\right.$$
We observe that $(x_n^i,u_n^i)(\cdot)\in S_{\!\!f_n}(t_0,x_n^i(t_0))$, $x_n^i(i)=x(i)$ and $u_n^i(\cdot)=u(\cdot)$ on $[t_0,i]$ for all $i>t_0$, $i,n\in\N$. The latter, together with (h6), implies that
\begin{align*}
&|x_n^i(t)-x(t)| \;\leq\; \int_t^i|\dot{x}_n^i(s)-\dot{x}(s)|\,d\!s\;=\;\int_t^i|f_n(s,x_n^i(s),u(s))-f(s,x(s),u(s))|\,d\!s\\
&\leq\;\int_t^i|f_n(s,x_n^i(s),u(s))-f(s,x_n^i(s),u(s))|\,d\!s+\int_t^i|f(s,x_n^i(s),u(s))-f(s,x(s),u(s))|\,d\!s
\\
&\leq\;\int_{t_0}^i|f_n(s,x_n^i(s),u(s))-f(s,x_n^i(s),u(s))|\,d\!s+\int_t^ik(s)\,|x_n^i(s)-x(s)|\,d\!s
\end{align*}
for all $t\in[t_0,i]$, $i>t_0$, $i,n\in\N$, where $k(s)$ is the Lipschitz constant of the function $f(s,\cdot,u(s))$. By  Gronwall’s lemma, we conclude that
\begin{align}
&\sup\nolimits_{t\in[t_0,i]}|x_n^i(t)-x(t)|\;\leq\;\Delta_n^i\,\exp\big(\textstyle\int_{t_0}^ik(s)\,d\!s\big), \label{guscfv-3}\\[1mm]
& \Delta_n^i:=\int_{t_0}^i|f_n(s,x_n^i(s),u(s))-f(s,x_n^i(s),u(s))|\,d\!s, \label{guscfv-4}
\end{align}
for all $i>t_0$, $i,n\in\N$. By (h2) we get $|\dot{x}_n^i(t)|\leq c(t)(1+|x_n^i(t)|)$ for a.e. $t\in[t_0,i]$. The latter,\linebreak together with Gronwall’s lemma, implies that $|x_n^i(t)|\leq (1+|x(i)|)\exp(2\int_{t_0}^ic(s)\,d\!s)=:r_i$\linebreak for all $t\in[t_0,i]$.
Hence, $x_n^i(s)\in r_i\B$ for all $s\in[t_0,i]$, $i>t_0$, $i,n\in\N$. By \eqref{guscfv-4} we obtain\linebreak $\Delta_n^i\leq\int_{t_0}^i\Gamma_n^i(s)\,d\!s$ for all $i>t_0$, $i,n\in\N$, where $\Gamma_n^i(s):=\sup_{x\,\in\, r_i\B}|f_n(s,x,u(s))-f(s,x,u(s))|$. Since $|f_n(s,x,u(s))|\leq c(s)(1+r_i)$ and $|f(s,x,u(s))|\leq c(s)(1+r_i)$ for all $x\in r_i\B$, we have $\Gamma_n^i(s)\leq 2c(s)(1+r_i)$ for all  $s\in[t_0,i]$, $i>t_0$, $i,n\in\N$.
Since $f_n(s,\cdot,u(s))$ converge uniformly on compacts to $f(s,\cdot,u(s))$ for all  $s\in[t_0,i]$, we get $\lim_{n\to\infty}\Gamma_n^i(s)=0$ for all  $s\in[t_0,i]$, $i>t_0$.\linebreak Thus, by Lebesgue's dominated convergence theorem, we obtain $\lim_{n\to\infty }\int_{t_0}^i\Gamma_n^i(s)\,d\!s=0$\linebreak for all $i>t_0$. The latter, together with $\Delta_n^i\leq\int_{t_0}^i\Gamma_n^i(s)\,d\!s$, implies  $\lim_{n\to\infty}\Delta_n^i=0$ for all $i>t_0$. Hence, we can choose an increasing sequence of natural numbers $\{n_i\}_{i\in\N}$ such that  $\Delta_{n_i}^i\exp(\int_{t_0}^ik(s)\,d\!s)\leq\frac{1}{i}$ holds for all $i>t_0$.
Therefore, by the inequality \eqref{guscfv-3}, we conclude
\begin{equation}\label{guscfv-5}
\textstyle |x_{n_i}^i(t_0)-x_0|\;=\;|x_{n_i}^i(t_0)-x(t_0)|\;\leq\;\frac{1}{i}\;\;\tn{for all}\;\;i>t_0,\; i\in\N.
\end{equation}

Let $x_i(t):=x_{n_i}^i(t)$ and $u_i(t):=u_{n_i}^i(t)$ for all $t\in[t_0,\infty)$, $i>t_0$, $i\in\N$. We observe that $(x_i,u_i)(\cdot)\in S_{\!\!f_{n_i}}(t_0,x_i(t_0))$, $x_i(i)=x(i)$ and $u_i(\cdot)=u(\cdot)$ on $[t_0,i]$ for all $i>t_0$, $i\in\N$. Moreover,
in view of \eqref{guscfv-5}, we have $|x_i(t_0)-x(t_0)|\leq\frac{1}{i}$ for all $i>t_0$, $i\in\N$. In particular $x_i(t_0)\in r\B$ with $r:=1+|x_0|$ for all $i>t_0$, $i\in\N$.
In view of (LB) we have
\begin{equation}\label{guscfv-6}
|l_{n_i}(t,x_i(t),u_i(t))|\leq\psi_r(t)\;\;\tn{for a.e.}\;\;[t_0,\infty),\;\; \forall\,i>t_0.
\end{equation}
Let $t_0<T<i$. Then, in view of (h6), we have
\begin{align*}
&|x_i(t)-x(t)| \;\leq\;\frac{1}{i}+\int_{t_0}^t|\dot{x}_i(s)-\dot{x}(s)|\,d\!s\;=\;\frac{1}{i}+\int_{t_0}^t|f_{n_i}(s,x_i(s),u(s))-f(s,x(s),u(s))|\,d\!s\\
&\leq\;\frac{1}{i}+\int_{t_0}^t|f_{n_i}(s,x_i(s),u(s))-f(s,x_i(s),u(s))|\,d\!s+\int_{t_0}^t|f(s,x_i(s),u(s))-f(s,x(s),u(s))|\,d\!s\\
&\leq\;\frac{1}{i}+\int_{t_0}^T|f_{n_i}(s,x_i(s),u(s))-f(s,x_i(s),u(s))|\,d\!s+\int_{t_0}^tk(s)\,|x_i(s)-x(s)|\,d\!s
\end{align*}

\vspace{-3mm}
\pagebreak

\noindent for all $t\in[t_0,T]$, where $k(s)$ is the Lipschitz constant of  $f(s,\cdot,u(s))$. By  Gronwall’s lemma,
\begin{align}
&\textstyle\sup\nolimits_{t\in[t_0,T]}|x_i(t)-x(t)|\;\leq\;\big(\frac{1}{i}+\Delta_i^T\big)\,\exp\big(\int_{t_0}^Tk(s)\,d\!s\big), \label{guscfv-7}\\[1mm]
& \Delta_i^T:=\int_{t_0}^T|f_{n_i}(s,x_i(s),u(s))-f(s,x_i(s),u(s))|\,d\!s, \label{guscfv-8}
\end{align}
In view of (h2) we have $|\dot{x}_i(t)|\leq c(t)(1+|x_i(t)|)$ for a.e. $t\in[t_0,T]$. The latter, together with Gronwall’s lemma, implies  $|x_i(t)|\leq (1+r)\exp(2\int_{t_0}^Tc(s)\,d\!s)=:R_T$ for all $t\in[t_0,T]$.
Hence,\linebreak $x_i(s)\in R_T\B$ for all $s\in[t_0,T]$. In view of \eqref{guscfv-8} we obtain that $\Delta_i^T\leq\int_{t_0}^T\Gamma_i^T(s)\,d\!s$, where $\Gamma_i^T(s):=\sup_{x\,\in\, R_T\B}|f_{n_i}(s,x,u(s))-f(s,x,u(s))|$. Since $|f_{n_i}(s,x,u(s))|\leq c(s)(1+R_T)$ and $|f(s,x,u(s))|\leq c(s)(1+R_T)$ for all $x\in R_T\B$, we have $\Gamma_i^T(s)\leq 2c(s)(1+R_T)$ for all\linebreak  $s\in[t_0,T]$.
Since $f_{n_i}(s,\cdot,u(s))$ converge uniformly on compacts to $f(s,\cdot,u(s))$ for all\linebreak $s\in[t_0,T]$, we obtain $\lim_{i\to\infty}\Gamma_i^T(s)=0$ for all $s\in[t_0,T]$. Therefore, by Lebesgue's\linebreak dominated convergence theorem, we get $\lim_{i\to\infty }\int_{t_0}^T\Gamma_i^T(s)\,d\!s=0$. The latter, together with\linebreak $\Delta_i^T\leq\int_{t_0}^T\Gamma_i^T(s)\,d\!s$, implies $\lim_{i\to\infty}\Delta_i^T=0$. So, by  \eqref{guscfv-7}, $\lim x_i(s)=x(s)$ for all $s\in[t_0,T]$.\linebreak
Since $l_{n_i}(s,\cdot,u(s))$ converge uniformly on compacts to $l(s,\cdot,u(s))$ for all $s\in[t_0,T]$,\linebreak we obtain that $\lim_{i\to\infty}l_{n_i}(s,x_i(s),u(s))=l(s,x(s),u(s))$ for all $s\in[t_0,T]$. Additionally,\linebreak $|l_{n_i}(s,x_i(s),u(s))|\leq c(s)(1+R_T)$ for all $s\in[t_0,T]$. Therefore, by Lebesgue's dominated convergence theorem, we have
\begin{equation}\label{guscfv-9}
\lim_{i\to\infty}\int_{t_0}^Tl_{n_i}(s,x_i(s),u(s))\,d\!s=\int_{t_0}^Tl(s,x(s),u(s))\,d\!s.
\end{equation}
Combining \eqref{guscfv-6}, \eqref{guscfv-9} and $(x_i,u_i)(\cdot)\in S_{\!\!f_{n_i}}(t_0,x_i(t_0))$ we obtain
\begin{eqnarray}\label{guscfv-10}
\limsup_{\,i\,\to\,\infty}\mathcal{V}_{n_i}(t_{0},x_i(t_0)) &\leq & \limsup_{\,i\,\to\,\infty}\int_{t_0}^\infty l_{n_i}(s,x_i(s),u_i(s))\,d\!s\nonumber\\
&\leq & \lim_{i\to\infty}\int_{t_0}^Tl_{n_i}(s,x_i(s),u(s))\,d\!s+ \int_{T}^\infty \psi_r(t)\,d\!s\nonumber\\
&=& \int_{t_0}^Tl(s,x(s),u(s))\,d\!s+ \int_{T}^\infty \psi_r(t)\,d\!s.
\end{eqnarray}
Passing to the limit in \eqref{guscfv-10} as $T\to\infty$ and using \eqref{guscfv-2}, we get the following inequality $\limsup_{\,i\,\to\,\infty}\mathcal{V}_{n_i}(t_{0},x_i(t_0))\leq \mathcal{V}(t_0,x_0)$.
By setting $x_{0i}:=x_i(t_0)$, we obtain  \eqref{guscfv-1}.
\end{proof}

\begin{Def}\label{sch-def}
We say that  $H$ belongs to $\mathscr{H}(\theta,\lambda,\{\psi_r\},\phi,c,A)$ if $\textit{I\!H}$ given by the formula
\begin{equation}\label{sch-def-1}
\textit{I\!H}(t,x,p):=[\theta(t)]^{-1}H(t,x,\theta(t)p)
\end{equation}
satisfies $\tn{(h)}''_{\textit{I\!H}}$ with the functions $\lambda$, $\phi$, $c$, and the set $A$.  Additionally, we require that
\begin{equation}\label{sch-def-2}
\theta(t)\lambda(t,x(t))\leq\psi_r(t)\;\tn{for a.e.}\; t\in[t_0,\infty),
\end{equation}
for any $x(\cdot)\in S_{\!\!\textit{I\!H}}(t_0,x_0)$, and for all $(t_0,x_0)\in[0,\infty)\times (A\cap r\B)$, and for every $r>0$, where $\theta\in L^\infty([0,\infty);\R^+)$ and $\psi_r\in L^1([0,\infty);[0,\infty))$.
\end{Def}

\vspace{-4.5mm}
\pagebreak

\begin{Rem}\label{sch-rem}
If $A$  is a non-degenerate compact interval in  $\R$, then the Hamiltonian $H$ from Example \ref{Ex1} satisfies $\tn{(OPC)}_H$ and belongs to  $\mathscr{H}(e^{-\gamma t},\lambda,\{\psi_r\},\phi,c,A)$ with
\begin{align*}
&\lambda(t,x):=\alpha(t)|x|+\alpha(t)+1,\;\;\;\;\;c(t):=\alpha(t)+1,\\
& \psi_r(t):=e^{-\gamma t}\alpha(t)\|A\|+e^{-\gamma t}\alpha(t)+e^{-\gamma t},\;\,\phi\equiv 0.
\end{align*}

If $A$ is an unbounded closed interval in $\R$, then the Hamiltonian $H$ from Example \ref{Ex1} satisfies $\tn{(OPC)}_H$ and belongs to  $\mathscr{H}(e^{-\gamma t},\lambda,\{\psi_r\},\phi,c,A)$ with $\gamma>\|\alpha\|_{\infty}$ and
\begin{align*}
&\lambda(t,x):=\alpha(t)|x|+\alpha(t)+1,\;\,c(t):=\alpha(t)+1,\;\;\phi\equiv 0,\\
& \psi_r(t):=(r+t)\,\alpha(t)\,e^{(\|\alpha\|_{\infty}-\gamma)\,t}+\,\alpha(t)\,e^{-\gamma t}+\,e^{-\gamma t}.
\end{align*}
We show that \eqref{sch-def-2} holds in this case. Let $x(\cdot)\in S_{\!\!\textit{I\!H}}(t_0,x_0)$. Then $|\dot{x}(t)|\leq\alpha(t)|x(t)|+1$.\linebreak
By Gronwall’s lemma, we obtain
$|x(t)|\leq (|x_0|+t-t_0)\exp(\|\alpha\|_{\infty}(t-t_0))$.
The latter, together with $x_0\in r\B$, implies that $|x(t)|\leq (r+t)\exp(\|\alpha\|_{\infty}t)$. Therefore, we have
$$e^{-\gamma t}\lambda(t,x(t))\leq e^{-\gamma t}\big(\alpha(t)(r+t)e^{\|\alpha\|_{\infty}t}+\,\alpha(t)+1\big)=\psi_r(t).$$

Moreover, note that if $H\in\mathscr{H}(\theta,\lambda,\{\psi_r\},\phi,c,A)$, then $H$ satisfies  $\tn{(h)}''_H$ with $\tau\lambda$, $-\tau|\phi|$, $\tau c$, where $\tau:=1+\|\theta\|_{\infty}$.
\end{Rem}

\begin{Prop}\label{sch-prop}
Assume that the Hamiltonian $H$ belongs to $\mathscr{H}(\theta,\lambda,\{\psi_r\},\phi,c,A)$. Then there exists a representation $(\B,f,l)$ of  $H$ satisfying $\tn{(LB)}$ with $\{20\psi_r\}$ and satisfying $\tn{(h)}''$ with $U\equiv\B$, $-\tau|\phi|$,  $\tau c$, where $\tau:=40(1+N)(1+\|\theta\|_{\infty})$. Moreover, if $H$ satisfies  $\tn{(OPC)}_H$, then the triple $(\B,f,l)$ satisfies $\tn{(OPC)}$.
\end{Prop}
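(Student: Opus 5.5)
The plan is to work with the rescaled Hamiltonian $\textit{I\!H}$ from \eqref{sch-def-1}, apply Theorem \ref{rep-t} to it, and then rescale the running cost back by $\theta(t)$. By Definition \ref{sch-def}, $\textit{I\!H}$ satisfies $\tn{(h)}''_{\textit{I\!H}}$ with $\lambda,\phi,c$ and $A$. Theorem \ref{rep-t} therefore gives Carath\'eodory functions $\tilde f,\tilde l$ with $(\B,\tilde f,\tilde l)$ an epigraphical representation of $\textit{I\!H}$ and $\tilde f(t,x,\B)=\D \textit{I\!H}^{\ast}(t,x,\cdot)$. It also gives the bounds (A1), $|\tilde f|+|\tilde l|\leq 20\lambda$ from (A2), and $\tilde l\geq\phi$ from (A4).

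The key computation uses $\theta(t)>0$:
$$H(t,x,p)=\theta(t)\,\textit{I\!H}(t,x,p/\theta(t))=\sup\nolimits_{u\in\B}\{\langle p,\tilde f(t,x,u)\rangle-\theta(t)\tilde l(t,x,u)\}.$$
Similarly, $H^{\ast}(t,x,v)=\theta(t)\,\textit{I\!H}^{\ast}(t,x,v)$. Hence $\D H^{\ast}(t,x,\cdot)=\D\textit{I\!H}^{\ast}(t,x,\cdot)$, and the graph/epigraph inclusions (A3) transfer under the map $(v,r)\mapsto(v,\theta(t) r)$. So we set $f:=\tilde f$ and $l:=\theta(t)\tilde l$. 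Then $(\B,f,l)$ is an epigraphical representation of $H$ with $f(t,x,\B)=\D H^{\ast}(t,x,\cdot)$. Measurability in $t$ and continuity in $(x,u)$ are preserved because $\theta$ is measurable.

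Next I check $\tn{(h)}''$ item by item, as in the proof of Theorem \ref{rwwar}.
\begin{itemize}
\item (h1): $l\geq\theta\phi\geq-\|\theta\|_\infty|\phi|\geq-\tau|\phi|$.
\item (h2) and (h5): $|f|+|l|\leq(1+\|\theta\|_\infty)20\lambda$, combined with (H3) for $\textit{I\!H}$, gives the constants $\tau c$ and $\tau q$.
\item (h6): from (A1), $|f(t,x,u)-f(t,y,u)|+|l(t,x,u)-l(t,y,u)|\leq 40(N+1)(1+\|\theta\|_\infty)k(t)|x-y|$, which is within the factor $\tau$.
\item (h3) and (h4): these follow exactly as in Theorem \ref{rwwar}. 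Compactness of $(f,l)(t,x,\B)$ and Hausdorff continuity come from \eqref{rwwar-1}. The set $\{(f,l+r)\}$ equals $\E H^{\ast}(t,x,\cdot)$ and is therefore convex.
\end{itemize}
$\tn{(OPC)}$ follows from $\tn{(OPC)}_H$ because $f(t,x,\B)=\D H^{\ast}(t,x,\cdot)$.

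For (LB), take $(t_0,x_0)\in[0,\infty)\times(A\cap r\B)$ and $(x,u)(\cdot)\in S_{\!\!f}(t_0,x_0)$. Then $\dot x(t)\in f(t,x(t),\B)=\D\textit{I\!H}^{\ast}(t,x(t),\cdot)$ and $x$ stays in $A$, so $x(\cdot)\in S_{\!\!\textit{I\!H}}(t_0,x_0)$. Hence $|l(t,x(t),u(t))|\leq\theta(t)\cdot 20\lambda(t,x(t))\leq 20\psi_r(t)$ a.e. by \eqref{sch-def-2}.

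The main point needing care is the rescaling identity for $H$ and $H^{\ast}$. One must confirm that the epigraphical property, and not just the representation formula, survives multiplying $l$ by $\theta(t)$; this works because $\theta(t)>0$. One must also keep the constants consistent with the stated $\tau$. Everything else is bookkeeping already carried out in Theorems \ref{rep-t} and \ref{rwwar}.
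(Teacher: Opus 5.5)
Your proposal is correct and follows essentially the same route as the paper. You apply Theorem \ref{rep-t} to $\textit{I\!H}$, set $l:=\theta(t)\,\tilde l$, and use $H^{\ast}=\theta(t)\,\textit{I\!H}^{\ast}$ to get $f(t,x,\B)=\D H^{\ast}(t,x,\cdot)$, which yields $\tn{(OPC)}$ and $S_{\!\!\textit{I\!H}}=S_{\!\!H}$. You then obtain $\tn{(LB)}$ from (A2) and \eqref{sch-def-2} and verify $\tn{(h)}''$ as in Theorem \ref{rwwar}, spelling out details the paper leaves implicit: the transfer of (A3) under $(v,r)\mapsto(v,\theta(t)r)$, the inclusion $x(\cdot)\in S_{\!\!\textit{I\!H}}(t_0,x_0)$, and the constant bookkeeping giving $\tau$.
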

\begin{proof}
In view of Theorem \ref{rep-t}, there exist functions $f:[0,\infty)\times\R^{\scriptscriptstyle N}\times\R^{\scriptscriptstyle N+1}\to\R^{\scriptscriptstyle N}$ and
$\textit{l\!l}:[0,\infty)\times\R^{\scriptscriptstyle N}\times\R^{\scriptscriptstyle N+1}\to\R$,  measurable in $\,t\,$ for all $(x,u)\in\R^{\scriptscriptstyle N}\times\R^{\scriptscriptstyle N+1}$ and continuous in $(x,u)$ for all $t\in[0,\infty)$, such that the triple $(\B,f,\textit{l\!l})$ is a representation of $\textit{I\!H}$ and $f(t,x,\B)=\D\textit{I\!H}^{\ast}(t,x,\cdot)$ for all $t\in[0,\infty)$, $x\in\R^{\scriptscriptstyle N}$. Additionally, the conditions  (A1)-(A5) from\linebreak Theorem \ref{rep-t} are satisfied. By \eqref{sch-def-1}, we have $H(t,x,p)=\theta(t)\textit{I\!H}(t,x,p/\theta(t))$. Thus,
\begin{eqnarray*}
H(t,x,p)
&=& \theta(t)\sup\nolimits_{u\in \B}\,\{\,\langle\,p/\theta(t),f(t,x,u)\,\rangle-\textit{l\!l}(t,x,u)\,\}\\
&=&\sup\nolimits_{u\in \B}\,\{\,\langle\, p,f(t,x,u)\,\rangle-\theta(t)\textit{l\!l}(t,x,u)\,\}.
\end{eqnarray*}
Therefore, the triple $(\B,f,l)$ is a representation of $H$, where $l(t,x,u):=\theta(t)\textit{l\!l}(t,x,u)$. Since $H^{\ast}(t,x,v)=\theta(t)\textit{I\!H}^{\ast}(t,x,v)$, we get $\D\textit{I\!H}^{\ast}(t,x,\cdot)=\D H^{\ast}(t,x,\cdot)$. In particular, $f(t,x,\B)=\D H^{\ast}(t,x,\cdot)$ and  $S_{\!\!\textit{I\!H}}(t_0,x_0)=S_{\!\!H}(t_0,x_0)$. Hence, the condition $\tn{(OPC)}$ follows directly from the condition $\tn{(OPC)}_H$. Moreover, in view of (A2) and \eqref{sch-def-2}, we obtain
$$|l(t,x(t),u(t))|=\theta(t)|\textit{l\!l}(t,x(t),u(t))|\leq 20\,\theta(t)\lambda(t,x(t))\leq 20\,\psi_r(t)\;\tn{for a.e.}\; t\in[t_0,\infty),$$
for all $(x,u)(\cdot)\in S_{\!\!f\,}(t_0,x_0)$ and  $(t_0,x_0)\in[0,\infty)\times(A\cap r\B)$,
Therefore, the triple $(\B,f,l)$ satisfies $\tn{(LB)}$ with $\{20\psi_r\}$. Similarly as in Theorem \ref{rwwar}, it can be shown that the triple $(\B,f,l)$ satisfies $\tn{(h)}''$ with $U(\cdot)\equiv\B$, $-\tau|\phi|$, $\tau c$, where $\tau:=40(1+N)(1+\|\theta\|_{\infty})$.
\end{proof}

From \cite[Thm. 6.6]{AM}, we obtain the following theorem.

\begin{Th}\label{ccsrep-thm}
Assume that $H_n$ satisfies $\tn{(h)}_{H_n}''$ with $\lambda_n$ and $H$ satisfies $\tn{(h)}_{H}''$ with $\lambda$.\linebreak Consider the representations $(\B,f_n,l_n)$ and $(\B,f,l)$ of $H_n$ and $H$, respectively, defined as in the proof of Theorem~\ref{rep-t}. If $H_n(t,\cdot,\cdot)$ converge uniformly on compacts to $H(t,\cdot,\cdot)$\linebreak  and $\lambda_n(t,\cdot)$ converge uniformly on compacts to $\lambda(t,\cdot)$  for every $t\in[0,\infty)$, then $f_n(t,\cdot,\cdot)$\linebreak converge to $f(t,\cdot,\cdot)$ and $l_n(t,\cdot,\cdot)$ converge to $l(t,\cdot,\cdot)$ uniformly on compacts in $\R^{\scriptscriptstyle N}\times\R^{\scriptscriptstyle N+1}$  for every $t\in[0,\infty)$.
\end{Th}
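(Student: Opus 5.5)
The plan is to follow the construction in the proof of Theorem~\ref{rep-t} term by term. There $(f_n,l_n)=\mathrm{e}_n$ and $(f,l)=\mathrm{e}$, where
$$\mathrm{e}_n(t,x,u)=S_{\!\scriptscriptstyle N+1}\big[E_n(t,x)\cap\B\big(\omega_n(t,x)u,2\dist(\omega_n(t,x)u,E_n(t,x))\big)\big],$$
with $E_n(t,x)=\E H_n^{\ast}(t,x,\cdot)$ and $\omega_n=2\lambda_n$. The first step fixes $t$ and shows that the set-valued maps $x\rightsquigarrow E_n(t,x)$ converge to $E(t,x)$ in a sense compatible with the Steiner selection. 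Since $H_n(t,\cdot,\cdot)\to H(t,\cdot,\cdot)$ uniformly on compacts and all functions are real-valued and convex in $p$, the Wijsman/Attouch theorem on continuity of the Legendre--Fenchel transform under epi-convergence (\cite[Thm.~11.34]{R-W}) gives the following. For every sequence $x_n\to x$, $H_n^{\ast}(t,x_n,\cdot)$ epi-converges to $H^{\ast}(t,x,\cdot)$. Indeed, $H_n(t,x_n,\cdot)\to H(t,x,\cdot)$ uniformly on compacts, hence epi-converges, because $H(t,\cdot,\cdot)$ is continuous. Equivalently, $E_n(t,x_n)\to E(t,x)$ in the Painlevé--Kuratowski sense.

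The second step upgrades this to Hausdorff-type control on bounded regions, uniformly in $(x,u)$ on compacts. By (H6)--(H7) and (M10), $\|\G H_n^{\ast}(t,x,\cdot)\|\le\omega_n(t,x)$. Since $\lambda_n(t,\cdot)\to\lambda(t,\cdot)$ uniformly on compacts, these bounds are uniform on compact $x$-sets. For convex closed sets, Painlevé--Kuratowski convergence implies that $E_n\cap R\B\to E\cap R\B$ in Hausdorff distance whenever $E\cap\mathrm{int}(R\B)\neq\emptyset$. This is satisfied here, because $E(t,x)$ contains points of norm at most $\omega(t,x)$ and is unbounded upward. It follows that $\dist(\omega_n(t,x_n)u_n,E_n(t,x_n))\to\dist(\omega(t,x)u,E(t,x))$, and that the truncated sets
$$C_n=E_n(t,x_n)\cap\B\big(\omega_n u_n,2\dist(\omega_n u_n,E_n)\big)$$
converge in Hausdorff distance to
$$C=E(t,x)\cap\B\big(\omega u,2\dist(\omega u,E)\big).$$

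The third step uses continuity of the Steiner selection with respect to the Hausdorff metric on nonempty compact convex sets (\cite[p.~366]{A-F}, in the Lipschitz form of \cite[Lem.~5.1]{AM}). This gives $\mathrm{e}_n(t,x_n,u_n)\to\mathrm{e}(t,x,u)$ for all sequences $(x_n,u_n)\to(x,u)$. A standard argument then shows that pointwise convergence along all convergent sequences is equivalent to uniform convergence on compacts of $\R^{\scriptscriptstyle N}\times\R^{\scriptscriptstyle N+1}$. Since $f_n,l_n$ are the components of $\mathrm{e}_n$, the theorem follows. This is precisely the content of \cite[Thm.~6.6]{AM}, adapted to $\omega=2\lambda$ in place of the $\omega$ used there. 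The remark after Corollary~\ref{wrow-wm}, about allowing $\omega\ge0$, covers this change.

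The main obstacle is the degenerate case $\dist(\omega u,E)=0$, where the ball has radius zero. There the intersection $C$ is the single point $\omega u$, and Hausdorff convergence of the intersections $C_n$ is delicate, since an intersection of a ball with a convex set is only continuous under a nonempty-interior condition. I would handle this case directly. Every point of $C_n$ lies within $\omega_n u_n+2\dist(\omega_n u_n,E_n)\B$, and this ball shrinks to $\{\omega u\}$. So any selection from $C_n$ converges to $\omega u=\mathrm{e}(t,x,u)$, and Hausdorff continuity of the intersections is not needed in this case. In the nondegenerate case the ball has positive radius, and its center lies outside $E$. I would then check that $E$ meets the interior of the ball, using a point of $E$ strictly closer to the center than $2\dist$. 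With that interior point, the standard lemma on continuity of intersections applies.
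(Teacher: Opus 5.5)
Your proposal is correct and takes essentially the same route as the paper, which obtains the statement directly from \cite[Thm.~6.6]{AM}; you reconstruct that argument step by step: Wijsman's theorem giving $E_n(t,x_n)\to E(t,x)$, convergence of the truncated sets $E_n(t,x_n)\cap\B(\omega_n u_n,2\dist(\omega_n u_n,E_n(t,x_n)))$ with the radius-zero case handled separately, continuity of the Steiner selection, and continuous convergence implying uniform convergence on compacts. The only new ingredient relative to \cite{AM} is that $\omega_n=2\lambda_n\to\omega=2\lambda$ locally uniformly, and that is exactly where you use the hypothesis on $\lambda_n$.
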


From Theorem \ref{ccsrep-thm} and the proof of Proposition \ref{sch-prop}, we get the following corollary.

\begin{Cor}\label{ccsrep-cor}
Let $H_n\in\mathscr{H}(\theta,\lambda_n,\{\psi_r\},\phi,c,A)$ and $H\!\in\!\mathscr{H}(\theta,\lambda,\{\psi_r\},\phi,c,A)$. Moreover,\linebreak assume that $H_n(t,\cdot,\cdot)$ converge uniformly on compacts to $H(t,\cdot,\cdot)$ and $\lambda_n(t,\cdot)$ converge uniformly on compacts to $\lambda(t,\cdot)$  for all $t\in[0,\infty)$. Then there exist the representations $(\B,f_n,l_n)$ and $(\B,f,l)$ of $H_n$ and $H$, respectively, satisfying $\tn{(h)}''$ and $\tn{(LB)}$ with same $U\equiv\B$, $A$, $\{20\psi_r\}$, $-\tau|\phi|$, $\tau c$,  where $\tau:=40(1+N)(1+\|\theta\|_{\infty})$. Moreover, $f_n(t,\cdot,\cdot)$ converge to\linebreak $f(t,\cdot,\cdot)$ and $l_n(t,\cdot,\cdot)$ converge to $l(t,\cdot,\cdot)$ uniformly on compacts in $\R^{\scriptscriptstyle N}\!\times\R^{\scriptscriptstyle N+1}$ for all $t\!\in\![0,\!\infty)$.
\end{Cor}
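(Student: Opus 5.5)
The plan is to build the representations from the rescaled Hamiltonians and then transfer the convergence back. Put $\textit{I\!H}_n(t,x,p):=[\theta(t)]^{-1}H_n(t,x,\theta(t)p)$ and $\textit{I\!H}(t,x,p):=[\theta(t)]^{-1}H(t,x,\theta(t)p)$, as in Definition \ref{sch-def}. By assumption these satisfy $\tn{(h)}''_{\textit{I\!H}_n}$ with $\lambda_n$ and $\tn{(h)}''_{\textit{I\!H}}$ with $\lambda$, with common $\phi$, $c$, $A$. Theorem \ref{rep-t} then gives representations $(\B,f_n,\textit{l\!l}_n)$ of $\textit{I\!H}_n$ and $(\B,f,\textit{l\!l})$ of $\textit{I\!H}$, constructed as in its proof. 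Following the proof of Proposition \ref{sch-prop}, set $l_n:=\theta\,\textit{l\!l}_n$ and $l:=\theta\,\textit{l\!l}$. The same argument shows that $(\B,f_n,l_n)$ and $(\B,f,l)$ represent $H_n$ and $H$. They satisfy (LB) with $\{20\psi_r\}$, by (A2) and \eqref{sch-def-2}. They satisfy $\tn{(h)}''$ with $U\equiv\B$, $-\tau|\phi|$, $\tau c$. The constants $20$, $\tau$ and the set $A$ depend only on $N$, $\theta$ and the common data, not on $n$, so they are the same for all $n$.

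For the convergence, fix $t$; then $\theta(t)>0$ is a fixed number. The map $(x,p)\mapsto(x,\theta(t)p)$ sends compacts to compacts. Hence uniform convergence of $H_n(t,\cdot,\cdot)$ to $H(t,\cdot,\cdot)$ on compacts gives the same for $\textit{I\!H}_n(t,\cdot,\cdot)\to\textit{I\!H}(t,\cdot,\cdot)$, after multiplying by the constant $[\theta(t)]^{-1}$. The $\lambda_n$ are unchanged by the rescaling and converge to $\lambda$ by assumption. Theorem \ref{ccsrep-thm} applied to $\textit{I\!H}_n$ and $\textit{I\!H}$ then yields $f_n(t,\cdot,\cdot)\to f(t,\cdot,\cdot)$ and $\textit{l\!l}_n(t,\cdot,\cdot)\to\textit{l\!l}(t,\cdot,\cdot)$ uniformly on compacts in $\R^{\scriptscriptstyle N}\times\R^{\scriptscriptstyle N+1}$. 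Multiplying by $\theta(t)$ transfers the second convergence to $l_n\to l$.

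The main obstacle is bookkeeping rather than analysis. One must check that Theorem \ref{ccsrep-thm} really applies to the rescaled Hamiltonians with the very representations used in Proposition \ref{sch-prop}, namely the Steiner-selection construction of Theorem \ref{rep-t} with $\omega=2\lambda_n$. One must also check that every bound in $\tn{(h)}''$ is uniform in $n$. The constants in (A1) and (A2) are universal multiples of $k$ and $\lambda_n$. Condition (h5) uses $q$ from (H3). Condition (h6) uses $k$. Both are part of the common data in $\tn{(h)}''_{\textit{I\!H}_n}$ required by the class $\mathscr{H}$, so they do not depend on $n$.
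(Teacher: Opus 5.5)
Your argument is correct and follows the paper's own route. The paper obtains the corollary in one line from Theorem~\ref{ccsrep-thm} and the proof of Proposition~\ref{sch-prop}, and its steps are the ones you give. You build $(\B,f_n,\textit{l\!l}_n)$ and $(\B,f,\textit{l\!l})$ for the rescaled $\textit{I\!H}_n,\textit{I\!H}$ via Theorem~\ref{rep-t}, set $l_n=\theta\,\textit{l\!l}_n$ and $l=\theta\,\textit{l\!l}$, and read off $\tn{(h)}''$ and (LB) with $-\tau|\phi|$, $\tau c$, $\{20\psi_r\}$. You then pass the convergence through the fixed factor $\theta(t)>0$.

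One correction to your last sentence: membership in $\mathscr{H}(\theta,\lambda_n,\{\psi_r\},\phi,c,A)$ fixes only $\theta,\lambda_n,\{\psi_r\},\phi,c,A$. So the functions $k_n,q_n$ in (H3)--(H5) may depend on $n$. This does no harm, since the corollary claims uniformity in $n$ only for $U$, $A$, $\{20\psi_r\}$, $-\tau|\phi|$ and $\tau c$.
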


From Propositions \ref{glscfv} and \ref{guscfv}, and Corollary \ref{ccsrep-cor}, we get the following theorem.

\begin{Th}\label{thmsr}
Assume that $H_n\in\mathscr{H}(\theta,\lambda_n,\{\psi_r\},\phi,c,A)$ satisfies the condition $\tn{(VIC)}_{H_n}$ and $H\in\mathscr{H}(\theta,\lambda,\{\psi_r\},\phi,c,A)$ satisfies the condition $\tn{(VIC)}_{H}$. Additionally, let $H_n(t,\cdot,\cdot)$\linebreak converge uniformly on compacts to $H(t,\cdot,\cdot)$ and $\lambda_n(t,\cdot)$ converge uniformly on compacts to $\lambda(t,\cdot)$  for all $t\in[0,\infty)$. If $V_n$ and $V$ are the value functions associated with $H_n^{\ast}$ and $H^{\ast}$, respectively, then \tn{e}-$\liminf_{n\to\infty}V_n=V$.
\end{Th}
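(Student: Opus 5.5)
We verify the two defining conditions of lower epi-convergence, namely (i) and (ii) from Section~\ref{sec-prel}, for $V_n$ and $V$ on $[0,\infty)\times A$. Condition (i) comes from Proposition~\ref{glscfv}; condition (ii) comes from Proposition~\ref{guscfv} applied to suitable representations, transported back to $V_n,V$ through Theorem~\ref{eqw-fws}.

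\emph{Step 1 (liminf inequality).} We first check that $H_n,H$ satisfy the hypotheses of Lemma~\ref{wcsh}, with constants independent of $n$. By Remark~\ref{sch-rem}, every member of $\mathscr{H}(\theta,\lambda_n,\{\psi_r\},\phi,c,A)$ satisfies $\tn{(h)}''_{H_n}$ with $\tau\lambda_n$, $-\tau|\phi|$, $\tau c$, where $\tau=1+\|\theta\|_\infty$. Then (H2) gives $H_n(t,x,0)\le\tau|\phi(t)|$, i.e.\ the required bound with $-\tau|\phi|\in L^1$ in place of $\phi$. Also (H6) and (H3) give $|H_n(t,x,p)-H_n(t,x,q)|\le\tau c(t)(1+|x|)|p-q|$. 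Uniform convergence on compacts is assumed. Proposition~\ref{glscfv} then yields $\liminf_n V_n(t_n,x_n)\ge V(t_0,x_0)$ for every sequence $(t_n,x_n)\to(t_0,x_0)$ in $[0,\infty)\times A$, which is (i).

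\emph{Step 2 (recovery sequence).} By Corollary~\ref{ccsrep-cor} there are representations $(\B,f_n,l_n)$ and $(\B,f,l)$ of $H_n$ and $H$ with the following properties: they satisfy $\tn{(h)}''$ and (LB) with common data $\{20\psi_r\}$, $-\tau'|\phi|$, $\tau'c$, and $f_n\to f$, $l_n\to l$ uniformly on compacts. They are epigraphical representations: this is (A3) for the scaled Hamiltonian $\textit{I\!H}$, and multiplying the $l$-component by $\theta(t)>0$ preserves graph and epigraph inclusions. Because $f_n(t,x,\B)=\D H_n^\ast(t,x,\cdot)$, condition $\tn{(VIC)}_{H_n}$ is exactly (VIC) for $f_n$, and likewise for $f$. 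Hence Proposition~\ref{guscfv} applies. For each $(t_0,x_0)$ it gives $x_{0i}\to x_0$ in $A$ and $n_i\uparrow\infty$ with $\limsup_i\mathcal{V}_{n_i}(t_0,x_{0i})\le\mathcal{V}(t_0,x_0)$. By Theorem~\ref{eqw-fws}, $\mathcal{V}_n=V_n$ and $\mathcal{V}=V$. Taking the sequence $(t_0,x_{0i})$ therefore gives (ii).

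\emph{Main obstacle.} The delicate point is Step 2's use of Proposition~\ref{guscfv}, which internally needs an optimal pair for $\mathcal{V}$ at $(t_0,x_0)$ and finiteness of the value functions. Finiteness follows from Remark~\ref{sb-rem} (using (VIC) and (LB)). Existence of the optimal pair follows from Proposition~\ref{pfv-cvp} combined with Theorem~\ref{eqw-fws}. A second point is that Theorem~\ref{ccsrep-thm} is stated for the unscaled construction of Theorem~\ref{rep-t}. We apply it to $\textit{I\!H}_n\to\textit{I\!H}$, which converge uniformly on compacts because $\theta(t)>0$ is fixed for each $t$. Convergence of $f_n$ and of $\textit{l\!l}_n$ then transfers to $l_n=\theta\,\textit{l\!l}_n$. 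This is precisely what Corollary~\ref{ccsrep-cor} records.
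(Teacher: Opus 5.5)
Your proposal is correct and follows essentially the paper's route. The paper obtains Theorem~\ref{thmsr} directly from Proposition~\ref{glscfv} (for condition (i)) and from Proposition~\ref{guscfv} applied to the representations of Corollary~\ref{ccsrep-cor} (for condition (ii)), and you additionally spell out the steps it leaves implicit: the $n$-independent constants required by Lemma~\ref{wcsh}, that $\tn{(VIC)}_{H_n}$ is (VIC) for $f_n$ because $f_n(t,x,\B)=\D H_n^{\ast}(t,x,\cdot)$, and the identifications $\mathcal{V}_n=V_n$, $\mathcal{V}=V$ via Theorem~\ref{eqw-fws}.
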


Below we provide an example of the Hamiltonians $H_n$ and $H$ satisfying the assumptions of Theorem \ref{thmsr}. In addition, $H_n$ satisfies $\tn{(B)}_{H_n}$ and $\tn{(OPC)}_{H_n}$. Therefore, the value function $V_n$ is the unique solution of \eqref{eqhjb} with $H_n$. However, the limit Hamiltonian $H$ satisfies $\tn{(B)}_{H}$, but does not satisfy $\tn{(OPC)}_{H}$. Therefore, the limit value function $V$ may not be the unique solution of \eqref{eqhjb} with $H$.

\begin{Ex}\label{Ex2}
Let us define the Hamiltonian $H_n:[0,\infty)\times\R\times\R\rightarrow\R$ by the formula:
\begin{equation*}
\textstyle H_n(t,x,p):=\max\big\{\,\alpha(t)\,\max\{p,0\}\,|x|-\alpha(t)\,e^{-\gamma t},0\,\big\}+\frac{1}{n}\max\{p,0\}-\frac{1}{n}\,\beta(t)\,e^{-2\gamma t},
\end{equation*}
where $\alpha(t)\in L^\infty([0,\infty);\R^+)$, $\,\beta(t)\in L^\infty([0,\infty);\R)$, $\gamma\in\R^+$. Let $A=(-\infty,0]$.

\noindent We observe that
\begin{align*}
& H_n^{\ast}(t,x,v)\;=\;\frac{\beta(t)\,e^{-2\gamma t}}{n}+\left\{
\begin{array}{ccl}
+\infty, & \tn{if} & v\notin[0,\,\alpha(t)\,|x|+\!1/n],\;x\not=0,\\[1mm]
\max\left\{\frac{\displaystyle v-1/n}{\displaystyle e^{\gamma t}\,|x|},\,0\right\}\!\!, & \tn{if} & v\in[0,\,\alpha(t)\,|x|+\!1/n],\;x\not=0, \\[3mm]
0, & \tn{if} & v\in[0,1/n],\; x=0,\\
+\infty, & \tn{if} & v\notin[0,1/n],\;x=0,
\end{array}
\right.\\[2mm]
& \D H_n^{\ast}(t,x,\cdot)=[0,\,\alpha(t)\,|x|+\!1/n]\;\,\tn{for all}\;\, t\in[0,\infty),\, x\in\R.
\end{align*}
The Hamiltonian $H_n$ satisfies $\tn{(VIC)}_{H_n}$, $\tn{(B)}_{H_n}$ and $\tn{(OPC)}_{H_n}$. Moreover, $H_n(t,\cdot,\cdot)$ converge uniformly on compacts to $H(t,\cdot,\cdot)$, where $H$ is given by the formula:
\begin{equation*}
 H(t,x,p):=\max\big\{\,\alpha(t)\,\max\{p,0\}\,|x|-\alpha(t)\,e^{-\gamma t},\,0\,\big\}.
\end{equation*}
We observe that
\begin{align*}
& H^{\ast}(t,x,v)\;=\;\left\{
\begin{array}{ccl}
+\infty, & \tn{if} & v\notin[0,\,\alpha(t)\,|x|\,],\;x\not=0,\\[1mm]
\frac{\displaystyle v}{\displaystyle e^{\gamma t}\,|x|}, & \tn{if} & v\in[0,\,\alpha(t)\,|x|\,],\;x\not=0, \\[3mm]
0, & \tn{if} & v=0,\; x=0,\\
+\infty, & \tn{if} & v\neq 0,\;x=0,
\end{array}
\right.\\[2mm]
& \D H^{\ast}(t,x,\cdot)=[0,\,\alpha(t)\,|x|\,]\;\,\tn{for all}\;\, t\in[0,\infty),\, x\in\R.
\end{align*}
The Hamiltonian $H$ satisfies $\tn{(VIC)}_{H}$ and $\tn{(B)}_{H}$, but does not satisfy $\tn{(OPC)}_{H}$. Moreover, $H_n, H\in\mathscr{H}(e^{-\gamma t},\lambda,\{\psi_r\},\phi,c,A)$ with
\begin{align*}
&\lambda(t,x):=\alpha(t)|x|+\alpha(t)+|\,\beta(t)|+1,\;\;\;\;\;c(t):=\alpha(t)+|\,\beta(t)|+1,\\
& \psi_r(t):=e^{-\gamma t}\alpha(t)\,r+e^{-\gamma t}\alpha(t)+e^{-\gamma t}\,|\,\beta(t)|+e^{-\gamma t},\;\;\phi(t):=-e^{-\gamma t}\,|\,\beta(t)|.
\end{align*}
We show that \eqref{sch-def-2} holds in this case. Let $x(\cdot)\in S_{\!\!\textit{I\!H}}(t_0,x_0)$ and $y(\cdot)\in S_{\!\!\textit{I\!H}_n}(t_0,x_0)$. Then $\dot{x}(t)\geq 0$ and $\dot{y}(t)\geq 0$ for a.e. $t\in[t_0,\infty)$. Thus, the functions $x(\cdot)$ and $y(\cdot)$ are non-decreasing. So, for all $x_0\in A$ we get $x_0\leq x(t)\leq 0$ and $x_0\leq y(t)\leq 0$ for all $t\in[t_0,\infty)$. Therefore, for all $x_0\in A\cap r\B$ we have $|x(t)|\leq r$ and $|y(t)|\leq r$ for all $t\in[t_0,\infty)$. Hence,
\begin{align*}
& e^{-\gamma t}\lambda(t,x(t))=e^{-\gamma t}\big(\alpha(t)|x(t)|+\alpha(t)+|\,\beta(t)|+1\big)\leq\psi_r(t),\\
& e^{-\gamma t}\lambda(t,y(t))=e^{-\gamma t}\big(\alpha(t)|y(t)|+\alpha(t)+|\,\beta(t)|+1\big)\leq\psi_r(t).
\end{align*}
\end{Ex}


\end{document}